\documentclass[10pt,leqno,oneside]{amsart}
\usepackage{amsmath,amssymb,amsxtra,comment,graphicx,psfrag}
\usepackage{bm,mathrsfs}
\usepackage{mathtools}
\usepackage{xspace}
\usepackage{color}
\usepackage{enumerate}
\usepackage{pdfsync}
\usepackage{hhline}
\usepackage[nohead,margin=1.1in]{geometry}
\usepackage{diagbox}

\usepackage{fancyhdr}

\allowdisplaybreaks

\setlength\abovedisplayskip{3pt}
\setlength\belowdisplayskip{3pt}

\newcommand{\al}{\alpha}

\def\Dal{{\partial_t^\al}}
\def\bDal{{\bar\partial_\tau^\al}}
\def\d{{\rm d}}

\theoremstyle{plain}
\newtheorem{theorem}{Theorem}[section]
\newtheorem{remark}{Remark}[section]

\newtheorem{lemma}{Lemma}[section]

\newtheorem{corollary}{Corollary}[section]
\numberwithin{equation}{section}


\newcommand{\R}{\mathbb{R}}

\def\d{{\rm d}}

\begin{document}

\title[]{Subdiffusion with a time-dependent coefficient: analysis and numerical solution}

\author[Bangti Jin]{Bangti Jin}
\address{Department of Computer Science, University College London, Gower Street, London, WC1E 2BT, UK.}
\email {b.jin@ucl.ac.uk,bangti.jin@gmail.com}

\author[Buyang Li]{$\,\,$Buyang Li$\,$}
\address{Department of Applied Mathematics,
The Hong Kong Polytechnic University, Kowloon, Hong Kong}
\email {bygli@polyu.edu.hk}

\author[Zhi Zhou]{$\,\,$Zhi Zhou$\,$}
\address{Department of Applied Mathematics,
The Hong Kong Polytechnic University, Kowloon, Hong Kong}
\email {zhizhou@polyu.edu.hk}

\keywords{subdiffusion, time-dependent coefficient, Galerkin finite element method, convolution quadrature, perturbation argument, error estimate}
\subjclass[2010]{Primary: 65M30, 65M15, 65M12.}

\begin{abstract}
In this work, a complete error analysis is presented for fully discrete solutions of the subdiffusion
equation with a time-dependent diffusion coefficient, obtained by the Galerkin finite element method
with conforming piecewise linear finite elements in space
and backward Euler convolution quadrature in time. The regularity of the solutions of the subdiffusion
model is proved for both nonsmooth initial data and incompatible source term. Optimal-order convergence of the
numerical solutions is established using the proven solution regularity and a novel perturbation argument
via freezing the diffusion coefficient at a fixed time. The analysis is supported by numerical experiments.
\end{abstract}

\maketitle

\section{Introduction}\label{sec:intro}
Let $\Omega\subset\mathbb{R}^d $ ($d\ge 1$) be a convex polyhedral domain with a boundary $\partial\Omega$.
Consider the following fractional-order parabolic problem for the function $u(x,t)$:
\begin{align}\label{eqn:pde}
\left\{\begin{aligned}
&\Dal u(x,t) - \nabla\cdot(a(x,t)\nabla u(x,t)) = f(x,t) &&(x,t)\in\Omega\times(0,T], \\
&u(x,t)=0  &&(x,t)\in \partial\Omega\times(0,T], \\
&u(x,0)=u_0(x) &&x\in\Omega,
\end{aligned}
\right.
\end{align}
where $T>0$ is a fixed final time, $f \in L^\infty(0,T;L^2(\Omega))$ and $u_0\in L^2(\Omega)$ are given
source term and initial data, respectively, and $a(x,t)\in\R^{d\times d}$ is a symmetric matrix-valued
diffusion coefficient such that for some constant $\lambda\geq 1$
\begin{align}
  &\lambda^{-1}|\xi|^2\le a(x,t)\xi\cdot\xi \le \lambda|\xi|^2, && \forall\, \xi\in\R^d, \,\, \forall\, (x,t)\in \Omega\times(0,T], \label{Cond-1} \\
  &|\partial_t a(x,t)|+|\nabla_xa(x,t)|+|\nabla_x\partial_t a(x,t)|\le c, &&\forall\, (x,t)\in \Omega\times(0,T]. \label{Cond-2}
\end{align}
The notation $\Dal u(t)$ denotes the Caputo derivative in time of order $\alpha\in(0,1)$, defined
by \cite[p. 70]{KilbasSrivastavaTrujillo:2006}
\begin{align}\label{eqn:RLderive}
   \Dal u(x,t)= \frac{1}{\Gamma(1-\alpha)}\int_0^t(t-s)^{-\alpha}\partial_s u (x,s)\d s .
\end{align}

The literature on the numerical analysis of the subdiffusion problem is vast; see \cite{LinXu:2007,
JinLazarovZhou:SIAM2013,JinLazarovPasciakZhou:2015,KaraaMustapha:2017} for a rather incomplete list and
the overview \cite{JinLazarovZhou:2018review} (and the references therein). The work
\cite{JinLazarovZhou:SIAM2013} analyzed two spatially semidiscrete schemes, i.e., Galerkin finite element
method (FEM) and lumped mass method, and derived nearly optimal order error estimates for the homogeneous
problem. The inhomogeneous case was analyzed in \cite{JinLazarovPasciakZhou:2015}. See
\cite{KaraaMustapha:2017} for a finite volume element discretization, and \cite{Karaa:2017} for a unified
approach. There are a number of fully discrete schemes, e.g., convolution quadrature
\cite{YusteAcedo:2005,JinLazarovZhou:SISC2016}, piecewise polynomial interpolation
\cite{SunWu:2006,LinXu:2007,Alikhanov:2015,GaoSunZhang:2014,YanKhanFord:2018,StynesORiordanGracia:2017}, discontinuous Galerkin method
\cite{McLeanMustapha:2009,McLeanMustapha:2015}; and some of them have an $O(\tau)$ rate for
nonsmooth data, with $\tau$ being time step size. However, all these works analyzed
only the case that the diffusion coefficient $a$ is independent of the time $t$. These works mostly employ
Laplace transform and its discrete analogue for analysis, which are not directly applicable to the case of
a time-dependent coefficient. Recently, Mustapha \cite{Mustapha:2017} analyzed the spatially semidiscrete
Galerkin FEM for \eqref{eqn:pde} using a novel energy argument, and proved optimal-order convergence rates
for both smooth and nonsmooth initial data (with a zero source term) based on certain assumptions on the
regularity of the PDE's solution.

In this article, using a novel perturbation argument, we present a new approach to analyze a fully discrete
scheme for problem \eqref{eqn:pde} based on the Galerkin FEM in space and backward Euler (BE) convolution
quadrature in time, covering initial data and source term simultaneously. The main contributions of this paper
are as follows. First, we give a complete existence, uniqueness and regularity theory for problem \eqref{eqn:pde}
in Theorems \ref{thm:reg-space1}--\ref{thm:reg-time}, which are crucial to the error analysis.
Second, we derive sharp error estimates for the spatially semidiscrete Galerkin FEM. This is achieved by
combining error estimates for a time-independent coefficient and a perturbation argument in time. Third,
we derive nearly sharp error estimates for the fully discrete method. All error estimates are given
directly in terms of the regularity of the initial data and source term, under mild regularity assumptions
on the diffusion coefficient $a(x,t)$ that are weaker than the assumptions in \cite{Mustapha:2017}; see
Remark \ref{rmk:reg} for the precise statement.

There are a few relevant works on standard parabolic problems with a time-dependent coefficient
\cite{LuskinRannacher:1982,Sammon:1983,Savare:1993,LeeLeeSheen:2013}. For example, Luskin and
Rannacher \cite{LuskinRannacher:1982} proved optimal order error estimates for both spatially
semidiscrete and fully discrete problems (by BE method) using a novel energy argument, and Sammon
\cite{Sammon:1983} analyzed fully discrete schemes with linear multistep methods. Our error analysis relies crucially
on a perturbation argument, using basic estimates given in Lemmas \ref{lemma-fem-1} and \ref{lemma-fem-2},
which are of independent interest. Generally, the idea of freezing coefficients and perturbation in
time has been proved very useful in combination with energy estimates \cite{Savare:1993} and $L^p$
estimates \cite{AkrivisLiLubich2017,KunstmannLiLubich2017,LiSun2015-Regularity}. In this work, we
have successfully adapted the idea to the subdiffuion model.

The rest of the paper is organized as follows. In Section \ref{sec:reg}, we discuss temporal
and spatial regularity of the solution for nonsmooth problem data. Then in Section \ref{sec:semi},
we prove optimal-order convergence of the spatially semidiscrete Galerkin FEM for both homogeneous
and inhomogeneous problems. In Section \ref{sec:fully}, we present the error analysis for the fully discrete
FEM and prove first-order convergence in time. Last, in Section \ref{sec:numerics}, we present
numerical examples to support the theoretical analysis. Throughout, the notation $c$, with or without
a subscript, denotes a generic positive constant, which may differ at each occurrence, but is always
independent of the mesh size $h$ and step size $\tau$.

\section{Regularity theory}\label{sec:reg}
In this section we investigate the regularity of the solutions of problem \eqref{eqn:pde}. For any
function $f(x,t)$ defined on $\Omega\times(0,T)$, we denote by $f(t)$ the function $f(\cdot,t)$.
Let $-\Delta:H^1_0(\Omega)\cap H^2(\Omega)\rightarrow L^2(\Omega)$ be the negative Laplacian operator
with a zero Dirichlet boundary condition, and $\{(\lambda_j,\varphi_j)\}$ be its eigenvalues
ordered nondecreasingly (with multiplicity counted) and the corresponding eigenfunctions normalized
in the $L^2(\Omega)$ norm. For any $r\geq0$, we denote the space $\dot H^r(\Omega)=\{v\in L^2(\Omega): (-
\Delta)^\frac{r}{2}v\in L^2(\Omega)\}$, with the norm \cite[Chapter 3]{Thomee:2006}
\begin{equation*}
  \|v\|_{\dot H^r(\Omega)}^2=\sum_{j=1}^\infty\lambda_j^r(v,\varphi_j)^2.
\end{equation*}
Then we have $\dot H^0(\Omega)= L^2(\Omega)$,
$\dot H^1(\Omega)=H_0^1(\Omega)$, and $\dot H^2(\Omega)= H^2(\Omega)\cap H_0^1(\Omega)$.

\subsection{Subdiffusion with a time-independent coefficient}
First we recall basic properties of the subdiffusion model with a time-independent
diffusion coefficient, i.e., $a(x,t)=a(x)$. Accordingly, we denote by $A:H^1_0(\Omega)\cap H^2(\Omega)
\rightarrow L^2(\Omega)$ an elliptic operator, defined by
$$A\phi (x):=-\nabla\cdot(a(x)\nabla \phi(x)) , $$
and consider the problem
\begin{equation}\label{PDE-independent}
\Dal u(t) + Au(t) = f(t)\ \,\,\,t\in(0,T],
\quad \mbox{with }u(0)=u_0.
\end{equation}
This problem has been studied in \cite{Bajlekov:2001,EidelmanKochubei:2004,Luchko:2009,McLean:2010,SakamotoYamamoto:2011,JinLiZhou:nonlinear}.
The following maximal $L^p$-regularity holds \cite{Bajlekov:2001}.
\begin{lemma} \label{Lemma:indp}
If $u_0=0$ and $f\in L^p(0,T;L^2(\Omega))$ with $1<p<\infty$, then problem \eqref{PDE-independent}
has a unique solution $u\in L^p(0,T;\dot H^2(\Omega))$ such that $\Dal u\in L^p(0,T;L^2(\Omega))$ and
\begin{equation*}
\|u\|_{L^p(0,T;\dot H^2(\Omega))}+\|\Dal u\|_{L^p(0,T;L^2(\Omega))}\le c\|f\|_{L^p(0,T;L^2(\Omega))},
\end{equation*}
where the constant $c$ does not depend on $f$ and $T$.
\end{lemma}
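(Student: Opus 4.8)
The plan is to derive an explicit solution representation by Laplace transform and then invoke an operator-valued Fourier multiplier theorem, exploiting the Hilbert-space structure to reduce $R$-boundedness to uniform norm bounds. Since $u_0=0$, taking the Laplace transform in $t$ turns the Caputo derivative into multiplication by $z^\al$, so that $\widehat u(z) = (z^\al + A)^{-1}\widehat f(z)$. Consequently $\widehat{\Dal u}(z) = z^\al(z^\al+A)^{-1}\widehat f(z)$ and $\widehat{Au}(z) = A(z^\al+A)^{-1}\widehat f(z)$, exhibiting both $\Dal u$ and $Au$ as causal convolutions of $f$ against operator-valued kernels obtained by inverting the symbols $m_1(z)=z^\al(z^\al+A)^{-1}$ and $m_2(z)=A(z^\al+A)^{-1}$.

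Next I would establish the requisite symbol bounds. Because $A$ is self-adjoint and positive definite on $L^2(\Omega)$, with spectrum contained in $[c,\infty)$ for some $c>0$, the spectral theorem gives $\|(z^\al+A)^{-1}\|_{L^2\to L^2}=\sup_{\mu\in\sigma(A)}|z^\al+\mu|^{-1}$. For $z=i\eta$ on the imaginary axis, $z^\al$ lies in the sector $\{|\arg w|\le \al\pi/2\}$ with $\al\pi/2<\pi/2$, so $z^\al$ stays at a fixed positive angle from the negative real axis where $-\sigma(A)$ lives; this yields the uniform bounds $\|m_1(i\eta)\|+\|m_2(i\eta)\|\le C_\al$ for all $\eta\ne0$. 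Differentiating the resolvent and applying the same functional calculus gives the Mikhlin-type estimate $\|\eta\,\partial_\eta m_j(i\eta)\|\le C_\al$, $j=1,2$.

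I would then apply the operator-valued Fourier multiplier theorem (Weis, Amann): on the Hilbert space $X=L^2(\Omega)$, $R$-boundedness coincides with uniform norm boundedness, so the two families of estimates imply that $f\mapsto\Dal u$ and $f\mapsto Au$ extend to bounded operators on $L^p(\R;X)$ for every $1<p<\infty$. Finally I would transfer this bound to the finite interval: since the convolution kernels are causal (the inversion contour closes in the region of analyticity of $(z^\al+A)^{-1}$ for $t<0$, forcing the kernels to vanish there), extending $f\in L^p(0,T;L^2)$ by zero and solving on all of $\R$ produces a solution agreeing with $u$ on $(0,T)$; this simultaneously gives uniqueness and renders the constant independent of $T$. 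The bound on $\|Au\|_{L^p(0,T;L^2)}$ then controls $\|u\|_{L^p(0,T;\dot H^2)}$ via elliptic regularity on the convex domain. I expect the main obstacle to be the careful verification of the multiplier hypotheses—in particular the uniform resolvent and derivative bounds near $\eta=0$ and $\eta=\infty$, together with the causality argument underpinning the $T$-independence of the constant—rather than the abstract functional-analytic machinery itself.
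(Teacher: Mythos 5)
The paper offers no proof of this lemma at all: it is quoted directly from Bazhlekova \cite{Bajlekov:2001}, so there is no internal argument to compare against. Your proposal is a correct, self-contained reconstruction of the standard proof behind that citation, and on the Hilbert space $L^2(\Omega)$ every essential step is sound: the symbols $m_1(i\eta)=(i\eta)^\alpha((i\eta)^\alpha+A)^{-1}$ and $m_2(i\eta)=A((i\eta)^\alpha+A)^{-1}$ are uniformly bounded because $(i\eta)^\alpha$ lies in the sector $|\arg w|\le\alpha\pi/2<\pi/2$ while $\sigma(-A)\subset(-\infty,-\lambda_1]$; the identity $\eta\,\partial_\eta m_1(i\eta)=\alpha\, m_1(i\eta)m_2(i\eta)$ (and its analogue for $m_2$) gives the Mikhlin condition for free; the Hilbert-space-valued Mikhlin/Weis theorem then yields $L^p(\R;L^2(\Omega))$-boundedness for all $1<p<\infty$; causality of the kernel gives the restriction to $(0,T)$ with a $T$-independent constant; and $H^2$ elliptic regularity on the convex domain converts $\|Au\|_{L^2(\Omega)}$ into $\|u\|_{\dot H^2(\Omega)}$. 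Two points deserve more care than your sketch gives them, though both are routine. First, the multiplier theorem only bounds the maps $f\mapsto \Dal u$, $f\mapsto Au$ on a dense class of nice $f$ for which the formal Laplace-transform manipulations are legitimate; passing to general $f\in L^p(0,T;L^2(\Omega))$ and verifying that the limit actually solves \eqref{PDE-independent} requires a density argument. Second, uniqueness does not follow from the extension-by-zero construction, which merely exhibits one solution with the stated bound; you need a separate (standard) argument, e.g.\ testing the homogeneous problem $\Dal v+Av=0$, $v(0)=0$, with $v$ and using the positive-definiteness of the fractional-derivative kernel, or Laplace-transform uniqueness. With those two repairs your proof stands; its value relative to the paper is that it is self-contained, at the price of spelling out the operator-valued multiplier machinery that the citation to \cite{Bajlekov:2001} conceals.
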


By means of Laplace transform,
the solution $u(t)$ can be represented by \cite[Section 4]{JinLiZhou:nonlinear}
\begin{align}\label{eqn:Sol-expr-u-const}
u(t)= F(t)u_0 + \int_0^t E(t-s) f(s) \d s ,
\end{align}
where the solution operators $F(t)$ and $E(t)$ are defined by
\begin{align}
&F(t):=\frac{1}{2\pi {\rm i}}\int_{\Gamma_{\theta,\delta }}e^{zt} z^{\alpha-1} (z^\alpha+A )^{-1}\, \d z ,
\label{eqn:F} \\
&E(t):=\frac{1}{2\pi {\rm i}}\int_{\Gamma_{\theta,\delta}}e^{zt}  (z^\alpha+A)^{-1}\, \d z ,
\label{eqn:E}
\end{align}
with integration over a contour $\Gamma_{\theta,\delta}\subset\mathbb{C}$ (oriented with an increasing imaginary part):
\begin{equation*}
  \Gamma_{\theta,\delta}=\left\{z\in \mathbb{C}: |z|=\delta, |\arg z|\le \theta\right\}\cup
  \{z\in \mathbb{C}: z=\rho e^{\pm\mathrm{i}\theta}, \rho\ge \delta\} .
\end{equation*}
Throughout, we fix $\theta \in(\frac{\pi}{2},\pi)$ so that $z^{\al} \in \Sigma_{\al\theta}\subset
\Sigma_{\theta}:=\{0\neq z\in\mathbb{C}: {\rm arg}(z)\leq\theta\},$ for all $z\in\Sigma_{\theta}$.
The next lemma gives smoothing properties of $F(t)$ and $E(t)$, which follow from
the resolvent estimate
\begin{equation} \label{eqn:resol}
  \| (z+A)^{-1} \|\le c_\phi |z|^{-1},  \quad \forall z \in \Sigma_{\phi},
  \,\,\,\forall\,\phi\in(0,\pi) ,
\end{equation}
where $\|\cdot\|$ denotes the operator norm from $L^2(\Omega)$ to $L^2(\Omega)$.
\begin{lemma}\label{lem:smoothing}
The operators $F$ and $E$ defined in \eqref{eqn:F} and \eqref{eqn:E} satisfy the
following properties.
\begin{itemize}
\item[$\rm(i)$] $t^{-\alpha}\|A^{-1}(F(t)-I)\|+\|F(t)-I\|\le c,\quad\forall\,t\in(0,T]$; 
\item[$\rm(ii)$] $t^{1-\alpha}\|E(t)\|+t^{2-\alpha}\|E'(t)\|+t\|A  E(t)\|\le c , \quad\forall\,t\in(0,T]$;
\item[$\rm(iii)$] $t^\alpha \|AF(t)\| + t^{1-\beta\alpha}\|A^{-\beta}F'(t)\|\leq c ,\quad\forall\,t\in(0,T], \beta\in[0,1]$.
\end{itemize}
\end{lemma}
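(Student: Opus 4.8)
The plan is to estimate each operator directly from its contour representation \eqref{eqn:F}--\eqref{eqn:E}, using \eqref{eqn:resol} and the freedom to deform the contour. First I would record the consequences of the resolvent bound that get used repeatedly. Since $z\in\Sigma_\theta$ implies $z^\alpha\in\Sigma_{\alpha\theta}$ with $\alpha\theta<\pi$, applying \eqref{eqn:resol} with some $\phi\in(\alpha\theta,\pi)$ at the point $z^\alpha$ gives $\|(z^\alpha+A)^{-1}\|\le c|z|^{-\alpha}$. Writing $A(z^\alpha+A)^{-1}=I-z^\alpha(z^\alpha+A)^{-1}$ then yields the uniform bound $\|A(z^\alpha+A)^{-1}\|\le c$, and the scalar identity $\lambda^{1-\beta}/|z^\alpha+\lambda|=(\lambda/|z^\alpha+\lambda|)^{1-\beta}(1/|z^\alpha+\lambda|)^{\beta}$, applied through the spectral decomposition of the self-adjoint operator $A$, gives for $\beta\in[0,1]$
\begin{equation*}
  \|A^{1-\beta}(z^\alpha+A)^{-1}\|\le c|z|^{-\alpha\beta}.
\end{equation*}

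Because the resolvent is analytic on $\Sigma_\theta$ and $z^{\alpha-1}$, $z^\alpha$ have no branch cut there, the integrands decay on the rays and I may freely replace the radius $\delta$ by $\delta=1/t$. Splitting $\Gamma_{\theta,1/t}$ into the two rays $z=\rho e^{\pm{\rm i}\theta}$ ($\rho\ge 1/t$) and the arc $|z|=1/t$, using $\cos\theta<0$ and the substitution $\rho=s/t$ on the rays, produces the master estimate
\begin{equation*}
  \int_{\Gamma_{\theta,1/t}}|e^{zt}|\,|z|^{\gamma}\,|\d z|\le c\,t^{-\gamma-1},\qquad \gamma\in\mathbb{R}.
\end{equation*}
With this in hand most of the bounds follow by inserting the right resolvent estimate into the integrand and reading off $\gamma$. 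For (ii), the bounds $\|(z^\alpha+A)^{-1}\|\le c|z|^{-\alpha}$ and $\|A(z^\alpha+A)^{-1}\|\le c$ applied to $E(t)$, to $E'(t)=\frac{1}{2\pi{\rm i}}\int_{\Gamma}z e^{zt}(z^\alpha+A)^{-1}\d z$, and to $AE(t)$ give integrand powers $\gamma=-\alpha,\,1-\alpha,\,0$, hence the rates $t^{\alpha-1}$, $t^{\alpha-2}$, $t^{-1}$. The first half of (iii) is equally direct: $AF(t)=\frac{1}{2\pi{\rm i}}\int_{\Gamma}e^{zt}z^{\alpha-1}A(z^\alpha+A)^{-1}\d z$ has $\gamma=\alpha-1$, giving $t^{-\alpha}$.

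The main obstacle is (i) and the second half of (iii): a naive term-by-term bound of $F(t)$ or $F'(t)$ produces the wrong power of $t$, and the resolution is to subtract off the leading singular part and exploit cancellation. Here I would use the standard Laplace-inversion identities $\frac{1}{2\pi{\rm i}}\int_{\Gamma}e^{zt}z^{-1}\d z=1$ and $\frac{1}{2\pi{\rm i}}\int_{\Gamma}e^{zt}\d z=0$ for $t>0$ (justified by closing the contour to the left, picking up the residue at $z=0$ in the first case and none in the second), together with the algebraic identities
\begin{equation*}
  z^{\alpha-1}(z^\alpha+A)^{-1}-z^{-1}=-z^{-1}A(z^\alpha+A)^{-1},\qquad
  z^{\alpha}(z^\alpha+A)^{-1}=I-A(z^\alpha+A)^{-1}.
\end{equation*}
The first identity rewrites $F(t)-I=-\frac{1}{2\pi{\rm i}}\int_{\Gamma}e^{zt}z^{-1}A(z^\alpha+A)^{-1}\d z$; the master estimate with $\|A(z^\alpha+A)^{-1}\|\le c$ gives $\gamma=-1$ and the bound $\|F(t)-I\|\le c$, while multiplying by $A^{-1}$ and using $\|(z^\alpha+A)^{-1}\|\le c|z|^{-\alpha}$ gives $\gamma=-1-\alpha$ and the bound $\|A^{-1}(F(t)-I)\|\le c\,t^{\alpha}$, which is (i). The second identity, after discarding the $I$-term that integrates to zero, gives $A^{-\beta}F'(t)=-\frac{1}{2\pi{\rm i}}\int_{\Gamma}e^{zt}A^{1-\beta}(z^\alpha+A)^{-1}\d z$; inserting the interpolation bound $\|A^{1-\beta}(z^\alpha+A)^{-1}\|\le c|z|^{-\alpha\beta}$ gives $\gamma=-\alpha\beta$ and the rate $t^{\alpha\beta-1}$, which is exactly the second half of (iii). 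The only remaining points needing care are the justification of the contour deformation, of differentiation under the integral sign (both licensed by absolute convergence from the exponential factor), and of the two scalar Cauchy identities.
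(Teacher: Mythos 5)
Your proposal is correct, and its core technique --- contour representation, the sectorial resolvent bound $\|(z^\alpha+A)^{-1}\|\le c|z|^{-\alpha}$, $\|A(z^\alpha+A)^{-1}\|\le c$, and the algebraic identity $z^\alpha(z^\alpha+A)^{-1}=I-A(z^\alpha+A)^{-1}$ --- is exactly the one the paper uses for part (iii). The differences are organizational rather than substantive, but worth noting. First, the paper does not prove (i) and (ii) at all: it imports them from \cite[Lemma 3.4]{JinLiZhou:nonlinear} and only proves (iii); your write-up is self-contained, and your treatment of (i) via the subtraction identity $z^{\alpha-1}(z^\alpha+A)^{-1}-z^{-1}=-z^{-1}A(z^\alpha+A)^{-1}$ together with $\frac{1}{2\pi{\rm i}}\int_\Gamma e^{zt}z^{-1}\,\d z=1$ is the standard cancellation argument that a naive term-by-term bound cannot replace, so you correctly identified where the difficulty sits. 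Second, for the intermediate exponents $\beta\in(0,1)$ in (iii), the paper establishes only the endpoints $\beta=0$ (bounding $\|F'(t)\|\le ct^{-1}$ directly) and $\beta=1$ (bounding $\|A^{-1}F'(t)\|\le ct^{-1+\alpha}$ after dropping the $I$-term), and then invokes operator interpolation between the two time-dependent bounds; you instead push the interpolation inside the contour integral via the resolvent-level estimate $\|A^{1-\beta}(z^\alpha+A)^{-1}\|\le c|z|^{-\alpha\beta}$, justified by the spectral decomposition of the self-adjoint operator $A$, and then apply your master estimate once with $\gamma=-\alpha\beta$. Both routes are valid here; yours treats all $\beta\in[0,1]$ uniformly in a single computation, while the paper's endpoint-plus-interpolation argument has the advantage of not requiring self-adjointness (it only needs the moment inequality for the sectorial operator), which matters if one wants the lemma for nonsymmetric elliptic operators.
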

\begin{proof}
Parts (i) and (ii) have been shown in \cite[Lemma 3.4]{JinLiZhou:nonlinear}.
By letting $\delta=t^{-1}$ in $\Gamma_{\theta,\delta}$ and $z=s\cos\varphi+{\rm i}s\sin\varphi$, using \eqref{eqn:resol}, we have
(with $|\d z|$ being the arc length of $\Gamma_{\theta,\delta}$)
\begin{align*}
\|AF(t)\|
&=\bigg\|\frac{1}{2\pi {\rm i}}\int_{\Gamma_{\theta,\delta }}e^{zt}z^{\alpha-1} A (z^\alpha+A )^{-1}\, \d z\bigg\| \le c\int_{\Gamma_{\theta,\delta }}
e^{{\rm \Re}(z)t} |z|^{\alpha-1 } \, |\d z| \\
&\le c \int_{\delta}^\infty e^{st\cos\theta} s^{\alpha-1} \d s
+ c\int_{-\theta}^\theta e^{\cos\varphi} \delta^{ \alpha} \d \varphi   \le ct^{- \alpha } .
\end{align*}
Next, direct computation gives $F'(t) = \frac{1}{2\pi\rm i}\int_{\Gamma_{\theta,\delta}}e^{zt}z^\alpha (z^\alpha+A)\,\d z$, and
thus by the estimate \eqref{eqn:resol},
\begin{equation*}
  \|F'(t)\| \leq c\int_{\Gamma_{\theta,\delta }}
e^{{\rm \Re}(z)t} |z|^\alpha |z|^{-\alpha}\, |\d z|  \le ct^{-1},
\end{equation*}
which shows the assertion for $\beta=0$.
Meanwhile, by the identity $z^\alpha (z^\alpha+A)^{-1}=I-A(z^\alpha+A)^{-1}$, we have $F'(t) = \frac{1}{2\pi\rm i}
\int_{\Gamma_{\theta,\delta}}e^{zt}z^\alpha (z^\alpha+A)\,\d z = -\frac{1}{2\pi\rm i}
\int_{\Gamma_{\theta,\delta}}e^{zt}A(z^\alpha+A)\,\d z$, and thus
\begin{equation*}
  \|A^{-1}F'(t)\| \leq c\int_{\Gamma_{\theta,\delta }}
e^{{\rm \Re}(z)t} |z|^{-\alpha}\, |\d z|  \le ct^{-1+\alpha}.
\end{equation*}
This shows the assertion for $\beta=1$. Then the desired bound on $t^{1-\beta\alpha}\|A^{-\beta}F'(t)\|$ in part (iii) follows by interpolation.
\end{proof}

\subsection{Regularity theory for subdiffusion with a time-dependent coefficient}

Now we develop the regularity theory for the case of a time-dependent diffusion coefficient.
The work \cite{Zacher:2013} gave some interior H\"{o}lder estimates for bounded measurable coefficients.
Recently, Kubica et al \cite{KubicaYamamoto:2017}
showed the unique existence by approximating the coefficients by smooth functions,
and derived several regularity estimates.
We shall provide a different approach to derive
regularity estimates in Sobolev spaces, which are essential for the error
analysis in Sections \ref{sec:semi} and \ref{sec:fully}.

We define a time-dependent elliptic operator $A(t): \dot H^2(\Omega)\rightarrow L^2(\Omega)$ by
$$
A(t)\phi=- \nabla\cdot(a(x,t)\nabla \phi),\quad\forall \phi\in \dot H^2(\Omega).
$$
Under condition \eqref{Cond-2}, the following estimate holds:
\begin{equation}\label{eqn:basic-est}
  \|(A(t)-A(s))v\|_{L^2(\Omega)}\leq c|t-s|\|v\|_{H^2(\Omega)}.
\end{equation}

First we give the existence, uniqueness and regularity of solutions to problem \eqref{eqn:pde} with $u_0=0$.
\begin{theorem}\label{thm:reg-space1}
Under conditions \eqref{Cond-1}-\eqref{Cond-2}, with $u_0=0$ and $f\in L^p(0,T;L^2(\Omega))$, $1/\alpha<p<\infty$,
problem \eqref{eqn:pde} has a unique solution $u\in C([0,T];L^2(\Omega))\cap L^p(0,T;\dot H^2(\Omega))$
such that $\Dal u\in L^p(0,T;L^2(\Omega))$.
\end{theorem}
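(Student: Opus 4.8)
The plan is to treat $A(t)$ as a perturbation of a constant-in-time operator obtained by freezing the coefficient at a fixed time, and to recast the problem as a Volterra integral equation solvable via the maximal $L^p$-regularity of Lemma \ref{Lemma:indp}. Fix a reference time $t_*$, set $A_*:=A(t_*)$, and rewrite \eqref{eqn:pde} (with $u_0=0$) as
\begin{equation*}
\Dal u(t)+A_* u(t)=f(t)+(A_*-A(t))u(t),\qquad t\in(0,T].
\end{equation*}
Let $\mathcal{S}_*$ denote the solution operator $g\mapsto\int_0^t E_*(t-s)g(s)\,\d s$ of the frozen problem $\Dal v+A_* v=g$, $v(0)=0$, where $E_*$ is the operator \eqref{eqn:E} associated with $A_*$. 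Then the equation becomes the fixed-point problem
\begin{equation*}
u=\mathcal{S}_* f+\mathcal{K}u,\qquad \mathcal{K}u:=\mathcal{S}_*\big((A_*-A(\cdot))u\big),
\end{equation*}
posed in $X:=L^p(0,T;\dot H^2(\Omega))$. By Lemma \ref{Lemma:indp}, $\mathcal{S}_*$ maps $L^p(0,T;L^2(\Omega))$ boundedly into $X$ with a constant $c_0$ independent of $T$, so that $\mathcal{S}_* f\in X$ and $\mathcal{K}$ is a bounded linear operator on $X$.

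The mechanism that drives the whole argument is that $\mathcal{K}$ contracts on short time intervals. By the basic estimate \eqref{eqn:basic-est}, the multiplication operator $v\mapsto(A_*-A(s))v$ obeys $\|(A_*-A(s))v\|_{L^2(\Omega)}\le c|s-t_*|\,\|v\|_{\dot H^2(\Omega)}$, so on an interval $I$ of length $\delta$ on which the frozen time $t_*$ is an endpoint one gains the small factor $\delta$, namely $\|(A_*-A(\cdot))u\|_{L^p(I;L^2(\Omega))}\le c\,\delta\,\|u\|_{L^p(I;\dot H^2(\Omega))}$. Combined with the $T$-independent bound for $\mathcal{S}_*$, this gives $\|\mathcal{K}\|_{X(I)\to X(I)}\le c_0 c\,\delta$, which is $<\tfrac12$ once $\delta$ is fixed small, uniformly in the location of $I$. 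I stress that controlling the full $\dot H^2$-norm is essential: the pointwise smoothing $\|A_* E_*(\tau)\|\le c\tau^{-1}$ from Lemma \ref{lem:smoothing}(ii) is only borderline nonintegrable, so a naive kernel estimate for $\mathcal{K}$ fails, and it is precisely the maximal $L^p$-regularity of Lemma \ref{Lemma:indp} that absorbs this singularity.

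To globalize, I would subdivide $[0,T]$ into finitely many subintervals $I_k=[t_k,t_{k+1}]$ of length $\delta$ and freeze the coefficient at the left endpoint $t_k$ on each $I_k$. Once $u$ is known on $[0,t_k]$, the Caputo derivative on $I_k$ splits into a fractional derivative local to $I_k$ plus a history contribution $\tfrac{1}{\Gamma(1-\alpha)}\int_0^{t_k}(t-s)^{-\alpha}\partial_s u(s)\,\d s$; since the kernel $(t-s)^{-\alpha}$ is bounded for $s\le t_k<t$, this history term is a known forcing in $L^p(I_k;L^2(\Omega))$, and the problem on $I_k$ is again of frozen type with a modified source. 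The contraction of the previous step then produces a unique solution on each $I_k$, and after $\lceil T/\delta\rceil$ steps one obtains $u\in X$; since $A(t):\dot H^2(\Omega)\to L^2(\Omega)$ is bounded, $\Dal u=f-A(\cdot)u\in L^p(0,T;L^2(\Omega))$. Uniqueness follows from the same contraction applied successively on the $I_k$. I expect this local-to-global passage to be the main obstacle: freezing at a single fixed time does not yield a global contraction when $T$ is large, so one must freeze locally, and the nonlocal Caputo memory then forbids a naive ``restart'' with fresh interior data—one has to carry the history term and the regularity of $u$ at the grid points across subintervals, and verifying that this induction closes (in the presence of the borderline smoothing) is the delicate part.

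Finally, the continuity $u\in C([0,T];L^2(\Omega))$ is where the hypothesis $p>1/\alpha$ enters. With $u_0=0$, inverting the Caputo derivative gives $u={}_0I_t^\alpha(\Dal u)$, the Riemann--Liouville fractional integral of $\Dal u\in L^p(0,T;L^2(\Omega))$. Estimating
\begin{equation*}
\|u(t)\|_{L^2(\Omega)}\le\frac{1}{\Gamma(\alpha)}\int_0^t(t-s)^{\alpha-1}\|\Dal u(s)\|_{L^2(\Omega)}\,\d s
\end{equation*}
by Hölder's inequality, the kernel $(t-s)^{\alpha-1}$ lies in $L^{p'}(0,t)$ exactly when $\alpha p>1$, so the fractional integral maps $L^p(0,T;L^2(\Omega))$ into $C([0,T];L^2(\Omega))$ (indeed into a Hölder space), which yields $u\in C([0,T];L^2(\Omega))$ with $u(0)=0$ and completes the proof.
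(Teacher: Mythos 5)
Your first step is sound: on the initial interval $[0,\delta]$, freezing at $t_*=0$ and combining Lemma \ref{Lemma:indp} (whose constant is indeed independent of the interval length) with \eqref{eqn:basic-est} does give a contraction on $L^p(0,\delta;\dot H^2(\Omega))$, and your closing argument for $u\in C([0,T];L^2(\Omega))$ via the fractional integral and H\"older's inequality is correct. The genuine gap is the local-to-global step, which you yourself flag as ``the delicate part'' but do not resolve --- and it does not close as described. The claim that the history contribution $\frac{1}{\Gamma(1-\alpha)}\int_0^{t_k}(t-s)^{-\alpha}\partial_s u(s)\,\d s$ is ``a known forcing in $L^p(I_k;L^2(\Omega))$'' is unjustified: the induction hypothesis supplies only $u\in L^p(0,t_k;\dot H^2(\Omega))$, $\Dal u\in L^p(0,t_k;L^2(\Omega))$ and $u\in C([0,t_k];L^2(\Omega))$, which gives no control whatsoever of the pointwise derivative $\partial_s u$ (for $f\in L^p$ the solution need not be differentiable in time; even for smooth data $\partial_s u$ blows up like $s^{\alpha-1}$ at $s=0$). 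If instead you avoid $\partial_s u$ by writing the memory in Riemann--Liouville form, the history forcing becomes $\frac{-\alpha}{\Gamma(1-\alpha)}\int_0^{t_k}(t-s)^{-\alpha-1}u(s)\,\d s$, which behaves like $\frac{1}{\alpha}u(t_k)(t-t_k)^{-\alpha}$ as $t\to t_k^+$ (and $u(t_k)\neq 0$ in general, with only $u(t_k)\in L^2(\Omega)$ available); this fails to lie in $L^p(I_k;L^2(\Omega))$ precisely because $\alpha p>1$, i.e.\ the standing hypothesis of the theorem. So the restarted problem on $I_k$ is not of the form ``frozen problem $+$ small perturbation $+$ admissible source,'' and neither existence nor uniqueness on $[\delta,T]$ follows. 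This is not a removable technicality: the fractional evolution has no semigroup property, so the interval-by-interval restart that works for classical parabolic equations breaks down here.

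The paper avoids localization in time altogether, and it is instructive to compare. It runs a method of continuity in a homotopy parameter $\theta$, deforming $A(\theta t)$ from the autonomous operator $A(0)$ (at $\theta=0$) to the full operator $A(t)$ (at $\theta=1$): openness of the solvable set is obtained by a Neumann-series perturbation in $\theta$ --- where the perturbation $A(\theta t)-A(\theta_0 t)$ is small \emph{uniformly on all of} $[0,T]$, of order $|\theta-\theta_0|$ --- and closedness comes from a global a priori estimate. That estimate is where your smallness idea appears, but in a different guise: the coefficient is frozen at the \emph{right} endpoint $t_0$ of the interval $(0,t_0)$, so the perturbation carries the factor $(t_0-t)$, which is not uniformly small; smallness is then extracted through the integration-by-parts/Gronwall trick applied to $g(t)=\|u\|^p_{L^p(0,t;H^2(\Omega))}$, valid for every $t_0\in(0,T]$ simultaneously. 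If you want to repair your proof, replace the interval-by-interval contraction by this right-endpoint freezing plus Gronwall argument; your contraction mechanism can then be retained, but as a perturbation in the homotopy parameter rather than in time.
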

\begin{proof}
For any $\theta\in[0,1]$, consider the following fractional-order parabolic problem
\begin{align}\label{PDE-theta}
\begin{aligned}
&\Dal u(t) + A(\theta t)u(t)
= f(t) ,\quad t\in(0,T],\quad \mbox{with }
u(0)=0,
\end{aligned} 
\end{align}
and define a set
$$D=\{\theta\in[0,1]:\mbox{ \eqref{PDE-theta} has a solution
$u\in L^p(0,T;\dot  H^2(\Omega))$ such that $\Dal u\in L^p(0,T;L^2(\Omega))$} \}.$$
Lemma \ref{Lemma:indp} implies $0\in D$ and so $D\neq\emptyset$.

For any $\theta\in D$, by rewriting \eqref{PDE-theta} as
\begin{align}\label{PDE-theta1}
\begin{aligned}
&\Dal u(t) + A(\theta t_0)u(t) = f(t) + (A(\theta t_0)-A(\theta t))u(t) ,\quad t\in(0,T],\quad \mbox{with }
u(0)=0 ,
\end{aligned}
\end{align}
and by applying Lemma \ref{Lemma:indp} in the time interval $(0,t_0)$, we obtain
\begin{align}\label{maxLp-1}
&\|\Dal u\|_{L^p(0,t_0;L^2(\Omega))}+\| u\|_{L^p(0,t_0;H^2(\Omega))} \nonumber \\
\le&c\|f\|_{L^p(0,t_0;L^2(\Omega))}
+ c\|(A(\theta t_0)-A(\theta t))u(t)\|_{L^p(0,t_0;L^2(\Omega))} \nonumber \\
\le&c\|f\|_{L^p(0,t_0;L^2(\Omega))}
+ c\|(t_0-t)u(t)\|_{L^p(0,t_0;H^2(\Omega))} ,
\end{align}
where the last line follows from \eqref{eqn:basic-est}. Let $g(t)=\| u\|_{L^p(0,t;H^2(\Omega))}^p $, which satisfies
$g'(t) = \| u(t)\|_{H^2(\Omega)}^p.$ Then \eqref{maxLp-1} and integration by parts imply
\begin{align*}
g(t_0)&\le c\|f\|_{L^p(0,t_0;L^2(\Omega))}^p    + c\int_0^{t_0}(t_0-t)^p g'(t)\d t \nonumber \\
&= c\|f\|_{L^p(0,t_0;L^2(\Omega))}^p    + cp\int_0^{t_0}(t_0-t)^{p-1}g(t)\d t\nonumber\\
&\le c\|f\|_{L^p(0,t_0;L^2(\Omega))}^p + c\int_0^{t_0}g(t)\d t ,
\end{align*}
which implies (via the standard Gronwall's inequality)
\begin{align*}
g(t_0)\le c\|f\|_{L^p(0,t_0;L^2(\Omega))}^p,\quad \mbox{i.e., } \| u\|_{L^p(0,t_0;H^2(\Omega))} \leq c\|f\|_{L^p(0,t_0;L^2(\Omega))}.
\end{align*}
Substituting the last inequality into \eqref{maxLp-1} yields
\begin{align}\label{maxLp-2}
&\|\Dal u\|_{L^p(0,t_0;L^2(\Omega))}+\| u\|_{L^p(0,t_0;H^2(\Omega))} \le
c\|f\|_{L^p(0,t_0;L^2(\Omega))} .
\end{align}
Since the estimate \eqref{maxLp-2} is independent of $\theta\in D$,
 $D$ is a closed subset of $[0,1]$.

Now we show that $D$ is also open with respect to
the subset topology of $[0,1]$.  In fact, if $\theta_0\in D$, then problem \eqref{PDE-theta} can be rewritten as
\begin{align}\label{PDE-theta3}
\begin{aligned}
&\Dal u(t) + A(\theta_0 t)u(t) +(A(\theta t)-A(\theta_0 t))u(t)
= f(t) ,\quad t\in(0,T],\quad \mbox{with }
u(0)=0 ,
\end{aligned}
\end{align}
which is equivalent to
\begin{align}\label{PDE-theta4}
&\Big[1 + (\Dal + A(\theta_0 t))^{-1} (A(\theta t)-A(\theta_0 t)) \Big]u(t)
= (\Dal + A(\theta_0 t))^{-1} f(t).
\end{align}
It follows from \eqref{maxLp-2} that the operator $ (\Dal + A(\theta_0 t))^{-1} (A(\theta t)-A(\theta_0 t))$ is small in the sense that
$$
\| (\Dal + A(\theta_0 t))^{-1} (A(\theta t)-A(\theta_0 t))\|_{L^p(0,T;H^2(\Omega))\rightarrow L^p(0,T;H^2(\Omega))}
\le c|\theta-\theta_0|.
$$
Thus for $\theta$ sufficiently close to $\theta_0$, the operator $1 + (\Dal + A(\theta_0 t))^{-1} (A(\theta t)-
A(\theta_0 t)) $ is invertible on $L^p(0,T;\dot H^2(\Omega))$, which implies $\theta\in D$. Thus $D$
is open with respect to the subset topology of $[0,1]$.
Since $D$ is both closed and open respect to the subset topology of $[0,1]$, $D=[0,1]$.
Further, note that for $1/\alpha<p<\infty$, the inequality \eqref{maxLp-2} and the condition $u(0)=0$
directly imply $u\in C([0,T];L^2(\Omega))$ \cite[Theorem
2.1]{JinLiZhou:nonlinear}, which completes the proof of the theorem.
\end{proof}

The following generalized Gronwall's inequality is useful (\cite[Lemma 6.3]{ElliottLarsson:1992}
and \cite[Exercise 3, p. 190]{Henry:1981}).
\begin{lemma}\label{lem:gronwall}
Let the function $\varphi(t)\geq0$ be continuous for $0< t\leq T$. If
\begin{equation*}
  \varphi(t)\leq at^{-1+\alpha} + b\int_0^t (t-s)^{-1+\beta}\varphi(s)\d s,\quad 0<t\leq T,
\end{equation*}
for some constants $a,b\geq0$, $\alpha,\beta>0$, then there is a constant $c=c(b,T,\alpha,\beta)$ such that
\begin{equation*}
  \varphi(t)\leq cat^{-1+\alpha},\quad 0<t\leq T.
\end{equation*}
\end{lemma}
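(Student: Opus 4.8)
The plan is to prove the estimate by Picard-type iteration of the integral inequality, exploiting that the singular kernel is a convolution kernel so that its iterated powers can be computed explicitly through the Beta integral. I introduce the positivity-preserving linear operator
\begin{equation*}
(B\psi)(t):=b\int_0^t(t-s)^{-1+\beta}\psi(s)\,\d s,
\end{equation*}
and write $\omega(t):=t^{-1+\alpha}$, so that the hypothesis reads $\varphi\le a\omega+B\varphi$ pointwise on $(0,T]$. Since $B$ maps nonnegative functions to nonnegative functions and is monotone, substituting the inequality into itself $n$ times yields
\begin{equation*}
\varphi(t)\le a\sum_{k=0}^{n-1}(B^k\omega)(t)+(B^n\varphi)(t),\qquad 0<t\le T.
\end{equation*}
The proof then reduces to (i) summing the series $\sum_k B^k\omega$ and (ii) showing that the remainder $B^n\varphi\to0$ as $n\to\infty$.

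For step (i), I would compute $B^k\omega$ explicitly. Using the Beta integral $\int_0^t(t-s)^{-1+\beta}s^{-1+\gamma}\,\d s=\frac{\Gamma(\beta)\Gamma(\gamma)}{\Gamma(\beta+\gamma)}t^{-1+\beta+\gamma}$ and induction on $k$, the telescoping of the Gamma factors gives
\begin{equation*}
(B^k\omega)(t)=\frac{\bigl(b\,\Gamma(\beta)\bigr)^k\Gamma(\alpha)}{\Gamma(\alpha+k\beta)}\,t^{-1+\alpha+k\beta}.
\end{equation*}
Summing over $k$ and bounding $t^{k\beta}\le T^{k\beta}$ on $(0,T]$ then yields
\begin{equation*}
a\sum_{k=0}^{\infty}(B^k\omega)(t)\le a\,t^{-1+\alpha}\,\Gamma(\alpha)\sum_{k=0}^{\infty}\frac{\bigl(b\,\Gamma(\beta)\,T^{\beta}\bigr)^k}{\Gamma(\alpha+k\beta)},
\end{equation*}
and the last series is a value of the (entire) two-parameter Mittag-Leffler function $E_{\beta,\alpha}$, hence finite; this furnishes the constant $c=c(b,T,\alpha,\beta)$.

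For step (ii), since $B$ is convolution against $b\,s^{-1+\beta}$, the kernel of $B^n$ is the $n$-fold convolution power $(b\Gamma(\beta))^n s^{-1+n\beta}/\Gamma(n\beta)$, so that, once $n\beta\ge1$,
\begin{equation*}
(B^n\varphi)(t)=\frac{(b\Gamma(\beta))^n}{\Gamma(n\beta)}\int_0^t(t-s)^{-1+n\beta}\varphi(s)\,\d s\le\frac{(b\Gamma(\beta))^n\,T^{-1+n\beta}}{\Gamma(n\beta)}\int_0^T\varphi(s)\,\d s .
\end{equation*}
I expect the main obstacle to be precisely this remainder: one must first argue that $\varphi\in L^1(0,T)$, which follows because the hypothesis is only meaningful when its right-hand side is finite — finiteness of the convolution near $s=0$, where the kernel is bounded, forces $\varphi$ to be integrable near $t=0$, while continuity handles the rest of $(0,T]$. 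One then observes that $\Gamma(n\beta)$ grows faster than any geometric sequence, so the prefactor above tends to $0$ and $(B^n\varphi)(t)\to0$. Passing to the limit $n\to\infty$ in the iteration inequality then gives $\varphi(t)\le c\,a\,t^{-1+\alpha}$, as claimed.
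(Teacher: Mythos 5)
Your proof is correct, and it coincides with the proof the paper implicitly relies on: the paper does not prove Lemma \ref{lem:gronwall} itself but quotes it from \cite[Lemma 6.3]{ElliottLarsson:1992} and \cite[Exercise 3, p. 190]{Henry:1981}, and your Picard-iteration argument — iterated kernels computed via the Beta integral, summation into the Mittag-Leffler series $E_{\beta,\alpha}\bigl(b\Gamma(\beta)T^\beta\bigr)$, and the remainder $B^n\varphi$ killed by the super-geometric growth of $\Gamma(n\beta)$ — is exactly the standard proof given in those references. Your observation that $\varphi\in L^1(0,T)$ must first be extracted from the hypothesis (finiteness of the right-hand side plus the kernel being bounded below near $s=0$) is the right way to supply the local-integrability assumption under which the cited results are stated, and it is needed: without it the inequality can hold vacuously (right-hand side $=+\infty$) for functions violating the conclusion.
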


Next we give the spatial regularity of the solution $u$ for the case $f=0$.
\begin{theorem}\label{thm:reg-space2}
Under conditions \eqref{Cond-1}-\eqref{Cond-2}, with $u_0\in \dot H^\beta(\Omega)$, $0\leq \beta\le 2$, and $f=0$, problem \eqref{eqn:pde}
has a unique solution $u\in C([0,T];L^2(\Omega))\cap C((0,T];\dot H^2(\Omega))$ such that $\Dal u\in C((0,T];L^2(\Omega))$, and
\begin{align*}
 \|u(t)\|_{H^2(\Omega)}\le ct^{-(1-\beta/2)\alpha} \|u_0\|_{\dot H^\beta(\Omega)}.
\end{align*}
\end{theorem}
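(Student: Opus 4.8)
The plan is to reduce everything to the time-independent theory of Lemmas \ref{Lemma:indp}--\ref{lem:smoothing} by a freezing-coefficient argument, the decisive point being that the coefficient is frozen at the \emph{current} time $t$ rather than at $t=0$. Throughout, denote by $\mathcal F_s(\cdot)$ and $\mathcal E_s(\cdot)$ the operators \eqref{eqn:F}--\eqref{eqn:E} with $A$ replaced by the frozen operator $A(s)$. For existence and uniqueness I would first split off the singular part: writing $u=\mathcal F_0(t)u_0+w$, the remainder $w$ solves \eqref{eqn:pde} with $u_0=0$ and source $\tilde f(t):=(A(0)-A(t))\mathcal F_0(t)u_0$. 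By \eqref{eqn:basic-est} together with the smoothing bound for $\mathcal F_0$ below, $\|\tilde f(t)\|_{L^2}\le ct\,\|\mathcal F_0(t)u_0\|_{H^2}\le ct^{\,1-(1-\beta/2)\alpha}\|u_0\|_{\dot H^\beta}$, which is bounded on $(0,T]$ because $(1-\beta/2)\alpha<1$; hence $\tilde f\in L^\infty(0,T;L^2)\subset L^p(0,T;L^2)$ for all $p$, and Theorem \ref{thm:reg-space1} produces a unique $w$ with $w\in C([0,T];L^2)$ and $w(s)\in\dot H^2$ for a.e.\ $s$. This yields $u\in C([0,T];L^2)$, locates $u(s)\in\dot H^2$ for a.e.\ $s$ (all that is needed to legitimize the representation below), and gives uniqueness, since the difference of two solutions solves the $u_0=0$, $f=0$ problem.

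For the pointwise estimate I would fix $t\in(0,T]$ and freeze at $t$: rewriting \eqref{eqn:pde} as $\Dal u(s)+A(t)u(s)=(A(t)-A(s))u(s)$ for $s\in(0,t]$ and invoking the Duhamel formula \eqref{eqn:Sol-expr-u-const} for the constant operator $A(t)$ gives
\[
u(t)=\mathcal F_t(t)u_0+\int_0^t \mathcal E_t(t-s)\big(A(t)-A(s)\big)u(s)\,\d s.
\]
The point of freezing at $t$ rather than at $0$ is that the perturbation $A(t)-A(s)$ now carries the factor $t-s$, which vanishes on the diagonal and will exactly cancel the temporal singularity of $\mathcal E_t$.

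Next I would estimate the two terms using smoothing bounds that are uniform in the freezing time; this uniformity is inherited from \eqref{eqn:resol}, which holds for every $A(s)$ with a constant depending only on the ellipticity constant $\lambda$ in \eqref{Cond-1}. For the first term, interpolating the endpoint bounds $\|\mathcal F_t(t)\|_{L^2\to\dot H^2}\le ct^{-\alpha}$ (Lemma \ref{lem:smoothing}(iii)) and $\|\mathcal F_t(t)\|_{\dot H^2\to\dot H^2}\le c$ (Lemma \ref{lem:smoothing}(i), using that $\mathcal F_t$ commutes with $A(t)$) gives $\|\mathcal F_t(t)u_0\|_{H^2}\le ct^{-(1-\beta/2)\alpha}\|u_0\|_{\dot H^\beta}$. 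For the integral, Lemma \ref{lem:smoothing}(ii) yields $\|A(t)\mathcal E_t(t-s)\|\le c(t-s)^{-1}$, while \eqref{eqn:basic-est} yields $\|(A(t)-A(s))u(s)\|_{L^2}\le c(t-s)\|u(s)\|_{H^2}$; since $\|\cdot\|_{H^2}\simeq\|A(t)\cdot\|_{L^2}$ uniformly in $t$, the singular and vanishing factors cancel and the kernel collapses to a constant:
\[
\|u(t)\|_{H^2}\le ct^{-(1-\beta/2)\alpha}\|u_0\|_{\dot H^\beta}+c\int_0^t\|u(s)\|_{H^2}\,\d s.
\]
Applying the generalized Gronwall inequality (Lemma \ref{lem:gronwall}, with its $\alpha$ replaced by $1-(1-\beta/2)\alpha>0$ and its $\beta$ by $1$) gives the asserted estimate. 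The remaining claims $\Dal u=-A(t)u\in C((0,T];L^2)$ and $u\in C((0,T];\dot H^2)$ then follow from the equation and the continuity of the representation.

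The main obstacle is precisely the near-diagonal singularity of the perturbation integral: freezing naively at $t=0$ produces the kernel $(t-s)^{-1}$ paired with a nonvanishing factor $A(0)-A(s)$, which is not integrable and defeats Gronwall. The crucial design choice is therefore to freeze at the current time so that \eqref{eqn:basic-est} supplies the compensating factor $t-s$ that renders the kernel integrable. The remaining delicate point is to confirm that all smoothing constants are genuinely uniform over the family $\{A(s)\}_{s\in[0,T]}$ (so the final constant does not degenerate), together with the elliptic-regularity equivalence $\|\cdot\|_{H^2}\simeq\|A(t)\cdot\|_{L^2}$; both hold under \eqref{Cond-1}--\eqref{Cond-2}.
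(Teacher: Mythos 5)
Your proof is correct, and its core --- freezing the coefficient at the \emph{current} time $t$, writing the Duhamel representation with $\mathcal F_t,\mathcal E_t$, pairing the $O((t-s)^{-1})$ smoothing of $A(t)\mathcal E_t(t-s)$ against the $O(t-s)$ factor supplied by \eqref{eqn:basic-est}, and closing with the generalized Gronwall inequality of Lemma \ref{lem:gronwall} --- is exactly the paper's argument (the paper writes $t_0$ for your $t$ and evaluates its representation \eqref{eqn:Sol-expr-u} at $t=t_0$). Where you genuinely depart from the paper is in existence and uniqueness: the paper reruns the continuity-method proof of Theorem \ref{thm:reg-space1} (deforming the coefficient, with the new a priori estimate as the uniform bound), whereas you invoke Theorem \ref{thm:reg-space1} as a black box by splitting $u=\mathcal F_0(t)u_0+w$ and checking that the induced source $\tilde f(t)=(A(0)-A(t))\mathcal F_0(t)u_0$ lies in $L^\infty(0,T;L^2(\Omega))$ because the exponent $1-(1-\beta/2)\alpha$ is positive. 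This reduction is arguably cleaner: it avoids repeating the open/closed deformation argument, and it yields $u\in C([0,T];L^2(\Omega))$ immediately from $w\in C([0,T];L^2(\Omega))$ together with continuity of $\mathcal F_0(\cdot)u_0$, whereas the paper has to verify $\lim_{t\to0^+}u(t)=u_0$ separately from the representation with $t_0=0$. The small price is that your solution enters the Gronwall step knowing only $u\in L^p(0,T;\dot H^2(\Omega))$, so $\|u(s)\|_{H^2(\Omega)}$ is defined merely a.e.\ rather than continuously on $(0,T]$; strictly speaking one should then use an integrable-kernel version of Lemma \ref{lem:gronwall} or bootstrap continuity afterwards, but the paper's own proof carries exactly the same informality, so this is not a gap relative to it.
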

\begin{proof}
The existence and uniqueness of a solution can be proved in the same way as Theorem \ref{thm:reg-space1} based on
the a priori estimate below. We rewrite problem \eqref{eqn:pde} as
\begin{align*}
\begin{aligned}
&\Dal u(t) + A(t_0)u(t)
=  (A(t_0)-A(t))u(t) + f(t) ,\quad t\in(0,T],\quad\mbox{with }
u(0)=u_0 ,
\end{aligned}
\end{align*}
Then the solution $u(t)$ can be represented by
\begin{align}\label{eqn:Sol-expr-u}
u(t)
&=F(t;t_0)u_0
+ \int_0^t E(t-s; t_0) (A(t_0)-A(s))u(s) \d s+ \int_0^t E(t-s; t_0) f(s) \d s,
\end{align}
where the operators $F(t;t_0)$ and $E(t;t_0)$ are defined respectively by
\begin{align*}
F(t;t_0):=\frac{1}{2\pi {\rm i}}\int_{\Gamma_{\theta,\delta }}e^{zt} z^{\alpha-1} (z^\alpha+A(t_0) )^{-1}\, \d z
\quad \mbox{and}\quad E(t;t_0):=\frac{1}{2\pi {\rm i}}\int_{\Gamma_{\theta,\delta}}e^{zt}  (z^\alpha+A(t_0) )^{-1}\, \d z .
\end{align*}
In the case $f=0$, applying $A(t_0)$ to both sides of \eqref{eqn:Sol-expr-u} yields
\begin{align*}
A(t_0)u(t)=& A(t_0)F(t; t_0)u_0  + \int_0^t A(t_0)E(t-s; t_0)(A(t_0)-A(s))u(s) \d s.
\end{align*}
Then conditions \eqref{Cond-1}-\eqref{Cond-2} and Lemma \ref{lem:smoothing}(ii) imply
\begin{align*}
\|u(t_0)\|_{H^2(\Omega)}&\le c\|A(t_0)F(t_0; t_0)u_0 \|_{L^2(\Omega)}
+ c\int_0^{t_0} \|A(t_0)E(t_0-s; t_0)\|\|(A(t_0)-A(s))u(s)\|_{L^2(\Omega)} \d s\\
&\le ct_0^{-(1-\beta/2)\alpha}\|u_0 \|_{\dot H^\beta(\Omega)}
+ c\int_0^{t_0} (t_0-s)^{-1}(t_0-s)\|u(s)\|_{H^2(\Omega)} \d s \\
&= ct_0^{-(1-\beta/2)\alpha}\|u_0 \|_{\dot H^\beta(\Omega)}
+ c\int_0^{t_0} \|u(s)\|_{H^2(\Omega)} \d s ,
\quad\forall \, t_0\in(0,T].
\end{align*}
The desired estimate follows from the generalized Gronwall's inequality in Lemma \ref{lem:gronwall}.
It remains to show $ u\in C([0,T];L^2(\Omega))\cap C((0,T];H^2(\Omega))$. Indeed, note 
that (by fixing $t_0=0$)
\begin{equation*}
  \|u(t)-u_0\|_{L^2(\Omega)} \leq \|F(t;0)u_0-u_0\|_{L^2(\Omega)} + \int_0^t(t-s)^{-\alpha}s\|u(s)\|_{H^2(\Omega)}\d s,
\end{equation*}
which together with the bound on $\|u(s)\|_{H^2(\Omega)}$ implies
\begin{align*}
  & \lim_{t\to0^+}\|u(t)-u_0\|_{L^2(\Omega)}\\
  \leq & \lim_{t\to0^+}\|F(t;0)u_0-u_0\|_{L^2(\Omega)} + \lim_{t\to0^+}c\int_0^t(t-s)^{\alpha-1}s^{1-(1-\beta/2)\alpha}\d s\|u_0\|_{\dot H^\beta(\Omega)}=0.
\end{align*}
i.e., $\lim_{t\to0^+}u(t)=u_0$ in $L^2(\Omega)$. The rest of the assertion follows similarly.
This completes the proof.
\end{proof}

To analyze the temporal regularity, we first give three technical lemmas.
\begin{lemma}\label{lem:time-t}
Let conditions \eqref{Cond-1} and \eqref{Cond-2} be fulfilled, and $u$ be the solution to problem
\eqref{eqn:pde} with $u_0\in L^2(\Omega)$ and $f=0$. Then there holds
\begin{equation*}
  \|\frac{\d}{\d t}\int_0^t E(t-s; t_0)(A(t_0)-A(s))u(s) \d s\big|_{t=t_0}\|_{L^2(\Omega)}\leq c\|u_0\|_{L^2(\Omega)}.
\end{equation*}
\end{lemma}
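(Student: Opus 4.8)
The plan is to reduce the statement to a bound on one explicit integral. Throughout write $h(s):=(A(t_0)-A(s))u(s)$ and $w(t):=\int_0^t E(t-s;t_0)h(s)\,\d s$, so that the quantity to estimate is $\|w'(t_0)\|_{L^2(\Omega)}$. Two ingredients are available at no cost. First, the perturbation bound \eqref{eqn:basic-est} together with the spatial regularity of Theorem \ref{thm:reg-space2} (with $\beta=0$) gives $\|h(s)\|_{L^2(\Omega)}\le c|t_0-s|\,\|u(s)\|_{H^2(\Omega)}\le c|t_0-s|\,s^{-\alpha}\|u_0\|_{L^2(\Omega)}$. Second, Lemma \ref{lem:smoothing}(ii) applied to the frozen operator $A(t_0)$, whose sectorial bound \eqref{eqn:resol} is uniform in $t_0$ under \eqref{Cond-1}--\eqref{Cond-2}, gives $\|E(\tau;t_0)\|\le c\tau^{\alpha-1}$ and $\|E'(\tau;t_0)\|\le c\tau^{\alpha-2}$ with $c$ independent of $t_0$.

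The crucial structural fact is that $h(t_0)=(A(t_0)-A(t_0))u(t_0)=0$. I would exploit this by computing $w'(t_0)$ straight from the difference quotient rather than differentiating under the integral sign. Taking $t>t_0$ (the case $t<t_0$ is symmetric), split
\[
w(t)-w(t_0)=\int_{t_0}^t E(t-s;t_0)h(s)\,\d s+\int_0^{t_0}\big[E(t-s;t_0)-E(t_0-s;t_0)\big]h(s)\,\d s.
\]
For the first (diagonal) integral, the estimates above give $\|E(t-s;t_0)h(s)\|_{L^2(\Omega)}\le c(t-s)^{\alpha-1}(s-t_0)\,s^{-\alpha}\|u_0\|_{L^2(\Omega)}$ on $[t_0,t]$, and the substitution $s=t_0+r(t-t_0)$ shows its norm is $O((t-t_0)^{1+\alpha})$; hence it contributes nothing to $w'(t_0)$. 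This is precisely where $h(t_0)=0$ removes the singular boundary term that a naive differentiation would produce.

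For the second integral I would pass to the limit $t\to t_0^+$ by dominated convergence. Writing $E(t-s;t_0)-E(t_0-s;t_0)=\int_{t_0-s}^{t-s}E'(\tau;t_0)\,\d\tau$ and using the monotonicity of $\tau\mapsto\tau^{\alpha-2}$ yields the $t$-uniform domination $\big\|(t-t_0)^{-1}\big[E(t-s;t_0)-E(t_0-s;t_0)\big]h(s)\big\|_{L^2(\Omega)}\le c(t_0-s)^{\alpha-1}s^{-\alpha}\|u_0\|_{L^2(\Omega)}$, which is integrable on $(0,t_0)$. Therefore
\[
w'(t_0)=\int_0^{t_0}E'(t_0-s;t_0)(A(t_0)-A(s))u(s)\,\d s,
\]
obtained with no recourse to any temporal derivative of $u$. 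Finally I would estimate
\[
\|w'(t_0)\|_{L^2(\Omega)}\le c\int_0^{t_0}(t_0-s)^{\alpha-2}(t_0-s)\,s^{-\alpha}\,\d s\,\|u_0\|_{L^2(\Omega)}=c\int_0^{t_0}(t_0-s)^{\alpha-1}s^{-\alpha}\,\d s\,\|u_0\|_{L^2(\Omega)},
\]
where the substitution $s=t_0\sigma$ makes the remaining integral a constant independent of $t_0$, giving the claim.

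The main obstacle is exactly the justification of the differentiation. Since Lemma \ref{lem:time-t} is a stepping stone toward the temporal regularity of $u$, no bound on $u'$ (equivalently, on $h'$) may be assumed, so the standard route of differentiating the convolution and invoking its $C^1$ dependence on $s$ is unavailable. The resolution is the cancellation engineered by $h(t_0)=0$, which lets the difference-quotient/dominated-convergence argument take the place of differentiation under the integral; the one point requiring genuine care is verifying the $t$-uniform integrable domination of the $E$-difference quotient, and checking that the symmetric case $t<t_0$ goes through identically.
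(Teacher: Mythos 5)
Your proof is correct and takes essentially the same route as the paper: the paper likewise evaluates the derivative as a limit of difference quotients, splits off the same diagonal term over $[t_0,t_0+\varepsilon]$ (killed by the factor $|t_0-s|\le\varepsilon$ coming from \eqref{eqn:basic-est} and Theorem \ref{thm:reg-space2} --- your $h(t_0)=0$ cancellation), writes the remaining difference quotient of $E$ via $E'$ (Lemma \ref{lem:smoothing}(ii)), and ends with the same bound $c\int_0^{t_0}(t_0-s)^{\alpha-1}s^{-\alpha}\,\d s\,\|u_0\|_{L^2(\Omega)}\le c\|u_0\|_{L^2(\Omega)}$. The only differences are cosmetic: you invoke dominated convergence to identify the limit explicitly while the paper just bounds the quotients $\Lambda(\varepsilon)$ uniformly, and for the left-sided case $t<t_0$ your monotonicity domination is no longer $t$-uniform, so that case is ``similar'' (needing the small extra manipulation $(t_0-s)\le(t-s)+(t_0-t)$) rather than literally identical --- the same terseness the paper itself allows with its ${\rm I}_-$, ${\rm II}_-$ terms.
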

\begin{proof}
Let ${\rm I}=\frac{\d}{\d t}\int_0^t E(t-s; t_0)(A(t_0)-A(s))u(s) \d s|_{t=t_0}$. Then
\begin{align}\label{eqn:ut-split}
{\rm I}&= \lim_{\varepsilon\rightarrow 0}\frac{1}{\varepsilon}
\bigg(\int_0^{t_0+\varepsilon} E(t_0+\varepsilon-s; t_0)(A(t_0)-A(s))u(s) \d s \nonumber \\
&\qquad\qquad\qquad
-\int_0^{t_0} E(t_0-s; t_0)(A(t_0)-A(s))u(s) \d s\bigg) =: \lim_{\varepsilon\rightarrow 0} \Lambda(\varepsilon).
\end{align}
If $\varepsilon>0$, then
\begin{align}\label{I-II+}
\Lambda(\varepsilon)&=\frac{1}{\varepsilon}
\int_{t_0}^{t_0+\varepsilon} E(t_0+\varepsilon-s; t_0)(A(t_0)-A(s))u(s) \d s \nonumber \\
&\quad
+\int_0^{t_0} \frac{E(t_0+\varepsilon-s; t_0)-E(t_0-s; t_0)}{\varepsilon}(A(t_0)-A(s))u(s) \d s  =: {\rm I}_++ {\rm II}_+.
\end{align}
By applying Lemma \ref{lem:smoothing}(ii), \eqref{eqn:basic-est} and Theorem \ref{thm:reg-space2}, we deduce
\begin{align*}
\|{\rm I}_+\|_{L^2(\Omega)}&\le c\varepsilon^{-1}\int_{t_0}^{t_0+\varepsilon}
\| E(t_0+\varepsilon-s; t_0)\| \|(A(t_0)-A(s))u(s)\|_{L^2(\Omega)} \d s \\
&\le c \varepsilon^{-1}\int_{t_0}^{t_0+\varepsilon}
|t_0+\varepsilon-s|^{\alpha-1} |t_0-s| \|u(s)\|_{H^2(\Omega)} \d s \\
&\le c\varepsilon^{-1}\int_{t_0}^{t_0+\varepsilon}
|t_0+\varepsilon-s|^{\alpha-1} |t_0-s| s^{-\alpha} \|u_0\|_{L^2(\Omega)} \d s\\
&\le c\varepsilon^{-1}\int_{t_0}^{t_0+\varepsilon}
|t_0+\varepsilon-s|^{\alpha-1} (\varepsilon) t_0^{-\alpha} \|u_0\|_{L^2(\Omega)} \d s
\leq c \varepsilon^{\alpha} t_0^{-\alpha} \|u_0\|_{L^2(\Omega)} ,
\end{align*}
and similarly,
\begin{align*}
\|{\rm II}_+\|_{L^2(\Omega)}
&=\bigg\|\int_0^{t_0} \int_0^1 E'(t_0+\theta\varepsilon-s; t_0)(A(t_0)-A(s))u(s) \,\d\theta \d s \bigg\|_{L^2(\Omega)} \\
& \le c \int_0^1\int_0^{t_0} (t_0+\theta\varepsilon-s)^{ \alpha-2}(t_0-s)  \| u(s) \|_{H^2(\Omega)} \,\d s\d\theta \\
&\le c \int_0^{t_0} (t_0-s)^{\alpha-1}\| u(s) \|_{H^2(\Omega)} \,\d s\\
& \le c \int_0^{t_0}(t_0-s)^{ \alpha-1} s^{- \alpha}\| u_0 \|_{L^2(\Omega)}  \,\d s \le c \| u_0 \|_{L^2(\Omega)}.
\end{align*}
If $-t_0<\varepsilon<0$, then
\begin{align*}
\Lambda(\varepsilon)&=\varepsilon^{-1}
\int_{t_0-|\varepsilon|}^{t_0} E(t_0-s; t_0)(A(t_0)-A(s))u(s) \d s \nonumber \\
&\quad+\int_0^{t_0-|\varepsilon|} \frac{E(t_0-s; t_0)-E(t_0-|\varepsilon|-s; t_0)}{\varepsilon}(A(t_0)-A(s))u(s) \d s  =: {\rm I}_-+ {\rm II}_- ,
\end{align*}
and similarly, we obtain
\begin{align*}
\|{\rm I}_-\|_{L^2(\Omega)}
\le c\varepsilon^{\alpha} (t_0+\varepsilon)^{-\alpha} \|u_0\|_{L^2(\Omega)} \quad\mbox{and}\quad
\|{\rm II}_-\|_{L^2(\Omega)}
\le c \| u_0 \|_{L^2(\Omega)}.
\end{align*}
Combining the preceding estimates yields the assertion.
\end{proof}

\begin{lemma}\label{lem:u-int}
Let conditions \eqref{Cond-1} and \eqref{Cond-2} be fulfilled, and $u$ be the solution to problem \eqref{eqn:pde} with $f\in C([0,T];L^2(\Omega))$, $\int_0^{t}(t-s)^{\alpha-1}\|f'(s)\|_{L^2(\Omega)}\d s<\infty$ and $u_0=0$. Then there holds
\begin{equation*}
  \int_{0}^{t} (t-s)^{\alpha-1}\|u(s)\|_{H^2(\Omega)}\d s \leq c \|f(0)\|_{L^2(\Omega)}+c\int_0^{t}(t-s)^{\alpha-1}\|f'(s)\|_{L^2(\Omega)}\d s.
\end{equation*}
\end{lemma}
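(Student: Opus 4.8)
The plan is to establish a pointwise-in-$s$ bound
\[
\|u(s)\|_{H^2(\Omega)}\le c\|f(0)\|_{L^2(\Omega)}+c\int_0^s\|f'(\rho)\|_{L^2(\Omega)}\,\d\rho
\]
(valid for $s<t$) and then multiply by $(t-s)^{\alpha-1}$ and integrate. First I would freeze the coefficient at $t_0=s$ in the representation \eqref{eqn:Sol-expr-u} (with $u_0=0$), apply $A(s)$, and use the uniform elliptic regularity $\|u(s)\|_{H^2(\Omega)}\le c\|A(s)u(s)\|_{L^2(\Omega)}$ granted by \eqref{Cond-1}--\eqref{Cond-2} and the convexity of $\Omega$, to write
\[
A(s)u(s)=\int_0^s A(s)E(s-r;s)(A(s)-A(r))u(r)\,\d r+\int_0^s A(s)E(s-r;s)f(r)\,\d r=:P(s)+Q(s).
\]
The term $P(s)$ is harmless: combining $\|A(s)E(s-r;s)\|\le c(s-r)^{-1}$ from Lemma \ref{lem:smoothing}(ii) with $\|(A(s)-A(r))u(r)\|_{L^2(\Omega)}\le c(s-r)\|u(r)\|_{H^2(\Omega)}$ from \eqref{eqn:basic-est}, the singular factors cancel and $\|P(s)\|_{L^2(\Omega)}\le c\int_0^s\|u(r)\|_{H^2(\Omega)}\,\d r$.

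The crux is the source term $Q(s)$, where a naive bound produces the nonintegrable kernel $(s-r)^{-1}\|f(r)\|$. To avoid this I would use the identity $A(t_0)E(\tau;t_0)=-\partial_\tau F(\tau;t_0)$, which follows from $z^\alpha(z^\alpha+A(t_0))^{-1}=I-A(t_0)(z^\alpha+A(t_0))^{-1}$ and $\frac{1}{2\pi{\rm i}}\int_{\Gamma_{\theta,\delta}}e^{z\tau}\,\d z=0$ for $\tau>0$, together with $F(0;t_0)=I$, so that $\int_0^\sigma A(s)E(\tau;s)\,\d\tau=I-F(\sigma;s)$. Writing $f(r)=f(0)+\int_0^r f'(\rho)\,\d\rho$ and interchanging the order of integration in the $f'$ part turns $Q(s)$ into
\[
Q(s)=(I-F(s;s))f(0)+\int_0^s(I-F(s-\rho;s))f'(\rho)\,\d\rho.
\]
Since $\|I-F(\cdot;s)\|\le c$ uniformly in $s$ by Lemma \ref{lem:smoothing}(i) (the resolvent bound \eqref{eqn:resol} being uniform in the frozen time), this gives $\|Q(s)\|_{L^2(\Omega)}\le c\|f(0)\|_{L^2(\Omega)}+c\int_0^s\|f'(\rho)\|_{L^2(\Omega)}\,\d\rho$.

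Collecting the two bounds yields
\[
\|u(s)\|_{H^2(\Omega)}\le c\int_0^s\|u(r)\|_{H^2(\Omega)}\,\d r+c\|f(0)\|_{L^2(\Omega)}+c\int_0^s\|f'(\rho)\|_{L^2(\Omega)}\,\d\rho,
\]
which is legitimate for $s<t$ because the hypothesis $\int_0^t(t-\rho)^{\alpha-1}\|f'(\rho)\|\,\d\rho<\infty$ forces $\int_0^s\|f'\|\,\d\rho<\infty$ there (the weight $(t-\rho)^{\alpha-1}$ is bounded below on $[0,s]$). The last two terms are nondecreasing in $s$, so the standard Gronwall inequality absorbs the first integral and gives $\|u(s)\|_{H^2(\Omega)}\le c\|f(0)\|_{L^2(\Omega)}+c\int_0^s\|f'(\rho)\|_{L^2(\Omega)}\,\d\rho$. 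Finally I would multiply by $(t-s)^{\alpha-1}$ and integrate over $(0,t)$: the $f(0)$ term contributes $c\|f(0)\|_{L^2(\Omega)}t^\alpha/\alpha\le c\|f(0)\|_{L^2(\Omega)}$, and switching the order of integration in the $f'$ term gives $\int_0^t\|f'(\rho)\|_{L^2(\Omega)}\tfrac{(t-\rho)^\alpha}{\alpha}\,\d\rho\le\tfrac{T}{\alpha}\int_0^t(t-\rho)^{\alpha-1}\|f'(\rho)\|_{L^2(\Omega)}\,\d\rho$, using $(t-\rho)^\alpha\le T(t-\rho)^{\alpha-1}$, which is the claimed estimate.

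The main obstacle is the treatment of $Q(s)$: a direct estimate fails because of the $(s-r)^{-1}$ singularity of $A(s)E(s-r;s)$, and the resolution hinges on rewriting $A(s)E$ as a time derivative of $F$ so that one integration shifts the derivative onto $f$ and replaces the singular kernel by the uniformly bounded operator $I-F$. A secondary point needing care is the order interchange together with the bound $(t-\rho)^\alpha\le T(t-\rho)^{\alpha-1}$ that upgrades the unweighted $\int_0^s\|f'\|$ into the weighted quantity on the right-hand side, and the observation that all intermediate quantities are finite for $s<t$, the only possible blow-up of $f'$ being confined to the measure-zero endpoint $s=t$.
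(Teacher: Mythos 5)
Your proposal is correct and follows essentially the same route as the paper: the same frozen-coefficient solution representation, the same splitting of $A(t_0)u(t_0)$ into a source term and a perturbation term, the same key identity $A(t_0)E(\cdot\,;t_0)=-\frac{\d}{\d s}F(\cdot\,;t_0)$ combined with the uniform bound $\|I-F\|\le c$ from Lemma \ref{lem:smoothing}(i) to defuse the $(s-r)^{-1}$ singularity (your Fubini manipulation with $f(r)=f(0)+\int_0^r f'$ is equivalent to the paper's integration by parts), and a Gronwall argument to close. The only structural difference is that you run Gronwall pointwise on $\|u(s)\|_{H^2(\Omega)}$, obtaining a stronger intermediate pointwise bound (which tacitly uses the a priori integrability of $\|u(\cdot)\|_{H^2(\Omega)}$ guaranteed by Theorem \ref{thm:reg-space1}) and then integrate against the weight $(t-s)^{\alpha-1}$, whereas the paper applies Gronwall directly to the weighted quantity $g(t)=\int_0^t(t-s)^{\alpha-1}\|u(s)\|_{H^2(\Omega)}\,\d s$, closing the loop via the semigroup property of the Riemann--Liouville integral.
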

\begin{proof}
By the solution representation \eqref{eqn:Sol-expr-u} with $u_0=0$, we have
\begin{equation*}
  A(t_0) u(t_0) = \int_0^{t_0}A(t_0)E(t_0-s;t_0)f(s) \d s + \int_0^{t_0}A(t_0)E(t_0-s;t_0)(A(s)-A(t_0))u(s)\d s: = {\rm I}+ {\rm II}.
\end{equation*}
It follows directly from the definition of the operators $E(s;t_0)$ and $F(s;t_0)$ that
the identity $A(t_0)E(s;t_0)=-\frac{\d}{\d s}F(s;t_0)=\frac{\d}{\d s}(I-F(s;t_0))$ holds.
So upon changing variables and integration by parts, we obtain
\begin{align*}
  {\rm I} & = \int_0^{t_0}A(t_0)E(s;t_0)f(t_0-s) \d s
       = \int_0^{t_0}\frac{\d}{\d s}(I-F(s;t_0))f(t_0-s) \d s\\
     & = (F(t_0;t_0)-I)f(0) - \int_0^{t_0}(I-F(s;t_0))\frac{\d}{\d s}f(t_0-s)\d s,
\end{align*}
where we have used the identity $F(0;t_0)=I$. Thus, by Lemma \ref{lem:smoothing}(i), we obtain
\begin{equation*}
  \|{\rm I}\|_{L^2(\Omega)}\leq c\|f(0)\|_{L^2(\Omega)} + c\int_0^{t_0}\|f'(s)\|_{L^2(\Omega)}\d s.
\end{equation*}
Similarly, by Lemma \ref{lem:smoothing}(ii) and \eqref{eqn:basic-est}, for the term ${\rm II}$, we have
\begin{align*}
   \|{\rm II}\|_{L^2(\Omega)}   \leq c\int_0^{t_0}(t_0-s)^{-1}|t_0-s|\|A(t_0)u(s)\|_{L^2(\Omega)}\d s =  c\int_0^{t_0}\|A(t_0)u(s)\|_{L^2(\Omega)}\d s.
\end{align*}
Let $g(t)=\int_0^t(t-s)^{\alpha-1}\|u(s)\|_{H^2(\Omega)} \d s$. Then the last two estimates together give
\begin{align*}
   g(t) & \leq c \int_0^t(t-s)^{\alpha-1}(\|{\rm I}\|_{L^2(\Omega)} + \|{\rm II}\|_{L^2(\Omega)})\,\d s  \\
       & \leq c\int_0^t(t-s)^{\alpha-1}\big(\|f(0)\|_{L^2(\Omega)}+\int_0^s\|f'(\xi)\|_{L^2(\Omega)}\d \xi+\int_0^{\xi}\|u(\xi)\|_{H^2(\Omega)}\d \xi\big)\,\d s\\
      & \leq ct^\alpha\|f(0)\|_{L^2(\Omega)} + c\int_0^t(t-s)^{\alpha}\|f'(s)\|_{L^2(\Omega)}\d s+ c\int_0^tg(s)\,\d s,
\end{align*}
where the last line follows directly from the semigroup property of Riemann-Liouville integral and change of integration orders.
Now Gronwall's inequality gives
\begin{equation*}
  g(t) \leq ct^\alpha\|f(0)\|_{L^2(\Omega)} + c\int_0^t(t-s)^{\alpha}\|f'(s)\|_{L^2(\Omega)}\d s,
\end{equation*}
from which the desired assertion follows directly.
\end{proof}

\begin{lemma}\label{lem:time-v2}
Let conditions \eqref{Cond-1} and \eqref{Cond-2} be fulfilled, and $u$ be the solution to problem \eqref{eqn:pde} with $f\in C([0,T];L^2(\Omega))$, $\int_0^{t}(t-s)^{\alpha-1}\|f'(s)\|_{L^2(\Omega)}\d s<\infty$ and $u_0=0$. Then there holds
\begin{equation*}
  \|\frac{\d}{\d t}\int_0^t E(t-s; t_0)(A(t_0)-A(s))u(s) \d s\big|_{t=t_0}\|_{L^2(\Omega)}\leq c \|f(0)\|_{L^2(\Omega)}+c\int_0^{t_0}(t_0-s)^{\alpha-1}\|f'(s)\|_{L^2(\Omega)}\d s.
\end{equation*}
\end{lemma}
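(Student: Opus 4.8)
The plan is to reproduce the splitting used in the proof of Lemma \ref{lem:time-t}, but to replace the pointwise spatial bound (which is not available for the inhomogeneous problem) by the weighted integral estimate of Lemma \ref{lem:u-int}. As in \eqref{eqn:ut-split}, I write the quantity to be bounded as $\mathrm{I}=\lim_{\varepsilon\to0}\Lambda(\varepsilon)$, and for $\varepsilon>0$ decompose $\Lambda(\varepsilon)=\mathrm{I}_++\mathrm{II}_+$ exactly as in \eqref{I-II+}, with the analogous decomposition $\mathrm{I}_-+\mathrm{II}_-$ for $-t_0<\varepsilon<0$. Abbreviating $g(s):=(A(t_0)-A(s))u(s)$ and using \eqref{eqn:basic-est} so that $\|g(s)\|_{L^2(\Omega)}\le c|t_0-s|\,\|u(s)\|_{H^2(\Omega)}$, all the bounds reduce to estimating weighted integrals of $\|u(s)\|_{H^2(\Omega)}$.

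I would first treat the principal terms $\mathrm{II}_\pm$. Writing $\mathrm{II}_+=\int_0^{t_0}\int_0^1 E'(t_0+\theta\varepsilon-s;t_0)g(s)\,\d\theta\,\d s$ and invoking Lemma \ref{lem:smoothing}(ii) together with the monotonicity $(t_0+\theta\varepsilon-s)^{\alpha-2}\le(t_0-s)^{\alpha-2}$ for $\varepsilon,\theta\ge0$, I obtain the $\varepsilon$-uniform bound
\begin{equation*}
\|\mathrm{II}_+\|_{L^2(\Omega)}\le c\int_0^{t_0}(t_0-s)^{\alpha-1}\|u(s)\|_{H^2(\Omega)}\,\d s .
\end{equation*}
This is exactly the quantity controlled by Lemma \ref{lem:u-int}, which yields the desired right-hand side $c\|f(0)\|_{L^2(\Omega)}+c\int_0^{t_0}(t_0-s)^{\alpha-1}\|f'(s)\|_{L^2(\Omega)}\,\d s$. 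Since this integrable majorant is $\varepsilon$-independent, dominated convergence gives $\mathrm{II}_+\to\int_0^{t_0}E'(t_0-s;t_0)g(s)\,\d s$ and the same estimate bounds the limit; the term $\mathrm{II}_-$ is handled identically.

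It remains to show that the boundary terms $\mathrm{I}_\pm$ vanish as $\varepsilon\to0$, and this is where the argument departs most from Lemma \ref{lem:time-t}: there the pointwise bound $\|u(s)\|_{H^2(\Omega)}\le cs^{-\alpha}\|u_0\|_{L^2(\Omega)}$ was available, whereas here only the regularity $u\in L^p(0,T;\dot H^2(\Omega))$ with $1/\alpha<p<\infty$ from Theorem \ref{thm:reg-space1} is at hand. For $\mathrm{I}_+$, using Lemma \ref{lem:smoothing}(ii), the bound $\|g(s)\|_{L^2(\Omega)}\le c\varepsilon\|u(s)\|_{H^2(\Omega)}$ on $[t_0,t_0+\varepsilon]$, and the change of variable $r=t_0+\varepsilon-s$, I get $\|\mathrm{I}_+\|_{L^2(\Omega)}\le c\int_0^{\varepsilon}r^{\alpha-1}\|u(t_0+\varepsilon-r)\|_{H^2(\Omega)}\,\d r$. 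Applying Hölder's inequality with exponents $p$ and $p'=p/(p-1)$, the weight contributes $(\int_0^\varepsilon r^{(\alpha-1)p'}\d r)^{1/p'}=c\varepsilon^{\alpha-1/p}$, which tends to $0$ precisely because $\alpha>1/p$, while $\|u\|_{L^p(0,T;\dot H^2(\Omega))}<\infty$ bounds the remaining factor; an analogous estimate (now using $(t_0-s)^\alpha\le|\varepsilon|^\alpha$ over $[t_0-|\varepsilon|,t_0]$) gives $\mathrm{I}_-\to0$. Combining the contributions of $\mathrm{I}_\pm$ and $\mathrm{II}_\pm$ proves that the two-sided limit exists and satisfies the claimed bound. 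The main obstacle is exactly this loss of the pointwise $H^2$-bound on $u$: the inhomogeneous solution need not remain bounded in $\dot H^2(\Omega)$ up to $t=0$, so the boundary terms must be controlled through $L^p$-in-time regularity and the restriction $p>1/\alpha$, rather than by a direct pointwise substitution, while Lemma \ref{lem:u-int} is tailored so that the dominant terms $\mathrm{II}_\pm$ follow at once.
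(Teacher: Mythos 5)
Your proposal is correct and follows essentially the same route as the paper's proof: the same splitting \eqref{eqn:ut-split}--\eqref{I-II+}, with ${\rm II}_\pm$ bounded via Lemma \ref{lem:smoothing}(ii) and then Lemma \ref{lem:u-int}, and ${\rm I}_\pm$ shown to vanish via H\"older's inequality and the $L^p(0,T;\dot H^2(\Omega))$ regularity of Theorem \ref{thm:reg-space1}. The only difference is that you spell out the details (the exponent computation $\varepsilon^{\alpha-1/p}\to0$ requiring $p>1/\alpha$, and dominated convergence for ${\rm II}_\pm$) that the paper leaves implicit.
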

\begin{proof}
For any small $\varepsilon>0$, we employ the splitting \eqref{eqn:ut-split}.
By Lemma \ref{lem:smoothing}(ii) and \eqref{eqn:basic-est}, we bound the term ${\rm I}_+$ by
\begin{align*}
\|{\rm I}_+\|_{L^2(\Omega)} &\le \varepsilon^{-1}\int_{t_0}^{t_0+\varepsilon}
\| E(t_0+\varepsilon-s; t_0)\| \|(A(t_0)-A(s)) u(s)\|_{L^2(\Omega)} \d s \\
&\le c \varepsilon^{-1} \int_{t_0}^{t_0+\varepsilon}
|t_0+\varepsilon-s|^{\alpha-1} |t_0-s| \|u(s)\|_{H^2(\Omega)} \d s\\
&\le c\int_{t_0}^{t_0+\varepsilon}
|t_0+\varepsilon-s|^{\alpha-1} \|u(s)\|_{H^2(\Omega)} \d s.
\end{align*}
This estimate, H\"{o}lder's inequality and Theorem \ref{thm:reg-space1} directly
imply $\lim_{\varepsilon\to0^+}\|{\rm I}_+\|_{L^2(\Omega)}=0$.
For the term ${\rm II}_+$,  Lemma \ref{lem:smoothing}(ii) gives
\begin{align*}
\|{\rm II}_+\|_{L^2(\Omega)}&=
\bigg\|\int_0^{t_0} \int_0^1 E'(t_0+\theta\varepsilon-s; t_0)(A(t_0)-A(s))u(s) \,\d\theta \d s \bigg\|_{L^2(\Omega)} \\
& \le c \int_0^1\int_0^{t_0} (t_0+\theta\varepsilon-s)^{ \alpha-2}(t_0-s)  \| u(s) \|_{H^2(\Omega)} \,\d s\d\theta \\
&\le c \int_0^{t_0} (t_0-s)^{\alpha-1}\| u(s) \|_{H^2(\Omega)} \,\d s,
\end{align*}
which together with Lemma \ref{lem:u-int} yields the assertion for $\varepsilon>0$.
Similar estimates hold for the case $\varepsilon<0$, and this completes the
proof of the lemma.
\end{proof}

\begin{remark}
Note that the bound on ${\rm II}_+$ in Lemma \ref{lem:time-t} blows up for $ \alpha\to 1^-$:
\begin{equation*}
  \int_0^{t_0}(t_0-s)^{ \alpha-1} s^{- \alpha} \,\d s = B(\alpha,1-\alpha),
\end{equation*}
and in view of the asymptotics $B(\alpha,1-\alpha)=O((1-\alpha)^{-1})$ as $\alpha\to 1^-$, it blows up at a rate
$1/(1-\alpha)$. Actually, this can be avoided by the following alternative argument:
\begin{align*}
\|{\rm II}_+\|_{L^2(\Omega)}
&=\bigg\|\int_0^{t_0} \int_0^1 E'(t_0+\theta\varepsilon-s; t_0)A(t_0)^{1/2}A(t_0)^{1/2}(1-A(t_0)^{-1}A(s))u(s) \,\d\theta \d s \bigg\|_{L^2(\Omega)} \\
& \le c \int_0^1\int_0^{t_0} (t_0+\theta\varepsilon-s)^{\alpha/2-2}(t_0-s)  \|A(t_0)^{1/2}u(s)\|_{L^2(\Omega)} \,\d s\d\theta \\
&\le c \int_0^{t_0} (t_0-s)^{\alpha/2-1}\| u(s) \|_{H^1(\Omega)} \,\d s\\
& \le c \int_0^{t_0}(t_0-s)^{ \alpha/2-1} s^{- \alpha/2}\| u_0 \|_{L^2(\Omega)}  \,\d s \le c \| u_0 \|_{L^2(\Omega)},
\end{align*}
where the first inequality is due to \eqref{eqn:basic-est}, Corollary \ref{cor:basic-est} below and interpolation.
The same argument can be applied to the term ${\rm II_+}$ in Lemma \ref{lem:time-v2}. Thus, the involved constants
are bounded for $\alpha\to 1^-$.
\end{remark}

Now we can give the temporal regularity of the solution $u$.
\begin{theorem}\label{thm:reg-time}
Let conditions \eqref{Cond-1}-\eqref{Cond-2} be fulfilled, and $u$ be the solution to problem \eqref{eqn:pde}.
\begin{itemize}
\item[(i)] For $ u_0\in \dot H^\beta(\Omega)$, $0\leq \beta\le 2$, and $f=0$, then
\begin{equation*}
    \| u'(t) \|_{L^2(\Omega)} \le c t^{-(1-\alpha\beta/2)} \| u_0  \|_{\dot H^\beta(\Omega)}.
\end{equation*}
\item[(ii)] For $u_0=0$, $f\in C([0,T]; L^2(\Omega))$ and $\int_0^t  (t-s)^{\alpha-1}\| f'(s) \|_{L^2(\Omega)}\,\d s<\infty$, then
\begin{align*}
   & \| u '(t) \|_{L^2(\Omega)} \le c t^{-(1-\alpha)}  \| f(0) \|_{L^2(\Omega)} +c\int_0^t (t-s)^{\alpha-1} \| f'(s)\|_{L^2(\Omega)} \,\d s.
\end{align*}
\item[(iii)] For $u_0=0$ and $f\in L^p(0,T;L^2(\Omega))$ with ${2}/{\alpha}<p<\infty$, then
\begin{equation*}
  \|u(t)\|_{H^1(\Omega)}\leq c\|f\|_{L^p(0,t;L^2(\Omega))}.
\end{equation*}
\end{itemize}
\end{theorem}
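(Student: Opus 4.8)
The plan is to prove all three parts from the frozen-coefficient representation \eqref{eqn:Sol-expr-u}, differentiated in $t$ and then evaluated at $t=t_0$, so that the first-order vanishing of $(A(t_0)-A(s))$ near $s=t_0$ tames the singular kernel $E$; the genuinely hard cancellations are already packaged into Lemmas \ref{lem:time-t} and \ref{lem:time-v2}. Throughout I use that, for each fixed $t_0$, the operator $A(t_0)$ satisfies \eqref{eqn:resol} with a constant uniform in $t_0$ by \eqref{Cond-1}, so Lemma \ref{lem:smoothing} applies verbatim to $F(\cdot;t_0)$, $E(\cdot;t_0)$ with $t_0$-independent constants. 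For part (i), I set $f=0$ and write
\[
u'(t_0) = F'(t_0;t_0)u_0 + \frac{\d}{\d t}\Big|_{t=t_0}\int_0^t E(t-s;t_0)(A(t_0)-A(s))u(s)\,\d s .
\]
Since $\dot H^\beta(\Omega)$ is norm-equivalent to $D(A(t_0)^{\beta/2})$ uniformly in $t_0$, and $F'(\cdot;t_0)$ commutes with $A(t_0)^{-\beta/2}$, Lemma \ref{lem:smoothing}(iii) (with exponent $\beta/2\in[0,1]$) gives $\|F'(t_0;t_0)u_0\|_{L^2(\Omega)}\le \|A(t_0)^{-\beta/2}F'(t_0;t_0)\|\,\|A(t_0)^{\beta/2}u_0\|_{L^2(\Omega)}\le ct_0^{-(1-\alpha\beta/2)}\|u_0\|_{\dot H^\beta(\Omega)}$. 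The second term is bounded by Lemma \ref{lem:time-t} (applicable since $u_0\in\dot H^\beta(\Omega)\subset L^2(\Omega)$) by $c\|u_0\|_{L^2(\Omega)}\le c\|u_0\|_{\dot H^\beta(\Omega)}$; because $1-\alpha\beta/2\ge 1-\alpha>0$ for every $\beta\in[0,2]$, this bounded contribution is dominated by $t_0^{-(1-\alpha\beta/2)}$ on $(0,T]$, yielding the claim.

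For part (ii) I set $u_0=0$ and split $u'(t_0)$ into the perturbation derivative and the source derivative $\frac{\d}{\d t}\int_0^t E(t-s;t_0)f(s)\,\d s|_{t=t_0}$. The perturbation derivative is exactly the quantity estimated in Lemma \ref{lem:time-v2}, whose bound already equals the asserted right-hand side. For the source derivative I change variables $s\mapsto t-s$ and integrate by parts in time to get $E(t_0;t_0)f(0)+\int_0^{t_0}E(s;t_0)f'(t_0-s)\,\d s$; applying $\|E(s;t_0)\|\le cs^{\alpha-1}$ from Lemma \ref{lem:smoothing}(ii) to the two summands reproduces $ct_0^{-(1-\alpha)}\|f(0)\|_{L^2(\Omega)}$ and $c\int_0^{t_0}(t_0-s)^{\alpha-1}\|f'(s)\|_{L^2(\Omega)}\,\d s$, respectively.

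For part (iii) I apply $A(t_0)^{1/2}$ (equivalent to the $H^1(\Omega)$ norm by \eqref{Cond-1}, uniformly in $t_0$) to \eqref{eqn:Sol-expr-u} at $t=t_0$ with $u_0=0$. The moment inequality applied to the bounds $\|E(t;t_0)\|\le ct^{\alpha-1}$ and $\|A(t_0)E(t;t_0)\|\le ct^{-1}$ of Lemma \ref{lem:smoothing}(ii) gives $\|A(t_0)^{1/2}E(t;t_0)\|\le ct^{\alpha/2-1}$. For the source integral, Hölder's inequality then yields a bound $c\,\|(t_0-\cdot)^{\alpha/2-1}\|_{L^{p'}(0,t_0)}\|f\|_{L^p(0,t_0;L^2(\Omega))}$, and the kernel lies in $L^{p'}$ exactly under the stated hypothesis $p>2/\alpha$. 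For the perturbation integral, \eqref{eqn:basic-est} supplies the extra factor $|t_0-s|$, converting the kernel into the integrable $(t_0-s)^{\alpha/2}$, so Hölder together with the global estimate $\|u\|_{L^p(0,t_0;\dot H^2(\Omega))}\le c\|f\|_{L^p(0,t_0;L^2(\Omega))}$ from Theorem \ref{thm:reg-space1} closes the bound.

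The \emph{main obstacle}, namely the nonintegrable $t^{\alpha-1}$ singularity of $E$ encountered when differentiating the perturbation convolution at $t=t_0$, has already been isolated and resolved in the freezing Lemmas \ref{lem:time-t} and \ref{lem:time-v2}, so the residual work here is essentially bookkeeping of the smoothing exponents. The most delicate remaining point is the sharp Hölder threshold in part (iii): I would verify carefully that $L^{p'}$-integrability of the kernel $(t_0-s)^{\alpha/2-1}$ is equivalent to $p'(1-\alpha/2)<1$, i.e. precisely to $p>2/\alpha$, which explains why this is exactly the hypothesis imposed.
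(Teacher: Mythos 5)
Your parts (i) and (ii) coincide with the paper's own proof essentially step for step: the paper likewise differentiates the frozen-coefficient representation \eqref{eqn:Sol-expr-u}, bounds the $F'(t_0;t_0)u_0$ term via Lemma \ref{lem:smoothing}(iii) with exponent $\beta/2$, invokes Lemma \ref{lem:time-t} (resp.\ Lemma \ref{lem:time-v2}) for the perturbation derivative, and treats the source derivative through the identity $\frac{\d}{\d t}\int_0^t E(s;t_0)f(t-s)\,\d s = E(t;t_0)f(0)+\int_0^t E(s;t_0)f'(t-s)\,\d s$ together with Lemma \ref{lem:smoothing}(ii). Part (iii) is where you genuinely diverge, and your argument is correct. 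The paper never estimates the representation formula there: it uses the maximal $L^p$-regularity bound behind Theorem \ref{thm:reg-space1} (estimate \eqref{maxLp-2}) to place $u$ in $L^p(0,T;H^2(\Omega))\cap W^{\alpha,p}(0,T;L^2(\Omega))$, and then a mixed-smoothness embedding $L^p(0,T;H^2(\Omega))\cap W^{\alpha,p}(0,T;L^2(\Omega))\hookrightarrow W^{\alpha/2,p}(0,T;H^1(\Omega))\hookrightarrow C([0,T];H^1(\Omega))$, the last step being the Sobolev embedding valid precisely when $\alpha/2>1/p$. You instead apply $A(t_0)^{1/2}$ directly to \eqref{eqn:Sol-expr-u}, use the moment inequality $\|A^{1/2}w\|^2\le \|Aw\|\,\|w\|$ with Lemma \ref{lem:smoothing}(ii) to get $\|A(t_0)^{1/2}E(t;t_0)\|\le ct^{\alpha/2-1}$, and close with H\"older's inequality, \eqref{eqn:basic-est}, and the $L^p(0,t_0;\dot H^2(\Omega))$ bound from Theorem \ref{thm:reg-space1}. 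The trade-off: your route is more elementary and self-contained (no interpolation-space machinery or external embedding results), and it makes the threshold $p>2/\alpha$ transparent as the kernel-integrability condition $(1-\alpha/2)p'<1$; the paper's route is shorter given that machinery and yields slightly more, namely continuity $u\in C([0,T];H^1(\Omega))$ rather than only the pointwise-in-time bound --- though the pointwise bound is all the theorem actually asserts.
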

\begin{proof}
The proof employs the solution representation \eqref{eqn:Sol-expr-u}.
By Lemma \ref{lem:smoothing}(iii), we have
\begin{align*}
\big\|\frac{\d}{\d t}F(t; t_0)u_0\big|_{t=t_0}\big\|_{L^2(\Omega)}
\le ct_0^{-(1-\alpha\beta/2)}\|u_0 \|_{\dot H^\beta(\Omega)} .
\end{align*}
This and Lemma \ref{lem:time-t} yield the assertion in part (i).

To show part (ii), differentiating \eqref{eqn:Sol-expr-u} with respect to $t$ yields
\begin{align}\label{utaaa}
{u'(t_0)}
&=\frac{\d}{\d t}\int_0^t E(t-s; t_0) (A(t_0)-A(s))u(s) \d s\bigg|_{t=t_0}
+ \frac{\d}{\d t}\int_0^t E(s; t_0) f(t-s) \d s{ \bigg|_{t=t_0} }.
\end{align}
In view of the identity
$$
\frac{\d}{\d t}\int_0^t E(s; t_0) f(t-s) \d s  = E(t;t_0)f(0) + \int_0^t E(s;t_0) f'(t-s)\d s,
$$
by Lemma \ref{lem:smoothing}(ii), we have
\begin{align}
\bigg\|\frac{\d}{\d t}\int_0^t E(s; t_0) f(t-s) \d s\bigg\|_{L^2(\Omega)} &
\le \|E(t;t_0)f(0)\|_{L^2(\Omega)} + \int_0^t\| E(s;t_0) f'(t-s)\|_{L^2(\Omega)}\d s,\nonumber\\
& \leq ct^{-(1-\alpha)} \|  f(0) \|_{L^2(\Omega)} +c\int_0^t s^{\alpha-1} \| f'(t-s)\|_{L^2(\Omega)} \,\d s.\label{aaa2}
\end{align}
This and Lemma \ref{lem:time-v2} complete the proof of part (ii).

Last, for the choice ${2}/{\alpha}<p<\infty$, Lemma \ref{Lemma:indp} implies
\begin{align*}
u\in L^p(0,T;H^2(\Omega))&\cap W^{\alpha,p}(0,T;L^2(\Omega))
\hookrightarrow W^{{\alpha}/{2},p}(0,T;(L^2(\Omega),H^2(\Omega))_{1/ 2}) \\
&=W^{{\alpha}/{2},p}(0,T;H^1(\Omega)) \hookrightarrow C([0,T];H^1(\Omega)) ,
\end{align*}
where $(L^2(\Omega),H^2(\Omega))_{1/2}$ denotes the complex interpolation space between $L^2(\Omega)$
and $H^2(\Omega)$, and the last embedding is a consequence of \cite[equation (2.3)]{JinLiZhou:nonlinear}.
Then the proof of Theorem \ref{thm:reg-time} is complete.
\end{proof}

\begin{remark}\label{rmk:reg}
In the error analysis, the work \cite{Mustapha:2017} requires the following conditions on the coefficient $a(x,t)$:
$a(x,t),\partial_ta(x,t)\in L^\infty(0,T;W^{1,\infty}(\Omega))$ and $\partial_{tt}^2a(x,t)\in L^\infty(0,T;L^\infty(\Omega))$,
which are more stringent than \eqref{Cond-2}. Further, the work \cite{Mustapha:2017}
has assumed the following regularity on the solution $u$ to the homogeneous problem: for $ 0\leq p \leq q \leq 2$,
\begin{equation*}
  \|u(t)\|_{\dot H^q(\Omega)} + t\|u'(t)\|_{\dot H^q(\Omega)} \leq c t^{-(q-p)\alpha /2}\|u_0\|_{\dot H^p(\Omega)}.
\end{equation*}
In contrast, for the homogeneous problem, we proved the following estimates
under assumption \eqref{Cond-2}:
\begin{equation*}
 t^{(1-\beta/2)\alpha}\|u(t)\|_{H^2(\Omega)} + t^{1-\alpha\beta/2}\|u'(t)\|_{L^2(\Omega)} \le c\|u_0\|_{\dot H^\beta(\Omega)}
\end{equation*}
and similar estimates for the inhomogeneous problem. It is worth noting that unlike the argument in \cite{Mustapha:2017}, the error
analysis below does not need the regularity $\|u'(t)\|_{\dot H^2(\Omega)}$, which allows us to relax the regularity
assumption on the coefficient $a(x,t)$.
\end{remark}

\begin{remark}\label{rmk:high-reg}
Our discussions focus on the low regularity in space, i.e., $u(t)\in H^2(\Omega)$, which is
sufficient for the error analysis of the piecewise linear FEM in Section \ref{sec:semi}. These results
cannot be further improved for $u_0\in L^2(\Omega)$ or $f\in L^p(0,T;L^2(\Omega))$,
due to the limited smoothing properties of the solution operators {\rm(}at most of order two in space{\rm)}.
For smoother problem data, one may expect higher spatial regularity of the solution. For example, for the homogeneous
problem with a time-independent elliptic operator, there holds for any $\beta\geq 0$ \cite{SakamotoYamamoto:2011}
\begin{equation*}
 \|u(t)\|_{\dot H^{2+\beta}(\Omega)}\le ct^{-\alpha} \|u_0\|_{\dot H^\beta(\Omega)},\quad t>0.
\end{equation*}
Naturally, one may expect similar estimates for the case of a time-dependent elliptic operator, provided both
the domain $\Omega$ and the coefficient $a(x,t)$ are sufficiently smooth. Further, note that the
regularity analysis extends straightforwardly to the slightly more general elliptic operators with the potential
and convective terms, provided that the coefficients in the lower-order terms have suitable regularity.
\end{remark}

\section{Semi-discrete Galerkin finite element method}\label{sec:semi}
In this part we investigate the semidiscrete Galerkin FEM.
Let $\mathcal{T}_h$ be a shape regular quasi-uniform triangulation of the domain $\Omega$ into simplicial elements, and $h$ be
the maximal diameter of the elements. Let $S_{h}\subset H_0^1(D)$ be the space of continuous piecewise linear functions over
the triangulation $\mathcal{T}_h$. Then we define the $L^2(\Omega)$ orthogonal projection $P_h:L^2(\Omega)\to S_h$ by
\begin{equation*}
  (P_h\varphi,\chi) = (\varphi,\chi)\quad \forall\,\varphi\in L^2(\Omega),\,\,\forall\,\chi\in S_h.
\end{equation*}
The operator $P_h$ satisfies the following error estimate
\begin{equation*}
  \|P_h\varphi-\varphi\|_{L^2(\Omega)} + h\|\nabla (P_h\varphi-\varphi)\|_{L^2(\Omega)}\leq ch^q\|\varphi\|_{H^q(\Omega)},\quad \varphi\in \dot H^q(\Omega), \,\, q=1,2.
\end{equation*}

The spatially semidiscrete FEM for problem \eqref{eqn:pde} reads: find $u_h(t)\in S_h$ such that
\begin{align}\label{eqn:fem}
\begin{aligned}
&(\Dal u_h(t) ,\chi)
+(a(\cdot,t)\nabla u_h(t),\nabla \chi) = (f(\cdot,t),\chi) ,\quad\forall \chi\in S_h,\,t\in(0,T],\quad\mbox{with }u_h(0)=P_hu_0 .
\end{aligned}
\end{align}
Then we define a time-dependent operator $A_h(t):S_h\rightarrow S_h$ by
\begin{align*}
(A_h(t)v_h,\chi)=(a(\cdot,t)\nabla v_h,\nabla \chi) ,\quad
\forall\, v_h,\chi\in S_h .
\end{align*}
Under condition \eqref{Cond-1}, $A_h(t):S_h\rightarrow S_h$ is bounded and
invertible on $S_h$, and problem \eqref{eqn:fem} can be rewritten as
\begin{align}\label{eqn:fem2}
\begin{aligned}
&\Dal u_h(t)+A_h(t)u_h(t)=P_hf(t),\quad\forall\, t\in(0,T] ,\quad \mbox{with }u_h(0)=P_hu_0 .
\end{aligned}
\end{align}

\subsection{Perturbation lemmas}\label{ssec:perturbation}
In this part we give two crucial perturbation results.
We need a time-dependent Ritz projection operator $R_h(t):H_0^1(\Omega)\to S_h$ defined by
\begin{equation}\label{eqn:Ritz}
  (a(\cdot,t)\nabla R_h(t)\varphi,\nabla \chi) = (a(\cdot,t)\nabla \varphi,\nabla \chi),\quad \forall \varphi\in H_0^1(\Omega),\chi\in S_h.
\end{equation}
The operator $R_h(t)$ satisfies the following approximation property \cite[p. 99]{LuskinRannacher:1982}:
\begin{equation}\label{eqn:Ritz-error}
  \|R_h(t)\varphi-\varphi\|_{L^2(\Omega)} + h\|\nabla(R_h(t)\varphi-\varphi)\|_{L^2(\Omega)}\leq ch^q\|\varphi\|_{H^q(\Omega)},\quad \varphi\in \dot H^q(\Omega), q=1,2.
\end{equation}

\begin{lemma}\label{lemma-fem-1}
Under conditions \eqref{Cond-1}-\eqref{Cond-2}, the following estimate holds:
\begin{align*}
\|(I-A_h(t)^{-1}A_h(s))v_h\|_{L^2(\Omega)}
\le c|t-s|\|v_h\|_{L^2(\Omega)} ,\quad\forall\, v_h\in S_h.
\end{align*}
\end{lemma}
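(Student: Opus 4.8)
The plan is to recognize $w_h := (I - A_h(t)^{-1}A_h(s))v_h$ as the finite element Galerkin approximation of a continuous elliptic problem whose data is controlled by $|t-s|$ times $v_h$, and then to play off the $O(h)$ approximation property of the Ritz projection $R_h(t)$ against the $O(h^{-1})$ inverse inequality on the quasi-uniform mesh. First I would derive the defining variational identity for $w_h$. Since $A_h(t)w_h = (A_h(t)-A_h(s))v_h$, testing against $\chi\in S_h$ and using the definition of $A_h(\cdot)$ gives $(a(\cdot,t)\nabla w_h,\nabla\chi) = ((a(\cdot,t)-a(\cdot,s))\nabla v_h,\nabla\chi)$ for all $\chi\in S_h$. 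Let $\eta\in H_0^1(\Omega)$ be the continuous weak solution of $(a(\cdot,t)\nabla\eta,\nabla\chi)=((a(\cdot,t)-a(\cdot,s))\nabla v_h,\nabla\chi)$ for all $\chi\in H_0^1(\Omega)$. Because the right-hand side above coincides with $(a(\cdot,t)\nabla\eta,\nabla\chi)$ for $\chi\in S_h\subset H_0^1(\Omega)$, the definition \eqref{eqn:Ritz} of $R_h(t)$ identifies $w_h=R_h(t)\eta$.

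Next I would establish two bounds on $\eta$. For the energy bound, test with $\chi=\eta$ and use coercivity \eqref{Cond-1} together with $|a(\cdot,t)-a(\cdot,s)|\le c|t-s|$ (from \eqref{Cond-2}) to get $\|\nabla\eta\|_{L^2(\Omega)}\le c|t-s|\|\nabla v_h\|_{L^2(\Omega)}$. The $L^2$ bound, which is the crux, I would obtain by Aubin--Nitsche duality: for $g\in L^2(\Omega)$, let $\zeta\in\dot H^2(\Omega)$ solve $(a(\cdot,t)\nabla\zeta,\nabla\chi)=(g,\chi)$ for all $\chi\in H_0^1(\Omega)$, so that $\|\zeta\|_{H^2(\Omega)}\le c\|g\|_{L^2(\Omega)}$ by elliptic regularity on the convex domain (using that $a(\cdot,t)$ is Lipschitz in $x$ by \eqref{Cond-2}). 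By symmetry of $a$ and the equations for $\eta$ and $\zeta$, $(\eta,g)=(a(\cdot,t)\nabla\eta,\nabla\zeta)=((a(\cdot,t)-a(\cdot,s))\nabla v_h,\nabla\zeta)$. Integrating by parts to move the derivative off $v_h$ (the boundary term vanishes since $v_h|_{\partial\Omega}=0$) and using both $|a(\cdot,t)-a(\cdot,s)|\le c|t-s|$ and $|\nabla_x(a(\cdot,t)-a(\cdot,s))|\le c|t-s|$ yield $|(\eta,g)|\le c|t-s|\|v_h\|_{L^2(\Omega)}\|\zeta\|_{H^2(\Omega)}\le c|t-s|\|v_h\|_{L^2(\Omega)}\|g\|_{L^2(\Omega)}$, and hence $\|\eta\|_{L^2(\Omega)}\le c|t-s|\|v_h\|_{L^2(\Omega)}$ after taking the supremum over $g$.

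Finally I would assemble the estimate. By the triangle inequality, the Ritz approximation property \eqref{eqn:Ritz-error} with $q=1$, the energy bound, and the inverse inequality $\|\nabla v_h\|_{L^2(\Omega)}\le ch^{-1}\|v_h\|_{L^2(\Omega)}$, one gets $\|w_h\|_{L^2(\Omega)}\le\|\eta\|_{L^2(\Omega)}+\|R_h(t)\eta-\eta\|_{L^2(\Omega)}\le c|t-s|\|v_h\|_{L^2(\Omega)}+ch\|\eta\|_{H^1(\Omega)}\le c|t-s|\|v_h\|_{L^2(\Omega)}+ch|t-s|\|\nabla v_h\|_{L^2(\Omega)}\le c|t-s|\|v_h\|_{L^2(\Omega)}$, where the factor $h$ from the approximation error cancels the $h^{-1}$ from the inverse inequality.

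The main obstacle is precisely the $L^2$ estimate on $\eta$: a direct energy or duality argument only controls things by $\|\nabla v_h\|_{L^2(\Omega)}$, which is $O(h^{-1}\|v_h\|_{L^2(\Omega)})$ and would ruin the claimed bound. The integration-by-parts step that transfers the derivative from $v_h$ onto the smooth dual solution $\zeta$ is what converts $\|\nabla v_h\|_{L^2(\Omega)}$ into $\|v_h\|_{L^2(\Omega)}$, and it relies essentially on the control of $\nabla_x(a(\cdot,t)-a(\cdot,s))$ supplied by \eqref{Cond-2}; the interplay of the $O(h)$ Ritz error with the $O(h^{-1})$ inverse inequality is the second mechanism that must be handled with care.
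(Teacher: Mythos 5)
Your proposal is correct and follows essentially the same route as the paper's proof: identify $(I-A_h(t)^{-1}A_h(s))v_h$ as the Ritz projection $R_h(t)\eta$ of an auxiliary continuous elliptic solution $\eta$ with data $((a(\cdot,t)-a(\cdot,s))\nabla v_h,\nabla\cdot)$, bound $\eta$ in $H^1$ by Lax--Milgram and in $L^2$ by a duality argument that integrates by parts to move the derivative off $v_h$ (using the $\nabla_x\partial_t a$ bound in \eqref{Cond-2}), and then combine the $O(h)$ Ritz error with the $O(h^{-1})$ inverse inequality. The only cosmetic difference is that the paper writes the quantity as $w_h-v_h$ with $w_h=A_h(t)^{-1}A_h(s)v_h$, whereas you work with it directly.
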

\begin{proof}
For any given $v_h\in S_h$, let $\varphi_h=A_h(s)v_h$ and $w_h=A_h(t)^{-1}\varphi_h$. Then
\begin{align*}
(A_h(t)w_h,\chi)=(\varphi_h,\chi)=(A_h(s)v_h,\chi),\quad\forall\,\chi\in S_h,
\end{align*}
which implies
\begin{align*}
(a(\cdot,t)\nabla w_h,\nabla \chi)
=(a(\cdot,s)\nabla v_h,\nabla \chi),\quad\forall\,\chi\in S_h.
\end{align*}
Consequently,
\begin{align*}
(a(\cdot,t)\nabla (w_h-v_h),\nabla \chi)
=((a(\cdot,s)-a(\cdot,t))\nabla v_h,\nabla \chi),\quad\forall\,\chi\in S_h .
\end{align*}
Let $\phi\in H^1_0(\Omega)$ be the weak solution of the elliptic problem
\begin{align}\label{PDE-phi-fem}
(a(\cdot,t)\nabla \phi,\nabla \xi)
=((a(\cdot,s)-a(\cdot,t))\nabla v_h,\nabla \xi),\quad\forall\,\xi\in H^1_0(\Omega) .
\end{align}
By Lax-Milgram theorem, $\phi$ satisfies the following \textit{a priori} estimate:
\begin{align*}
\|\phi\|_{H^1(\Omega)}\le c\|(a(\cdot,s)-a(\cdot,t))\nabla v_h\|_{L^2(\Omega)}
\le c|t-s|\|v_h\|_{H^1(\Omega)}.
\end{align*}
Thus, with the Ritz projection $R_h(t)$, cf. \eqref{eqn:Ritz}, we have $w_h-v_h=R_h(t)\phi$.

By the error estimate \eqref{eqn:Ritz-error} and the inverse inequality,
\begin{align*}
\|w_h-v_h-\phi\|_{L^2(\Omega)} &\le
ch\|\phi\|_{H^1(\Omega)}
\le ch|t-s|\|v_h\|_{H^1(\Omega)}\\
&\le c|t-s|\|v_h\|_{L^2(\Omega)}.
\end{align*}
Thus, the triangle inequality implies
\begin{align}\label{fem-wh-vh}
\|w_h-v_h\|_{L^2(\Omega)}
\le c|t-s|\|v_h\|_{L^2(\Omega)} +\|\phi\|_{L^2(\Omega)} .
\end{align}
For any $\varphi\in L^2(\Omega)$, let $\xi\in \dot H^2(\Omega)$ be the solution of the elliptic problem
\begin{align*}
-\nabla\cdot(a(\cdot,t)\nabla \xi) =\varphi.
\end{align*}
Then $\|\xi\|_{H^2(\Omega)}\leq c\|\varphi\|_{L^2(\Omega)}$.
By substituting $\xi$ into \eqref{PDE-phi-fem}, we obtain
\begin{align*}
|(\phi,\varphi)|
&=|(a(\cdot,t)\nabla \phi,\nabla \xi)|\\
&=|((a(\cdot,s)-a(\cdot,t))\nabla v_h,\nabla \xi)|\\
&=|(v_h,\nabla \cdot (a(\cdot,s)-a(\cdot,t))\nabla \xi)|\\
&\le c |t-s|\|v_h\|_{L^2(\Omega)} \|\xi\|_{H^2(\Omega)}\\
&\le c |t-s|\|v_h\|_{L^2(\Omega)} \|\varphi\|_{L^2(\Omega)}.
\end{align*}
This implies (via duality)
\begin{align*}
\|\phi\|_{L^2(\Omega)} = \sup_{\varphi\in L^2(\Omega)}\frac{|(\phi,\varphi)|}{\|\varphi\|_{L^2(\Omega)}}
\le c |t-s|\|v_h\|_{L^2(\Omega)} .
\end{align*}
Substituting the last inequality back into \eqref{fem-wh-vh}, we deduce
\begin{equation*}
  \|w_h-v_h\|_{L^2(\Omega)}\le c|t-s|\|v_h\|_{L^2(\Omega)}.
\end{equation*}
This completes the proof of Lemma \ref{lemma-fem-1}.
\end{proof}

\begin{remark}\label{rem:lemma-fem-1}
Note that the semidiscrete operator $A_h(t)$ is self-adjoint. Then Lemma \ref{lemma-fem-1}
together with a duality argument yields
\begin{align*}
\|(I-A_h(s)A_h(t)^{-1})v_h\|_{L^2(\Omega)}
\le c|t-s|\|v_h\|_{L^2(\Omega)} ,\quad\forall\, v_h\in S_h.
\end{align*}
Consequently,
\begin{align*}
\|( A_h(t)-A_h(s) )v_h\|_{L^2(\Omega)} &\le \|(I-A_h(s)A_h(t)^{-1})A_h(t)v_h\|_{L^2(\Omega)}\\
&\le c|t-s|\|A_h(t) v_h\|_{L^2(\Omega)} .
\end{align*}
Further, the interpolation between $\beta=0,1$ yields
\begin{align*}
\|A_h^\beta(t) (I-A_h(t)^{-1}A_h(s))v_h\|_{L^2(\Omega)} \le c|t-s|\|A_h^\beta(t) v_h\|_{L^2(\Omega)} .
\end{align*}
\end{remark}

The following result is the continuous analogue of Lemma \ref{lemma-fem-1}, and it is
independent interest.
\begin{corollary}\label{cor:basic-est}
Under conditions \eqref{Cond-1}-\eqref{Cond-2}, the following estimate holds:
\begin{align*}
\|(I-A(t)^{-1}A(s))v\|_{L^2(\Omega)} \le c|t-s|\|v\|_{L^2(\Omega)} ,\quad\forall\, v \in H_0^1(\Omega).
\end{align*}
\end{corollary}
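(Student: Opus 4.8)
The plan is to mirror the proof of Lemma \ref{lemma-fem-1}, except that at the continuous level the argument simplifies: there is no finite element subspace, hence no need for the Ritz projection or the inverse inequality. First I would give the operators their variational meaning. For $v\in H_0^1(\Omega)$, set $w=A(t)^{-1}A(s)v$, understood as the unique element of $H_0^1(\Omega)$ satisfying $(a(\cdot,t)\nabla w,\nabla\chi)=(a(\cdot,s)\nabla v,\nabla\chi)$ for all $\chi\in H_0^1(\Omega)$; this is well defined by the Lax--Milgram theorem, since condition \eqref{Cond-1} guarantees coercivity and the right-hand side is a bounded functional on $H_0^1(\Omega)$ whenever $v\in H_0^1(\Omega)$. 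Writing $\phi:=w-v=-(I-A(t)^{-1}A(s))v\in H_0^1(\Omega)$ and subtracting, $\phi$ solves
\begin{equation*}
  (a(\cdot,t)\nabla\phi,\nabla\xi)=((a(\cdot,s)-a(\cdot,t))\nabla v,\nabla\xi),\qquad\forall\,\xi\in H_0^1(\Omega),
\end{equation*}
so it remains to prove $\|\phi\|_{L^2(\Omega)}\le c|t-s|\|v\|_{L^2(\Omega)}$.

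The estimate would then follow by a duality argument. For arbitrary $\varphi\in L^2(\Omega)$, let $\xi\in\dot H^2(\Omega)$ solve $-\nabla\cdot(a(\cdot,t)\nabla\xi)=\varphi$; elliptic regularity on the convex polyhedral domain $\Omega$ gives $\|\xi\|_{H^2(\Omega)}\le c\|\varphi\|_{L^2(\Omega)}$. Testing the equation for $\phi$ with $\xi$, using the symmetry of $a$, and then integrating by parts to move the gradient off $v$ (the boundary term vanishes since $v\in H_0^1(\Omega)$) yields
\begin{equation*}
  (\phi,\varphi)=((a(\cdot,s)-a(\cdot,t))\nabla v,\nabla\xi)=-\big(v,\nabla\cdot[(a(\cdot,s)-a(\cdot,t))\nabla\xi]\big).
\end{equation*}
Bounding $|(\phi,\varphi)|\le\|v\|_{L^2(\Omega)}\,\|\nabla\cdot[(a(\cdot,s)-a(\cdot,t))\nabla\xi]\|_{L^2(\Omega)}$ and taking the supremum over $\varphi$ gives the claim, provided the divergence term is controlled by $c|t-s|\|\xi\|_{H^2(\Omega)}$.

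The only real work, and the place where condition \eqref{Cond-2} is used in full, is precisely this last bound. Expanding the divergence gives two contributions: the term $(a(\cdot,s)-a(\cdot,t)):\nabla^2\xi$, handled by $|a(x,s)-a(x,t)|\le c|t-s|$ (from $|\partial_t a|\le c$) together with $\|\nabla^2\xi\|_{L^2(\Omega)}\le c\|\xi\|_{H^2(\Omega)}$; and the term $[\nabla_x a(\cdot,s)-\nabla_x a(\cdot,t)]\cdot\nabla\xi$, which needs the \emph{mixed} bound $|\nabla_x\partial_t a(x,t)|\le c$ to conclude $|\nabla_x a(x,s)-\nabla_x a(x,t)|\le c|t-s|$ and hence control by $c|t-s|\|\nabla\xi\|_{L^2(\Omega)}$. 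This mixed hypothesis, which makes the spatial derivative of the temporal increment of the coefficient Lipschitz in time, is the main (indeed the only) obstacle; everything else is the routine coercivity and elliptic-regularity machinery already invoked for \eqref{eqn:basic-est} and in Lemma \ref{lemma-fem-1}. Collecting the two bounds gives $\|\nabla\cdot[(a(\cdot,s)-a(\cdot,t))\nabla\xi]\|_{L^2(\Omega)}\le c|t-s|\|\xi\|_{H^2(\Omega)}\le c|t-s|\|\varphi\|_{L^2(\Omega)}$, which completes the argument.
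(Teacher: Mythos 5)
Your proof is correct, but it follows a genuinely different route from the paper's. The paper proves the corollary by reduction to the discrete estimate of Lemma \ref{lemma-fem-1}: for $v\in\dot H^2(\Omega)$ it compares $A(t)^{-1}A(s)v$ with $A_h(t)^{-1}A_h(s)R_h(s)v$ using the identity $A_h(s)R_h(s)v=P_hA(s)v$ and the $O(h^2)$ Galerkin error estimate for $A_h(t)^{-1}P_h-A(t)^{-1}$, obtains the desired bound up to an $O(h^2)\|v\|_{H^2(\Omega)}$ remainder, lets $h\to0$, and finally passes from $\dot H^2(\Omega)$ to $H_0^1(\Omega)$ by density. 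You instead argue entirely at the continuous level: Lax--Milgram to define $w=A(t)^{-1}A(s)v$ (reading $A(\cdot)$ as $H_0^1(\Omega)\to H^{-1}(\Omega)$), then a duality argument with the $H^2$-regular problem $-\nabla\cdot(a(\cdot,t)\nabla\xi)=\varphi$, integration by parts onto $v$, and the bound $\|\nabla\cdot[(a(\cdot,s)-a(\cdot,t))\nabla\xi]\|_{L^2(\Omega)}\le c|t-s|\|\xi\|_{H^2(\Omega)}$, which correctly invokes both $|\partial_t a|\le c$ and the mixed bound $|\nabla_x\partial_t a|\le c$ from \eqref{Cond-2}. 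This duality computation is in fact the same one the paper performs inside the proof of Lemma \ref{lemma-fem-1}, but transplanted to the continuous setting, where, as you observe, the Ritz projection and inverse inequality drop out because your $\phi$ coincides exactly with $w-v$. What each approach buys: the paper's argument recycles a lemma it needs anyway and never has to discuss continuous-level well-posedness of $A(t)^{-1}A(s)$ on $H_0^1(\Omega)$; yours is self-contained, dispenses with the FEM machinery, the $h\to0$ limit and the density step, yields the estimate for all $v\in H_0^1(\Omega)$ directly, and makes explicit where each piece of \eqref{Cond-2} enters. Both arguments rest on the same $H^2$ elliptic regularity for the frozen-coefficient operator on the convex domain.
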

\begin{proof}
For any $v\in \dot H^2(\Omega)$, there holds $ A_h(s) R_h(s) v = P_h A(s) v $ \cite[equation (1.34), p. 11]{Thomee:2006}.
By the standard error estimates for Galerkin FEM,
\begin{align*}
  & \| A_h(t)^{-1}A_h(s) R_h(s) v  -  A(t)^{-1} A(s) v \|_{L^2(\Omega)} \\
=& \| (A_h(t)^{-1}P_h   -  A(t)^{-1}) A(s) v \|_{L^2(\Omega)}\\
 \le& ch^2 \| A(s) v  \|_{L^2(\Omega)}\leq ch^2\|v\|_{H^2(\Omega)}.
\end{align*}
Then by Lemma \ref{lemma-fem-1} and the triangle inequality, we deduce
\begin{align*}
\|(I-A(t)^{-1}A(s)) v\|_{L^2(\Omega)} &\leq \|(I -A_h(t)^{-1}A_h(s)  ) R_h(s)v\|_{L^2(\Omega)}  + ch^2 \|  v  \|_{H^2(\Omega)}\\
&\le c|t-s| \| R_h(s)v \|_{L^2(\Omega)} + ch^2 \|   v  \|_{H^2(\Omega)} \\
&\le c|t-s| \| v \|_{L^2(\Omega)}+c|t-s| \| R_h(s)v-v \|_{L^2(\Omega)} + ch^2 \|   v  \|_{H^2(\Omega)} \\
&\le c|t-s| \| v \|_{L^2(\Omega)}+(c|t-s| +c)h^2\| v \|_{H^2(\Omega)} ,\quad\forall\, v\in \dot H^2(\Omega).
\end{align*}
Then the assertion follows by letting $h \rightarrow 0$ and noting that the space $\dot H^2(\Omega)$ is dense in $H_0^1(\Omega)$.
\end{proof}

\begin{lemma}\label{lemma-fem-2}
Under conditions \eqref{Cond-1}-\eqref{Cond-2}, the following estimate holds:
\begin{align}
\| (R_h(t)-R_h(s))v\|_{L^2(\Omega)}
\le ch^2 |t-s| \|v\|_{H^2(\Omega)} ,\quad\forall\, v\in \dot H^2(\Omega).
\end{align}
\end{lemma}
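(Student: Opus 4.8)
The plan is to reduce everything to a single discrete elliptic problem satisfied by the difference $w_h := (R_h(t)-R_h(s))v$ and then to run an Aubin--Nitsche duality argument. Writing $R_t = R_h(t)v$ and $R_s = R_h(s)v$, I would first subtract the two defining relations \eqref{eqn:Ritz}, inserting $a(\cdot,t) = a(\cdot,s) + (a(\cdot,t)-a(\cdot,s))$ into the term involving $R_s$, to obtain the identity
\[
(a(\cdot,t)\nabla w_h, \nabla\chi) = ((a(\cdot,t)-a(\cdot,s))\nabla(v - R_h(s)v), \nabla\chi), \qquad \forall\,\chi\in S_h.
\]
The right-hand side is small in two independent ways: the coefficient difference $b := a(\cdot,t)-a(\cdot,s)$ satisfies $|b|\le c|t-s|$ by \eqref{Cond-2}, and $v - R_h(s)v$ is a Ritz error, controlled by \eqref{eqn:Ritz-error}.

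For the $L^2$ bound I would fix $\varphi\in L^2(\Omega)$ and let $\psi\in\dot H^2(\Omega)$ solve the dual problem $-\nabla\cdot(a(\cdot,t)\nabla\psi)=\varphi$, so that $\|\psi\|_{H^2(\Omega)}\le c\|\varphi\|_{L^2(\Omega)}$. Integration by parts gives $(w_h,\varphi)=(a(\cdot,t)\nabla\psi,\nabla w_h)$, and since $w_h\in S_h$ the definition \eqref{eqn:Ritz} lets me replace $\psi$ by $R_h(t)\psi$ in this pairing; by symmetry of $a(\cdot,t)$ it becomes $(a(\cdot,t)\nabla w_h,\nabla R_h(t)\psi)$. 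Applying the identity above with $\chi=R_h(t)\psi$ then yields
\[
(w_h,\varphi) = (b\,\nabla(v-R_h(s)v),\, \nabla R_h(t)\psi).
\]
I would then split $R_h(t)\psi = \psi + (R_h(t)\psi-\psi)$. The term containing $R_h(t)\psi-\psi$ is bounded directly by $c|t-s|\,\|\nabla(v-R_h(s)v)\|_{L^2(\Omega)}\,\|\nabla(R_h(t)\psi-\psi)\|_{L^2(\Omega)}$, and two applications of \eqref{eqn:Ritz-error} supply the required factor $h^2$.

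The crux is the remaining consistency term $(b\,\nabla(v-R_h(s)v),\nabla\psi)$, where a naive Cauchy--Schwarz estimate only produces a single power of $h$. Here I would integrate by parts to move the gradient off the Ritz error $e := v - R_h(s)v$; since $e\in H_0^1(\Omega)$ the boundary contribution vanishes and, using symmetry of $b$, the term equals $-(e,\nabla\cdot(b\nabla\psi))$. Now the $L^2$ Ritz bound $\|e\|_{L^2(\Omega)}\le ch^2\|v\|_{H^2(\Omega)}$ supplies the second power of $h$, while $\|\nabla\cdot(b\nabla\psi)\|_{L^2(\Omega)}\le c|t-s|\,\|\psi\|_{H^2(\Omega)}$ — this is exactly where the bounds on both $\partial_t a$ and $\nabla_x\partial_t a$ in \eqref{Cond-2} enter, through $b=\int_s^t\partial_\tau a\,\d\tau$. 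Combining the two contributions gives $|(w_h,\varphi)|\le ch^2|t-s|\,\|v\|_{H^2(\Omega)}\|\varphi\|_{L^2(\Omega)}$, and taking the supremum over $\varphi$ finishes the proof. I expect this integration-by-parts step — the device that trades one derivative on $e$ for the spatial regularity of $b$, so as to recover the full $h^2|t-s|$ rate rather than $h|t-s|$ — to be the main obstacle, and it is precisely the reason \eqref{Cond-2} must control the mixed derivative $\nabla_x\partial_t a$.
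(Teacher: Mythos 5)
Your proof is correct and follows essentially the same route as the paper's: an error equation for the difference of Ritz projections, followed by an Aubin--Nitsche duality argument whose crux is precisely the integration by parts that trades the gradient on the Ritz error for the spatial regularity of $b=a(\cdot,t)-a(\cdot,s)$, i.e., the bound on $\nabla_x\partial_t a$ in \eqref{Cond-2}. The only cosmetic differences are that the paper freezes the coefficient at $s$ rather than $t$ and routes the duality through an auxiliary continuous solution $\eta$ of the perturbed elliptic problem (bounding $\|\eta_h-\eta\|_{L^2(\Omega)}$ by the Ritz error estimate and $\|\eta\|_{L^2(\Omega)}$ by duality with the same integration by parts), whereas you apply the duality directly to $w_h$ and split $R_h(t)\psi=\psi+(R_h(t)\psi-\psi)$; the two decompositions yield term-by-term the same bounds.
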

\begin{proof}
By the definition of Ritz projection, cf. \eqref{eqn:Ritz},
the difference $\eta_h=R_h(t)v-R_h(s)v\in S_h$ satisfies
\begin{align*}
(a(\cdot,s)\nabla \eta_h,\nabla \chi)
=((a(\cdot,t)-a(\cdot,s))\nabla(v-R_h(t)v),\nabla \chi) ,
\quad\forall\,\chi\in S_h.
\end{align*}
Let $\eta\in H^1_0(\Omega)$ be the weak solution of the elliptic problem
\begin{align}\label{PDE-phi-fem2}
(a(\cdot,s)\nabla \eta,\nabla \xi)
=((a(\cdot,t)-a(\cdot,s))\nabla(v-R_h(t)v),\nabla \xi) ,\quad
\forall\,\xi\in H^1_0(\Omega).
\end{align}
By the definition of $R_h(s)$, cf. \eqref{eqn:Ritz}, $\eta_h=R_h(s)\eta$
and by the error estimate \eqref{eqn:Ritz-error}, there holds
\begin{align*}
\|\eta_h-\eta\|_{L^2(\Omega)} &\le ch \|\eta\|_{H^1(\Omega)}
\le ch\|(a(\cdot,t)-a(\cdot,s))\nabla(v-R_h(t)v)\|_{L^2(\Omega)}\\
&\le ch^2|t-s|\|v\|_{H^2(\Omega)}.
\end{align*}
The triangle inequality implies
\begin{align}\label{eqn:esti-eta}
\|\eta_h\|_{L^2(\Omega)}
\le ch^2|t-s|\|v\|_{H^2(\Omega)}
+\|\eta\|_{L^2(\Omega)} .
\end{align}
Next we use a duality argument to bound $\|\eta\|_{L^2(\Omega)}$.
For any $\varphi\in L^2(\Omega)$, let $\xi\in \dot H^2(\Omega)$ be the solution of the elliptic problem
\begin{align*}
-\nabla\cdot(a(\cdot,s)\nabla \xi) =\varphi .
\end{align*}
Upon substituting $\xi$ into \eqref{PDE-phi-fem2}, we obtain
\begin{align*}
|(\eta,\varphi)|
&=|(a(\cdot,s)\nabla \eta ,\nabla\xi)|
=|((a(\cdot,t)-a(\cdot,s))\nabla (v-R_h(t)v),\nabla \xi)| \nonumber  \\
&=|(v-R_h(t)v,\nabla \cdot ((a(\cdot,t)-a(\cdot,s))\nabla \xi)| \nonumber  \\
&\le c \|v-R_h(t)v\|_{L^2(\Omega)} |t-s| \|\xi\|_{H^2(\Omega)} \nonumber  \\
&\le ch^2 \|v\|_{H^2(\Omega)}  |t-s| \|\varphi\|_{L^2(\Omega)} ,
\end{align*}
which implies (via duality)
\begin{align*}
\|\eta\|_{L^2(\Omega)}
\le ch^2 \|v\|_{H^2(\Omega)}  |t-s|  .
\end{align*}
Substituting the above inequality into \eqref{eqn:esti-eta} yields Lemma \ref{lemma-fem-2}.
\end{proof}

\subsection{Semidiscrete scheme and error estimates}

By the discrete maximal $L^p$-regularity,
one can show the existence and uniqueness of a FEM solution $u_h(t)$.
We also have the following stability estimates. The proof is identical with that for Theorems
\ref{thm:reg-space1}--\ref{thm:reg-time}, using the estimates in
Section \ref{ssec:perturbation}, and hence it is omitted.
\begin{theorem}\label{thm:reg-semi}
Let conditions \eqref{Cond-1}-\eqref{Cond-2} be fulfilled, and $u_h$ be the solution to problem \eqref{eqn:fem}.
\begin{itemize}
\item[(i)] For $ u_0\in \dot H^\beta(\Omega)$, $0\leq \beta\leq 2$, and $f=0$, then
\begin{equation*}
 \|A_hu_h(t)\|_{L^2(\Omega)} \le ct^{-(1-\beta/2)\alpha} \|u_0\|_{\dot H^\beta(\Omega)}\quad\mbox{and}\quad    \| u_h'(t) \|_{L^2(\Omega)} \le c t^{-(1-\alpha\beta/2)} \| u_0  \|_{\dot H^\beta(\Omega)}. 
\end{equation*}
\item[(ii)] For $u_0=0$, $f\in C([0,T]; L^2(\Omega))$ and $\int_0^t  (t-s)^{\alpha-1}\| f'(s) \|_{L^2(\Omega)}\,\d s<\infty$, then
\begin{align*}
   &\|A_hu_h\|_{L^p(0,T;L^2(\Omega))}
+\|\Dal u_h\|_{L^p(0,T;L^2(\Omega))}
\le c\|f\|_{L^p(0,T;L^2(\Omega))},\\
   & \| u_h '(t) \|_{L^2(\Omega)} \le c t^{-(1-\alpha)}  \| f(0) \|_{L^2(\Omega)} +c\int_0^t (t-s)^{\alpha-1} \| f'(s)\|_{L^2(\Omega)} \,\d s,\\
 & \int_{0}^{t} (t-s)^{\alpha-1}\|A_hu_h(s)\|_{L^2(\Omega)}\d s \leq c \|f(0)\|_{L^2(\Omega)}+c\int_0^{t}(t-s)^{\alpha-1}\|f'(s)\|_{L^2(\Omega)}\d s.
\end{align*}
\item[(iii)] For $u_0=0$ and $f\in L^p(0,T;L^2(\Omega))$ with $p\in({2/\alpha},\infty)$, then
\begin{equation*}
  \|u_h(t)\|_{H^1(\Omega)}\leq c\|f\|_{L^p(0,t;L^2(\Omega))}.
\end{equation*}
\end{itemize}
\end{theorem}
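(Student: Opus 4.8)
The plan is to reproduce the proofs of Theorems \ref{thm:reg-space1}--\ref{thm:reg-time} line by line at the discrete level, with the self-adjoint positive-definite operator $A_h(t)$ on $S_h$ playing the role of $A(t)$, the discrete norm $\|A_h(\cdot)\,\cdot\,\|_{L^2(\Omega)}$ playing the role of the $H^2(\Omega)$ norm, and the discrete solution operators
\begin{equation*}
F_h(t;t_0) := \frac{1}{2\pi{\rm i}}\int_{\Gamma_{\theta,\delta}} e^{zt} z^{\alpha-1}(z^\alpha+A_h(t_0))^{-1}\,\d z, \qquad E_h(t;t_0):=\frac{1}{2\pi{\rm i}}\int_{\Gamma_{\theta,\delta}} e^{zt}(z^\alpha+A_h(t_0))^{-1}\,\d z
\end{equation*}
replacing $F(t;t_0)$ and $E(t;t_0)$. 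The first step is to record that $F_h,E_h$ satisfy the smoothing bounds of Lemma \ref{lem:smoothing} uniformly in $h$ and in the freezing time $t_0$. This is the crux: because \eqref{Cond-1} holds uniformly in $(x,t)$, the operator $A_h(t_0)$ is self-adjoint and positive definite on $S_h$ with spectral lower bound independent of $h$, so the resolvent estimate \eqref{eqn:resol} holds for $A_h(t_0)$ with a constant independent of $h$ and $t_0$; the contour-integral estimates of Lemma \ref{lem:smoothing} then transfer verbatim.

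The second ingredient is the discrete substitute for the basic estimate \eqref{eqn:basic-est}, which is already supplied by Remark \ref{rem:lemma-fem-1}, namely
\begin{equation*}
\|(A_h(t)-A_h(s))v_h\|_{L^2(\Omega)} \le c|t-s|\,\|A_h(t)v_h\|_{L^2(\Omega)}, \qquad \|A_h^\beta(t)(I-A_h(t)^{-1}A_h(s))v_h\|_{L^2(\Omega)} \le c|t-s|\,\|A_h^\beta(t)v_h\|_{L^2(\Omega)}.
\end{equation*}
These play exactly the role that \eqref{eqn:basic-est}, Corollary \ref{cor:basic-est} and interpolation played in the continuous arguments. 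With these in hand, the spatial bound $\|A_h u_h(t)\|_{L^2(\Omega)}\le ct^{-(1-\beta/2)\alpha}\|u_0\|_{\dot H^\beta(\Omega)}$ in part (i), together with existence and uniqueness, and the integral bound on $\int_0^t(t-s)^{\alpha-1}\|A_hu_h(s)\|_{L^2(\Omega)}\,\d s$ and the maximal $L^p$-regularity estimate in part (ii), follow by rewriting \eqref{eqn:fem2} with the coefficient frozen at $t_0$, using the discrete Duhamel representation
\begin{equation*}
u_h(t) = F_h(t;t_0)P_hu_0 + \int_0^t E_h(t-s;t_0)(A_h(t_0)-A_h(s))u_h(s)\,\d s + \int_0^t E_h(t-s;t_0)P_hf(s)\,\d s,
\end{equation*}
applying $A_h(t_0)$, and invoking the discrete smoothing bounds, the stability of $P_h$ on $\dot H^\beta(\Omega)$, and the generalized Gronwall inequality of Lemma \ref{lem:gronwall}, precisely as in Theorems \ref{thm:reg-space1}, \ref{thm:reg-space2} and Lemma \ref{lem:u-int}; the maximal $L^p$-regularity step uses the discrete maximal $L^p$-regularity of $A_h(t_0)$ in place of Lemma \ref{Lemma:indp}.

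The temporal bounds on $\|u_h'(t)\|_{L^2(\Omega)}$ in parts (i) and (ii) are then obtained by differentiating the discrete representation at $t=t_0$ and transcribing Lemmas \ref{lem:time-t} and \ref{lem:time-v2}; the only inputs are Lemma \ref{lem:smoothing}(ii)--(iii) (in their discrete form) and the perturbation bounds above, after adding the contribution $E_h(t;t_0)P_hf(0)+\int_0^t E_h(s;t_0)P_hf'(t-s)\,\d s$ from the source term. Part (iii) follows from the discrete maximal $L^p$-regularity just established together with the interpolation argument of Theorem \ref{thm:reg-time}(iii), using the norm equivalence $\|A_h^{1/2}(t)v_h\|_{L^2(\Omega)}\simeq\|v_h\|_{H^1(\Omega)}$, which is uniform in $h$ by \eqref{Cond-1}, to identify the relevant discrete interpolation space with $H^1(\Omega)$ and thereby obtain the embedding into $C([0,T];H^1(\Omega))$.

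I expect the only genuine obstacle to be the uniformity claimed in the first step: one must check that the resolvent estimate, and hence the entire family of smoothing bounds for $F_h$ and $E_h$, as well as the discrete maximal $L^p$-regularity constant, are independent of both $h$ and $t_0$. Once this uniform sectoriality of $A_h(t_0)$ is secured, the rest is a faithful, line-by-line transcription of the continuous proofs under the substitutions $A\to A_h$, $\|\cdot\|_{H^2(\Omega)}\to\|A_h(\cdot)\,\cdot\,\|_{L^2(\Omega)}$, $F\to F_h$, $E\to E_h$, which is why the detailed arguments may be omitted.
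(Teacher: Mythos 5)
Your proposal is correct and is essentially the paper's own argument: the paper omits the proof of Theorem \ref{thm:reg-semi}, stating only that it is identical to the proofs of Theorems \ref{thm:reg-space1}--\ref{thm:reg-time} with the perturbation estimates of Section \ref{ssec:perturbation} (Lemma \ref{lemma-fem-1} and Remark \ref{rem:lemma-fem-1}) replacing \eqref{eqn:basic-est}, which is exactly the substitution scheme $A\to A_h$, $\|\cdot\|_{H^2(\Omega)}\to\|A_h(\cdot)\,\cdot\,\|_{L^2(\Omega)}$, $F\to F_h$, $E\to E_h$ you describe. Your explicit identification of the uniformity requirements that make this transcription legitimate --- the $h$- and $t_0$-independent resolvent bound for the self-adjoint positive definite $A_h(t_0)$, the resulting $h$-uniform discrete maximal $L^p$-regularity, and the stability of $P_h$ needed for the initial-data term --- is precisely the content the paper leaves implicit.
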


Now we derive error estimates for the semidiscrete solution $u_h$.
Problem \eqref{eqn:fem} can be rewritten as
\begin{align*}
\begin{aligned}
&\Dal u_h(t) + A_h(t_0)u_h(t)
= P_hf(t)+ (A_h(t_0)-A_h(t))u_h(t) ,\quad t\in(0,T],\quad\mbox{with }u_h(0)=P_hu_0,
\end{aligned}
\end{align*}
whose solution is given by
\begin{align}\label{eqn:Sol-expr-uh}
u_h(t) =F_h(t;t_0)P_hu_0+ \int_0^t E_h(t-s; t_0)\Big(P_hf(s) +(A_h(t_0)-A_h(s))u_h(s)\Big) \d s ,
\end{align}
where the semidiscrete solution operators $F_h(t;t_0)$ and $E_h(t;t_0)$ are defined respectively by
\begin{align*}
F_h(t;t_0):=\frac{1}{2\pi {\rm i}}\int_{\Gamma_{\theta,\delta }}e^{zt} z^{\alpha-1} (z^\alpha+A_h(t_0) )^{-1}\, \d z \quad \mbox{and}\quad
E_h(t;t_0):=\frac{1}{2\pi {\rm i}}\int_{\Gamma_{\theta,\delta}}e^{zt}  (z^\alpha+A_h(t_0) )^{-1}\, \d z.
\end{align*}

Let $e_h=P_hu-u_h$. Then by \eqref{eqn:Sol-expr-u} and \eqref{eqn:Sol-expr-uh}, $e_h$ can be represented by
\begin{align}\label{FE-Err-expr2}
e_h(t)=& (P_hF(t;t_0)u_0-F_h(t;t_0)P_hu_0)
+ \int_0^t (P_hE(t-s; t_0)-E_h(t-s; t_0)P_h)f(s)  \d s \nonumber \\
&+ \int_0^t (P_hE(t-s; t_0)-E_h(t-s; t_0)P_h)(A(t_0)-A(s))u(s) \d s \nonumber \\
&+ \int_0^t E_h(t-s; t_0)\Big(P_h(A(t_0)-A(s))u(s) -(A_h(t_0)-A_h(s))u_h(s)\Big) \d s \nonumber \\
=&\!: \sum_{i=1}^4{\rm I}_i(t).
\end{align}

The terms ${\rm I}_1(t)$ and ${\rm I}_2(t)$ represent the errors for
the homogeneous and inhomogeneous problems with a time-independent operator $A(t_0)$,
respectively, which have been analyzed: \cite[Theorem 3.7]{JinLazarovZhou:SIAM2013} implies
\begin{equation}\label{eqn:I1t0}
  \|{\rm I}_1(t_0)\|_{L^2(\Omega)}
   \le ct_0^{-(1-\beta/2)\alpha} h^2\|u_0\|_{\dot H^\beta(\Omega)},\quad \beta\in[0,2],
\end{equation}
and by the argument in \cite{JinLazarovPasciakZhou:2015}, there holds (with $\ell_h=\log (1+1/h)$)
\begin{equation}\label{eqn:err-space-rhs}
  \|{\rm I}_2(t_0)\|_{L^2(\Omega)} \le c h^2\ell_h^2\|f\|_{L^\infty(0,T;L^2(\Omega))}.
\end{equation}

It remains to bound the two terms ${\rm I}_3(t)$ and ${\rm I}_4(t)$, which are given below.
We shall discuss the homogeneous and inhomogeneous problems separately.
\begin{lemma}\label{lem:I3}
Under conditions \eqref{Cond-1} and \eqref{Cond-2}, for $u_0\in L^2(\Omega)$ and $f=0$, for the term ${\rm I}_3(t)$, there holds
\begin{equation*}
  \|{\rm I}_3(t_0)\|_{L^2(\Omega)} \leq ch^2 \|u_0\|_{L^2(\Omega)}.
\end{equation*}
\end{lemma}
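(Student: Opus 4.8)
The plan is to estimate the term
\begin{align*}
{\rm I}_3(t_0)= \int_0^{t_0} (P_hE(t_0-s; t_0)-E_h(t_0-s; t_0)P_h)(A(t_0)-A(s))u(s) \,\d s,
\end{align*}
by exploiting the known $O(h^2)$ error bound for the semidiscrete approximation of the \emph{time-independent} solution operator $E(\cdot;t_0)$, combined with the Lipschitz-in-time estimate \eqref{eqn:basic-est} and the regularity of $u$ from Theorem \ref{thm:reg-space2}. First I would recall the standard error estimate for the resolvent of the frozen elliptic operator, namely that for each fixed $t_0$,
\begin{align*}
\|(P_hE(\tau;t_0)-E_h(\tau;t_0)P_h)w\|_{L^2(\Omega)} \le c h^2 \tau^{-1+\alpha}\|w\|_{L^2(\Omega)},
\end{align*}
which follows from the resolvent-based analysis in \cite{JinLazarovZhou:SIAM2013} applied to the time-independent operator $A(t_0)$; this is the analogue for $E$ of the bound \eqref{eqn:I1t0} used for ${\rm I}_1$. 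Applying this with $w=(A(t_0)-A(s))u(s)$ and using \eqref{eqn:basic-est} to get $\|(A(t_0)-A(s))u(s)\|_{L^2(\Omega)}\le c|t_0-s|\,\|u(s)\|_{H^2(\Omega)}$, I obtain a pointwise bound on the integrand.

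Next I would insert the regularity estimate $\|u(s)\|_{H^2(\Omega)}\le c s^{-\alpha}\|u_0\|_{L^2(\Omega)}$ from Theorem \ref{thm:reg-space2} (the case $\beta=0$), so that the integral becomes
\begin{align*}
\|{\rm I}_3(t_0)\|_{L^2(\Omega)} \le c h^2 \|u_0\|_{L^2(\Omega)} \int_0^{t_0} (t_0-s)^{-1+\alpha}\,(t_0-s)\, s^{-\alpha}\,\d s,
\end{align*}
where the factor $(t_0-s)^{-1+\alpha}$ comes from the smoothing of $E$, one factor $(t_0-s)$ comes from the Lipschitz bound \eqref{eqn:basic-est}, and $s^{-\alpha}$ from the solution regularity. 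The combined kernel $(t_0-s)^{\alpha}s^{-\alpha}$ is integrable: its integral over $(0,t_0)$ equals $t_0\,B(1+\alpha,1-\alpha)$, which is finite and bounded uniformly on $[0,T]$. This yields $\|{\rm I}_3(t_0)\|_{L^2(\Omega)}\le c h^2\|u_0\|_{L^2(\Omega)}$ as claimed, with no Gronwall argument needed since $u$ itself is already controlled by the data.

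The main obstacle I anticipate is justifying the $h^2$ resolvent error estimate for $E(\tau;t_0)$ uniformly in the freezing point $t_0$, together with the factor $\tau^{-1+\alpha}$, rather than simply citing it. The cleanest route is to write $P_hE(\tau;t_0)-E_h(\tau;t_0)P_h$ as a contour integral of the resolvent difference $(z^\alpha+A(t_0))^{-1}$ versus its discrete counterpart, and to invoke the elliptic finite element error bound $\|(A(t_0)-z^\alpha)^{-1}P_h - (A_h(t_0)-\ldots)^{-1}P_h\|\le ch^2$ that holds uniformly in $z$ on the sector, as established for the frozen operator in \cite{JinLazarovZhou:SIAM2013}; the constant is uniform in $t_0$ because conditions \eqref{Cond-1}--\eqref{Cond-2} give uniform ellipticity. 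One should also verify that the regularity bound from Theorem \ref{thm:reg-space2} holds for $u_0\in L^2(\Omega)$ with the integrable rate $s^{-\alpha}$, which is exactly the $\beta=0$ endpoint of that theorem.
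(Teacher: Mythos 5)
Your proposal follows essentially the same route as the paper's proof: represent ${\rm I}_3(t_0)$ through the contour integrals defining $E(\cdot;t_0)$ and $E_h(\cdot;t_0)$, invoke the $z$-uniform $O(h^2)$ resolvent error estimate for the frozen operator $A(t_0)$ (the paper cites Fujita--Suzuki for exactly this, with constant uniform in $t_0$ by \eqref{Cond-1}--\eqref{Cond-2}), combine it with the Lipschitz bound \eqref{eqn:basic-est} and the regularity $\|u(s)\|_{H^2(\Omega)}\le cs^{-\alpha}\|u_0\|_{L^2(\Omega)}$ from Theorem \ref{thm:reg-space2}, and integrate directly with no Gronwall step. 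One correction is needed, however: your claimed intermediate bound $\|(P_hE(\tau;t_0)-E_h(\tau;t_0)P_h)w\|_{L^2(\Omega)}\le ch^2\tau^{-1+\alpha}\|w\|_{L^2(\Omega)}$ is too strong and is not what your own justification delivers. The contour argument you describe --- the uniform $ch^2$ resolvent error multiplied by $\int_{\Gamma_{\theta,\delta}}|e^{z\tau}|\,|\d z|$ with $\delta=1/\tau$ --- yields $ch^2\tau^{-1}$, not $ch^2\tau^{\alpha-1}$: for data that is merely in $L^2(\Omega)$ the FEM error for $E$ loses the $\tau^{\alpha-1}$ smoothing of $E$ itself entirely (this loss is precisely why the inhomogeneous estimate \eqref{eqn:err-space-rhs} carries logarithmic factors), and the $\tau^{\alpha-1}$ version would be false already in the limiting case $\alpha\to1$, where it would assert a uniform-in-time $O(h^2)$ bound for the semigroup error with $L^2$ initial data. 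Fortunately the slip is harmless because your argument has slack: with the correct exponent the integrand becomes $(t_0-s)^{-1}\cdot(t_0-s)\cdot s^{-\alpha}=s^{-\alpha}$, which is integrable on $(0,t_0)$, so the conclusion $\|{\rm I}_3(t_0)\|_{L^2(\Omega)}\le ch^2\|u_0\|_{L^2(\Omega)}$ still follows, and after this correction your proof coincides with the one in the paper.
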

\begin{proof}
By the definitions of the operators $E(t;t_0)$ and $E_h(t;t_0)$, we have
\begin{align*}
  \|{\rm I}_3(t_0)\|_{L^2(\Omega)}
\le& c\int_0^{t_0}\int_{\Gamma_{\theta,\delta}}|e^{z(t_0-s)}  |
\big\|(z^\alpha+A(t_0) )^{-1}-(z^\alpha+A_h(t_0) )^{-1}P_h\big\|_{L^2(\Omega)}
\|(A(t_0)-A(s))u(s) \|_{} \, |\d z |  \d s.
\end{align*}
By condition \eqref{Cond-1}, for any $z\in \Gamma_{\theta,\delta}$, we have \cite[p. 820]{FujitaSuzuki:1991}
\begin{equation*}
  \|(z^\alpha+A(t_0) )^{-1}-(z^\alpha+A_h(t_0) )^{-1}P_h\big\|_{} \leq ch^2,
\end{equation*}
where the constant $c$ is independent of $z$. Meanwhile, condition \eqref{Cond-2} implies
\begin{equation*}
  \|(A(t_0)-A(s))u(s) \|_{L^2(\Omega)} \leq c|t_0-s|\|u(s)\|_{H^2(\Omega)}.
\end{equation*}
Thus by Theorem \ref{thm:reg-space2},
\begin{align}
  \|{\rm I}_3(t_0)\|_{L^2(\Omega)} & \leq
ch^{2} \int_0^{t_0}(t_0-s)^{-1} (t_0-s) \|u(s)\|_{H^2(\Omega)} \d s\le ch^{2} \int_0^{t_0} \|u(s)\|_{H^{2}(\Omega)} \d s \nonumber\\
&= ch^{2} \int_0^{t_0} s^{-\alpha}\|u_0\|_{L^2(\Omega)} \d s
\le c h^{2}\|u_0\|_{L^2(\Omega)}.\label{eqn:est-I3last}
\end{align}
This completes the proof of the lemma.
\end{proof}

\begin{lemma}\label{lem:I4}
Under conditions \eqref{Cond-1} and \eqref{Cond-2}, for $u_0\in L^2(\Omega)$ and $f=0$, for the term ${\rm I}_4(t)$, there holds
\begin{equation*}
  \|{\rm I}_4(t_0)\|_{L^2(\Omega)}\leq ch^2\|u_0\|_{L^2(\Omega)} + c\int_0^{t_0} \|e_h(s)\|_{L^2(\Omega)}\d s.
\end{equation*}
\end{lemma}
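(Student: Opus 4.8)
The plan is to analyze the term
\[
{\rm I}_4(t_0)=\int_0^{t_0} E_h(t_0-s; t_0)\Big(P_h(A(t_0)-A(s))u(s) -(A_h(t_0)-A_h(s))u_h(s)\Big) \d s
\]
by inserting the semidiscrete quantities $A_h(\cdot)P_h u$ as intermediaries, so that the integrand splits into a ``consistency'' piece measuring how well $A_h$ and $P_h$ reproduce $A$ on the exact solution, and an ``error'' piece involving $e_h=P_hu-u_h$ directly. Concretely, I would write the integrand as
\[
\big[P_h(A(t_0)-A(s))u(s)-(A_h(t_0)-A_h(s))P_hu(s)\big]
+\big[(A_h(t_0)-A_h(s))(P_hu(s)-u_h(s))\big],
\]
and bound the two resulting integrals separately. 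The second integral is the one that produces the Gronwall-type term: applying the smoothing bound $\|A_h(t_0)E_h(t_0-s;t_0)\|\le c(t_0-s)^{-1}$ from Lemma \ref{lem:smoothing}(ii) (transferred to the discrete operators) together with the discrete perturbation estimate from Remark \ref{rem:lemma-fem-1}, namely $\|(A_h(t_0)-A_h(s))e_h(s)\|_{L^2(\Omega)}\le c|t_0-s|\,\|A_h(t_0)e_h(s)\|_{L^2(\Omega)}$, kills the $(t_0-s)^{-1}$ singularity and leaves $c\int_0^{t_0}\|A_h(t_0)e_h(s)\|_{L^2(\Omega)}\d s$. I would then need to relate $\|A_h(t_0)e_h(s)\|$ back to $\|e_h(s)\|_{L^2(\Omega)}$; this is where one must be careful, since $A_h$ is unbounded in $h$. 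The cleanest route is to keep $A_hE_h$ paired together as a single bounded operator rather than estimating $A_h$ alone, i.e.\ write $E_h(t_0-s;t_0)(A_h(t_0)-A_h(s))e_h(s)=A_h(t_0)E_h(t_0-s;t_0)\cdot A_h(t_0)^{-1}(A_h(t_0)-A_h(s))e_h(s)$ and use $\|(I-A_h(t_0)^{-1}A_h(s))e_h(s)\|_{L^2}\le c|t_0-s|\|e_h(s)\|_{L^2}$ from Lemma \ref{lemma-fem-1}, yielding directly $c\int_0^{t_0}\|e_h(s)\|_{L^2(\Omega)}\d s$ after the singularity cancels.

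For the consistency integral, the idea is to replace $P_h(A(t_0)-A(s))u(s)$ with the discrete analogue acting on the Ritz projection and to control the mismatch through the approximation estimates. I would use the relation $A_h(\tau)R_h(\tau)v=P_hA(\tau)v$ (cited in Corollary \ref{cor:basic-est}) to convert $P_h(A(t_0)-A(s))u(s)$ into $A_h(t_0)R_h(t_0)u(s)-A_h(s)R_h(s)u(s)$, and then compare this with $(A_h(t_0)-A_h(s))P_hu(s)$. The difference decomposes into terms of the form $A_h(t_0)(R_h(t_0)-P_h)u(s)$ and $A_h(s)(R_h(s)-P_h)u(s)$, plus the time-difference of the Ritz projection $(R_h(t_0)-R_h(s))$, for which Lemma \ref{lemma-fem-2} supplies the crucial bound $\|(R_h(t_0)-R_h(s))u(s)\|_{L^2}\le ch^2|t_0-s|\|u(s)\|_{H^2}$. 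Pairing each such term with $E_h(t_0-s;t_0)$ (or $A_h E_h$) and using the $H^2$-regularity $\|u(s)\|_{H^2(\Omega)}\le cs^{-\alpha}\|u_0\|_{L^2(\Omega)}$ from Theorem \ref{thm:reg-space2}, the factor $|t_0-s|$ cancels the $(t_0-s)^{-1}$ singularity and the $s^{-\alpha}$ integrates to a finite constant, giving an $h^2\|u_0\|_{L^2(\Omega)}$ bound.

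The main obstacle is the bookkeeping in the consistency integral: I must arrange the insertion of $R_h$ versus $P_h$ so that every occurrence of an unbounded $A_h$ is absorbed into a bounded operator $A_hE_h$ or cancelled against an $O(h^2)$ Ritz/projection error, while simultaneously ensuring that the time-regularity of the coefficient (encoded through $|t_0-s|$ and through Lemma \ref{lemma-fem-2}) tames the kernel singularity. Once both integrals are bounded, combining them gives exactly
\[
\|{\rm I}_4(t_0)\|_{L^2(\Omega)}\leq ch^2\|u_0\|_{L^2(\Omega)} + c\int_0^{t_0} \|e_h(s)\|_{L^2(\Omega)}\d s,
\]
as claimed.
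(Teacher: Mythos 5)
Your proposal is correct and takes essentially the same route as the paper's own proof: your insertion of $(A_h(t_0)-A_h(s))P_hu(s)$ as intermediary, combined with the identity $P_hA(\cdot)=A_h(\cdot)R_h(\cdot)$, reproduces exactly the paper's splitting into ${\rm I}_{4,1}(t_0)+{\rm I}_{4,2}(t_0)$, and the error piece is handled identically (keeping $A_h(t_0)E_h$ together and applying Lemma \ref{lemma-fem-1} to $(I-A_h(t_0)^{-1}A_h(s))e_h(s)$). The consistency piece is likewise regrouped as in the paper into the Ritz-difference term (Lemma \ref{lemma-fem-2}) plus an operator-difference term acting on $(R_h(s)-P_h)u(s)$ (Lemma \ref{lemma-fem-1} again), with Theorem \ref{thm:reg-space2} supplying the $s^{-\alpha}$ regularity, just as in the paper.
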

\begin{proof}
Let $e_h=P_hu-u_h$. Using the identity $P_hA(s)=A_h(s)R_h(s)$ \cite[(1.34), p. 11]{Thomee:2006} and the triangle inequality, we derive
\begin{align*}
   \|{\rm I}_4(t_0)\|_{L^2(\Omega)}
&=\bigg\|\int_0^{t_0} E_h(t_0-s; t_0)\Big((A_h(t_0)R_h(t_0)-A_h(s)R_h(s))u(s) -(A_h(t_0)-A_h(s))u_h(s)\Big) \d s \bigg\|_{L^2(\Omega)} \\
&\le \bigg\|\int_0^{t_0} E_h(t_0-s; t_0)(A_h(t_0)-A_h(s))e_h(s)  \d s \bigg\|_{L^2(\Omega)} \\
&\quad +\bigg\|\int_0^{t_0} E_h(t_0-s; t_0)\Big(A_h(t_0)(R_h(t_0)-P_h)u(s) -A_h(s)(R_h(s)-P_h)u(s)\Big) \d s \bigg\|_{L^2(\Omega)} \\
&=: {\rm I}_{4,1}(t_0)+{\rm I}_{4,2}(t_0).
\end{align*}
For the term ${\rm I}_{4,1}(t_0)$, by Lemmas \ref{lem:smoothing}(ii) and \ref{lemma-fem-1}, we have
\begin{align*}
{\rm I}_{4,1}(t_0)&=\bigg\|\int_0^{t_0} A_h(t_0)E_h(t_0-s; t_0)(I-A_h(t_0)^{-1}A_h(s))e_h(s)  \d s \bigg\|_{L^2(\Omega)} \\
&\leq \int_0^{t_0}\|A_h(t_0)E_h(t_0-s; t_0)\| \|(I-A_h(t_0)^{-1}A_h(s))e_h(s)\|_{L^2(\Omega)} \d s\\
&\le c\int_0^{t_0} (t_0-s)^{-1}  (t_0-s) \|e_h(s)\|_{L^2(\Omega)}\d s= c\int_0^{t_0} \|e_h(s)\|_{L^2(\Omega)}\d s.
\end{align*}
For the term ${\rm I}_{4,2}(t_0)$, by the triangle inequality, we further split it into
\begin{align*}
{\rm I}_{4,2}(t_0) &\le
\bigg\|\int_0^{t_0} E_h(t_0-s; t_0) A_h(t_0)(R_h(t_0)-R_h(s))u(s) \d s \bigg\|_{L^2(\Omega)}  \\
&\quad+\bigg\| \int_0^{t_0} E_h(t_0-s; t_0)(A_h(t_0)-A_h(s))(R_h(s)-P_h)u(s)\Big) \d s \bigg\|_{L^2(\Omega)}
=:{\rm I}_{4,2}'(t_0)+{\rm I}_{4,2}''(t_0). 
\end{align*}
Now by Lemmas \ref{lem:smoothing}(ii) and \ref{lemma-fem-2} and Theorem \ref{thm:reg-space2}, we bound ${\rm I}_{4,2}'(t_0)$ by
\begin{align*}
 {\rm I}_{4,2}'(t_0) & \leq \int_0^{t_0}\|E_h(t_0-s;t_0)A_h(t_0)\|\|(R_h(t_0)-R_h(s))u(s)\|_{L^2(\Omega)}\d s\\
   &\leq c\int_0^{t_0}(t_0-s)^{-1}(t_0-s)h^2\|u(s)\|_{H^2(\Omega)}\d s \\
   & \le ch^2\int_0^{t_0}s^{-\alpha}\|u_0\|_{L^2(\Omega)} \d s \leq ch^2\|u_0\|_{L^2(\Omega)}.
\end{align*}
Likewise, by Lemma \ref{lemma-fem-1} and Theorem \ref{thm:reg-space2}, we bound $I_{4,2}''(t_0)$ by
\begin{align*}
  {\rm I}_{4,2}''(t_0) & = \bigg\| \int_0^{t_0} A_h(t_0)E_h(t_0-s; t_0)(I-A_h(t_0)^{-1}A_h(s))(R_h(s)-P_h)u(s)\Big) \d s \bigg\|_{L^2(\Omega)}\\
    & \leq  \int_0^{t_0}\| A_h(t_0)E_h(t_0-s; t_0)\|\|(I-A_h(t_0)^{-1}A_h(s))(R_h(s)-P_h)u(s)\|_{L^2(\Omega)} \d s \\
    &\leq c\int_0^{t_0} (t_0-s)^{-1}(t_0-s)\|(R_h(s)-P_h)u(s)\|_{L^2(\Omega)} \d s \\
    &\le ch^2\int_0^{t_0}\|u(s)\|_{H^2(\Omega)}  \d s \le ch^2\int_0^{t_0}s^{-\alpha}\|u_0\|_{L^2(\Omega)} \d s
    \le ch^2\|u_0\|_{L^2(\Omega)}.
\end{align*}
The desired assertion follows by combining the preceding estimates.
\end{proof}

Now we can state the main result of this part, i.e., error estimate on the
semidiscrete solution $u_h$.
\begin{theorem}\label{thm:err-space1}
Under conditions \eqref{Cond-1} and \eqref{Cond-2}, for $u_0\in L^2(\Omega)$ and $f=0$, there holds
\begin{equation*}
  \|u(t)-u_h(t)\|_{L^2(\Omega)}\leq ch^2t^{-\alpha}\|u_0\|_{L^2(\Omega)}.
\end{equation*}
\end{theorem}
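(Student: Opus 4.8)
The plan is to decompose the full error through the $L^2$-projection $P_h$ and to control the two resulting pieces separately. First I would write
\[
u(t)-u_h(t) = \big(u(t)-P_hu(t)\big) + \big(P_hu(t)-u_h(t)\big) = (I-P_h)u(t) + e_h(t),
\]
with $e_h=P_hu-u_h$ as in \eqref{FE-Err-expr2}. The projection part is immediate: the approximation property of $P_h$ with $q=2$ gives $\|(I-P_h)u(t)\|_{L^2(\Omega)}\le ch^2\|u(t)\|_{H^2(\Omega)}$, and Theorem \ref{thm:reg-space2} with $\beta=0$ supplies $\|u(t)\|_{H^2(\Omega)}\le ct^{-\alpha}\|u_0\|_{L^2(\Omega)}$, so this term already has the desired form $ch^2t^{-\alpha}\|u_0\|_{L^2(\Omega)}$.

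The substance lies in bounding $e_h(t)$, and here I would invoke the representation $e_h=\sum_{i=1}^4{\rm I}_i$ from \eqref{FE-Err-expr2}. Since $f=0$, the term ${\rm I}_2$ vanishes. The term ${\rm I}_1$ is the homogeneous error for the frozen operator $A(t_0)$, controlled by \eqref{eqn:I1t0} with $\beta=0$, which yields $\|{\rm I}_1(t_0)\|_{L^2(\Omega)}\le ch^2t_0^{-\alpha}\|u_0\|_{L^2(\Omega)}$. The terms ${\rm I}_3$ and ${\rm I}_4$ are precisely the quantities estimated in Lemmas \ref{lem:I3} and \ref{lem:I4}. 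Collecting these bounds, and using $t_0\le T$ to absorb the $h$-only contributions from ${\rm I}_3,{\rm I}_4$ into $ch^2t_0^{-\alpha}\|u_0\|_{L^2(\Omega)}$ (via $1\le T^\alpha t_0^{-\alpha}$), I would arrive at the closed inequality
\[
\|e_h(t_0)\|_{L^2(\Omega)} \le ch^2t_0^{-\alpha}\|u_0\|_{L^2(\Omega)} + c\int_0^{t_0}\|e_h(s)\|_{L^2(\Omega)}\,\d s ,\quad t_0\in(0,T].
\]

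The final step is to remove the integral term, which matches the hypothesis of the generalized Gronwall inequality (Lemma \ref{lem:gronwall}): taking its forcing exponent to be $1-\alpha>0$ (so that the singular factor reads $t^{-1+(1-\alpha)}=t^{-\alpha}$), its kernel exponent to be $1$ (since the convolution kernel is the constant $(t-s)^0$), and $a=ch^2\|u_0\|_{L^2(\Omega)}$, Lemma \ref{lem:gronwall} gives $\|e_h(t)\|_{L^2(\Omega)}\le ch^2t^{-\alpha}\|u_0\|_{L^2(\Omega)}$. Combining this with the projection-error bound through the triangle inequality yields the stated estimate. I do not anticipate a genuine obstacle, since the hard analytic work is already encapsulated in Lemmas \ref{lem:I3}--\ref{lem:I4} and Theorem \ref{thm:reg-space2}; the only points demanding care are matching the singularities correctly when invoking the fractional Gronwall lemma, and verifying that the integral term arising from Lemma \ref{lem:I4} has the non-singular kernel needed for Lemma \ref{lem:gronwall} to apply with kernel exponent $\beta=1$.
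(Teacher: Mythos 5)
Your proposal is correct and follows essentially the same route as the paper's proof: the same decomposition $u-u_h=(I-P_h)u+e_h$, the same splitting \eqref{FE-Err-expr2} with ${\rm I}_2=0$, the bounds \eqref{eqn:I1t0} and Lemmas \ref{lem:I3}--\ref{lem:I4}, the generalized Gronwall inequality of Lemma \ref{lem:gronwall} applied exactly as you describe, and the final triangle inequality via the $P_h$ approximation property and Theorem \ref{thm:reg-space2}. Your explicit bookkeeping of the Gronwall exponents (forcing $1-\alpha$, kernel $\beta=1$) and the absorption of the $h$-only terms via $1\le T^\alpha t_0^{-\alpha}$ are just details the paper leaves implicit.
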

\begin{proof}
Substituting \eqref{eqn:I1t0} and Lemmas \ref{lem:I3} and \ref{lem:I4} into \eqref{FE-Err-expr2} yields
\begin{equation*}
\|P_hu(t_0)-u_h(t_0)\|_{L^2(\Omega)}
\le c t_0^{-\alpha}  h^2\|u_0\|_{L^2(\Omega)}
+ c\int_0^{t_0} \|P_hu(s)-u_h(s)\|_{L^2(\Omega)}\d s,\quad\forall\, t_0\in(0,T].
\end{equation*}
By Gronwall's inequality from Lemma \ref{lem:gronwall}, we obtain
\begin{equation*}
\|P_hu(t)-u_h(t)\|_{L^2(\Omega)}
\le c t^{-\alpha}h^2\|u_0\|_{L^2(\Omega)} ,\quad\forall\, t\in(0,T].
\end{equation*}
By the approximation property of $P_h$ and Theorem \ref{thm:reg-space2}, we have
\begin{equation*}
  \|u(t_0)-P_hu(t_0)\|_{L^2(\Omega)}\leq ch^2\|u(t_0)\|_{H^2(\Omega)}\leq ct_0^{-\alpha}h^2\|u_0\|_{L^2(\Omega)}.
\end{equation*}
The last two estimates together imply the desired result.
\end{proof}

A similar error estimate holds for the inhomogeneous problem.
\begin{theorem}\label{thm:err-space2}
Under conditions \eqref{Cond-1} and \eqref{Cond-2}, for $u_0=0$ and $f\in L^\infty(0,T;L^2(\Omega))$, there holds
\begin{equation*}
  \|u(t)-u_h(t)\|_{L^2(\Omega)}\leq ch^2\ell_h^2\|f\|_{L^\infty(0,t;L^2(\Omega))},\quad \mbox{with } \ell_h = \log(1+1/h).
\end{equation*}
\end{theorem}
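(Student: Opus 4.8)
The plan is to mirror the proof of Theorem~\ref{thm:err-space1}, splitting the error as $u-u_h=(I-P_h)u+e_h$ with $e_h=P_hu-u_h$, and replacing the pointwise smoothing bounds used for the homogeneous problem by the $L^p$-in-time regularity of Theorem~\ref{thm:reg-space1} together with the pointwise $H^1$ bound of Theorem~\ref{thm:reg-time}(iii). For $e_h$ I would use the representation \eqref{FE-Err-expr2}: since $u_0=0$ the term ${\rm I}_1(t_0)$ vanishes, while ${\rm I}_2(t_0)$ is already controlled by \eqref{eqn:err-space-rhs} and contributes the factor $ch^2\ell_h^2\|f\|_{L^\infty(0,t_0;L^2(\Omega))}$.

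Next I would prove the inhomogeneous analogues of Lemmas~\ref{lem:I3} and~\ref{lem:I4}. Their proofs carry over verbatim—the uniform resolvent error $\|(z^\alpha+A(t_0))^{-1}-(z^\alpha+A_h(t_0))^{-1}P_h\|\le ch^2$, the contour estimate, \eqref{eqn:basic-est}, and Lemmas~\ref{lemma-fem-1}, \ref{lemma-fem-2}, \ref{lem:smoothing}(ii)—yielding
\begin{equation*}
\|{\rm I}_3(t_0)\|_{L^2(\Omega)}\le ch^2\!\int_0^{t_0}\!\|u(s)\|_{H^2(\Omega)}\d s,\qquad \|{\rm I}_4(t_0)\|_{L^2(\Omega)}\le ch^2\!\int_0^{t_0}\!\|u(s)\|_{H^2(\Omega)}\d s+c\!\int_0^{t_0}\!\|e_h(s)\|_{L^2(\Omega)}\d s.
\end{equation*}
The one new ingredient is that the time integral is no longer controlled by the integrable singularity $s^{-\alpha}$ but by H\"older's inequality and Theorem~\ref{thm:reg-space1}: for any fixed $1/\alpha<p<\infty$,
\begin{equation*}
\int_0^{t_0}\|u(s)\|_{H^2(\Omega)}\d s\le t_0^{1-1/p}\|u\|_{L^p(0,t_0;H^2(\Omega))}\le c\|f\|_{L^p(0,t_0;L^2(\Omega))}\le c\|f\|_{L^\infty(0,t_0;L^2(\Omega))}.
\end{equation*}
Collecting the four terms in \eqref{FE-Err-expr2} then gives $\|e_h(t_0)\|_{L^2(\Omega)}\le ch^2\ell_h^2\|f\|_{L^\infty(0,t_0;L^2(\Omega))}+c\int_0^{t_0}\|e_h(s)\|_{L^2(\Omega)}\d s$, and Gronwall's inequality (Lemma~\ref{lem:gronwall} with $\beta=1$) absorbs the last term to yield $\|P_hu(t_0)-u_h(t_0)\|_{L^2(\Omega)}\le ch^2\ell_h^2\|f\|_{L^\infty(0,t_0;L^2(\Omega))}$.

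The main obstacle is the projection error $\|(I-P_h)u(t_0)\|_{L^2(\Omega)}$: in contrast to the homogeneous case, Theorem~\ref{thm:reg-space1} furnishes no pointwise $\dot H^2$ bound on $u(t_0)$, and the pointwise $H^1$ bound of Theorem~\ref{thm:reg-time}(iii) alone would give only the suboptimal rate $O(h)$. I would circumvent this by exploiting fractional spatial smoothing. Interpolating the two bounds of Lemma~\ref{lem:smoothing}(ii) gives $\|A(t_0)^{\gamma}E(\tau;t_0)\|\le c\tau^{(1-\gamma)\alpha-1}$ for $\gamma\in[0,1]$; applying $A(t_0)^{1-\varepsilon/2}$ to the representation \eqref{eqn:Sol-expr-u} (with $u_0=0$), estimating the coefficient-difference term through \eqref{eqn:basic-est} and the already established $H^2$ regularity, shows $\|u(t_0)\|_{\dot H^{2-\varepsilon}(\Omega)}\le c\varepsilon^{-1}\|f\|_{L^\infty(0,t_0;L^2(\Omega))}$ for small $\varepsilon>0$. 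Combining this with the interpolated approximation property $\|(I-P_h)v\|_{L^2(\Omega)}\le ch^{2-\varepsilon}\|v\|_{\dot H^{2-\varepsilon}(\Omega)}$ yields $\|(I-P_h)u(t_0)\|_{L^2(\Omega)}\le c\varepsilon^{-1}h^{2-\varepsilon}\|f\|_{L^\infty(0,t_0;L^2(\Omega))}$, and choosing $\varepsilon\sim1/\ell_h$ (so that $h^{-\varepsilon}\sim1$ and $\varepsilon^{-1}\sim\ell_h$) produces $\|(I-P_h)u(t_0)\|_{L^2(\Omega)}\le ch^2\ell_h\|f\|_{L^\infty(0,t_0;L^2(\Omega))}$. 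Adding this to the bound on $e_h$ and noting that $\ell_h$ is dominated by $\ell_h^2$ completes the proof. The delicate points are the careful bookkeeping of the logarithmic factors and verifying that the $\varepsilon^{-1}$ blow-up of the fractional regularity constant is compensated exactly by $h^{-\varepsilon}$ under the optimal choice $\varepsilon\sim1/\ell_h$.
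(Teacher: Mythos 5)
Your proposal is correct, and its core is exactly the paper's argument: split off $e_h=P_hu-u_h$, use the representation \eqref{FE-Err-expr2} with ${\rm I}_1=0$, bound ${\rm I}_2$ by \eqref{eqn:err-space-rhs}, prove the inhomogeneous analogues of Lemmas~\ref{lem:I3} and~\ref{lem:I4} whose time integrals are closed by H\"older's inequality and the maximal $L^p$-regularity of Theorem~\ref{thm:reg-space1} (this is precisely how the paper replaces the $s^{-\alpha}$ singularity of the homogeneous case), and finish with Gronwall. Where you genuinely go beyond the paper is the projection error $\|(I-P_h)u(t_0)\|_{L^2(\Omega)}$: the paper's proof merely says it is ``similar to Theorem~\ref{thm:err-space1}'' and is silent on this term, yet the step used there, $\|(I-P_h)u(t_0)\|_{L^2(\Omega)}\le ch^2\|u(t_0)\|_{H^2(\Omega)}$, is indeed unavailable here because $f\in L^\infty(0,T;L^2(\Omega))$ yields only $L^p$-in-time (not pointwise) $\dot H^2$ regularity---you correctly identify this as the main obstacle. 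Your remedy is sound: interpolating Lemma~\ref{lem:smoothing}(ii) gives $\|A(t_0)^{1-\varepsilon/2}E(t;t_0)\|\le ct^{\varepsilon\alpha/2-1}$, the coefficient-difference term in \eqref{eqn:Sol-expr-u} is even uniformly bounded by \eqref{eqn:basic-est} together with the integrated $H^2$ bound, so $\|u(t_0)\|_{\dot H^{2-\varepsilon}(\Omega)}\le c\varepsilon^{-1}\|f\|_{L^\infty(0,t_0;L^2(\Omega))}$, and the choice $\varepsilon\sim 1/\ell_h$ turns $h^{2-\varepsilon}\varepsilon^{-1}$ into $ch^2\ell_h$, absorbed into $\ell_h^2$. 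An alternative that stays closer to the paper's toolkit is to split $u=w_1+w_2$ via \eqref{eqn:Sol-expr-u}, where $w_1$ is the frozen-coefficient inhomogeneous solution---whose full error, and hence $(I-P_h)w_1$ by the triangle inequality with ${\rm I}_2$, is already covered by the time-independent estimate behind \eqref{eqn:err-space-rhs}---while the perturbation term $w_2(t_0)=\int_0^{t_0}E(t_0-s;t_0)(A(t_0)-A(s))u(s)\,\d s$ does satisfy a pointwise bound $\|A(t_0)w_2(t_0)\|_{L^2(\Omega)}\le c\int_0^{t_0}\|u(s)\|_{H^2(\Omega)}\,\d s\le c\|f\|_{L^\infty(0,t_0;L^2(\Omega))}$, so the plain $O(h^2)$ projection estimate applies to it. Your route buys self-containedness (only smoothing plus interpolation); the alternative avoids fractional-space bookkeeping. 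The only points you should verify explicitly in a full write-up are the norm equivalence $\|A(t_0)^{(2-\varepsilon)/2}v\|_{L^2(\Omega)}\simeq\|v\|_{\dot H^{2-\varepsilon}(\Omega)}$ with constants uniform in $t_0$ and $\varepsilon$ (which follows by interpolation between $D(A(t_0)^{1/2})=H_0^1(\Omega)$ and $D(A(t_0))=\dot H^2(\Omega)$, using \eqref{Cond-1}--\eqref{Cond-2} and convexity of $\Omega$) and the interpolated approximation property of $P_h$; both are routine.
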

\begin{proof}
The proof is similar to Theorem \ref{thm:err-space1}, in view of \eqref{eqn:err-space-rhs}, and the following estimates:
\begin{align*}
  \|{\rm I}_3(t_0)\|_{L^2(\Omega)} & \leq ch^2 \|f\|_{L^\infty(0,t_0;L^2(\Omega))},\\
  \|{\rm I}_4(t_0)\|_{L^2(\Omega)}&\leq ch^2\|f\|_{L^\infty(0,t_0;L^2(\Omega))} + c\int_0^{t_0} \|e_h(s)\|_{L^2(\Omega)}\d s,
\end{align*}
which follow similarly as Lemmas \ref{lem:I3} and \ref{lem:I4}.
Actually, the first follows from \eqref{eqn:est-I3last} and Theorem \ref{thm:reg-space1} by
\begin{align*}
  \|{\rm I}_3(t_0)\|_{L^2(\Omega)}
 \le ch^{2} \int_0^{t_0} \|u(s)\|_{H^{2}(\Omega)} \d s \le ch^{2} \|f\|_{L^\infty(0,t_0;L^2(\Omega))}.
\end{align*}
Similarly, the second follows from the expressions of ${\rm I}_{4,1}'$ and ${\rm I}_{4,2}''$ in
Lemma \ref{lem:I4}, and Theorem \ref{thm:reg-space1}.
\end{proof}

\begin{remark}
We have only discussed discretization by piecewise linear finite elements. It is of much interest to
extend the analysis to high-order finite elements. This seems missing even for the case of a
time-independent diffusion coefficient when problem data are nonsmooth, partly due to the
limited smoothing property of the solution operators \cite{JinLazarovZhou:2018review}.
\end{remark}

\section{Time discretization}\label{sec:fully}
Now we study the time discretization of problem \eqref{eqn:pde}.
We divide the time interval $[0,T]$ into a uniform grid, with $ t_n=n\tau$, $n=0,\ldots,N$, and $\tau=T/N$ being
the time step size. Then we approximate the Riemann-Liouville fractional derivative
\begin{equation*}
^R\Dal \varphi(t)=\frac{1}{\Gamma(1-\alpha)}\frac{\d}{\d t}\int_0^t(t-s)^{-\alpha}\varphi(s)\d s
\end{equation*}
by the backward Euler (BE) convolution quadrature (with $\varphi^j=\varphi(t_j)$) \cite{Lubich:1986,JinLazarovZhou:SISC2016}:
\begin{equation*}
  ^R\Dal \varphi(t_n) \approx \tau^{-\alpha} \sum_{j=0}^nb_j\varphi^{n-j}:=\bar\partial_\tau^\alpha \varphi^n,\quad\mbox{ with } \sum_{j=0}^\infty b_j\xi^j = (1-\xi)^\alpha.
\end{equation*}

The fully discrete scheme for problem \eqref{eqn:pde} reads: find $u_h^n\in S_h$ such that
\begin{align}\label{eqn:fully}
\bDal (u_h^n-u_h^0)+A_h(t_n)u_h^n=P_hf(t_n),\quad n=1,2,\ldots,N,
\end{align}
with the initial condition $u_h^0=P_hu_0\in S_h$. Similar to the semidiscrete case,
for a given $m\in\mathbb{N}$ with $1\le m\le N$, we rewrite \eqref{eqn:fully} as
\begin{align}\label{eqn:CQ2}
\bDal (u_h^n-u_h^0)+A_h(t_m)u_h^n=P_hf( t_n) +(A_h(t_m)-A_h(t_n))u_h^n .
\end{align}
By means of discrete Laplace transform, the fully discrete solution $ u_h^m\in S_h$ is given by
\begin{align}\label{eqn:Sol-expr-uhtau}
  u_h^m = F_{\tau,m}^mu_h^0 + \tau \sum_{k=1}^m E_{\tau,m}^{m-k}[P_hf(t_k)+ (A_h(t_m)-A_h(t_k))u_h^k],
\end{align}
where the fully discrete operators $F_{\tau,m}^n$ and $E_{\tau,m}^n$ are respectively defined by (with $\delta_\tau(\xi)=(1-\xi)/\tau$)
\begin{align}
F_{\tau,m}^n &= \frac{1}{2\pi\mathrm{i}}\int_{\Gamma_{\theta,\delta}^\tau } e^{zn\tau} \delta_\tau(e^{-z\tau})^{\alpha-1}({ \delta_\tau(e^{-z\tau})^\alpha}+A_h(t_m))^{-1}\,\d z ,\\
E_{\tau,m}^n &= \frac{1}{2\pi\mathrm{i}}\int_{\Gamma_{\theta,\delta}^\tau } e^{zn\tau} ({ \delta_\tau(e^{-z\tau})^\alpha}+A_h(t_m))^{-1}\,\d z ,\label{op:disc}
\end{align}
with the contour
$\Gamma_{\theta,\delta}^\tau :=\{ z\in \Gamma_{\theta,\delta}:|\Im(z)|\le {\pi}/{\tau} \}$
(oriented with an increasing imaginary part).

The next lemma gives elementary properties of the kernel $\delta_\tau(e^{-z\tau})$. 
\begin{lemma}\label{lem:delta}
For any $\theta\in (\pi/2,\pi)$, there exists $\theta' \in (\pi/2,\pi)$  and
positive constants $c,c_1,c_2$ $($independent of $\tau$$)$ such that for all $z\in \Gamma_{\theta,\delta}^\tau$
\begin{equation*}
  \begin{aligned}
& c_1|z|\leq
|\delta_\tau(e^{-z\tau})|\leq c_2|z|,
&&\delta_\tau(e^{-z\tau})\in \Sigma_{\theta'}, \\
& |\delta_\tau(e^{-z\tau})-z|\le c\tau |z|^{2},
&& |\delta_\tau(e^{-z\tau})^\alpha-z^\alpha|\leq c\tau |z|^{1+\alpha}.
\end{aligned}
\end{equation*}
\end{lemma}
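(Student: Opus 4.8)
The plan is to reduce all four estimates to elementary properties of the scalar function $g(w):=\frac{1-e^{-w}}{w}$, exploiting the fact that on the \emph{truncated} contour $\Gamma_{\theta,\delta}^\tau$ the rescaled variable $w:=z\tau$ stays in a fixed compact region, uniformly in $\tau$. Indeed, on the rays $z=\rho e^{\pm\mathrm i\theta}$ the truncation $|\Im(z)|\le\pi/\tau$ forces $\rho\sin\theta\le\pi/\tau$, hence $|w|=\rho\tau\le\pi/\sin\theta$; on the circular arc $|z|=\delta$ one has $|w|=\delta\tau\le\delta T$. Thus for all $z\in\Gamma_{\theta,\delta}^\tau$ the point $w=z\tau$ lies in the compact set
\[
K:=\{\,w\in\mathbb{C}:\ |\arg w|\le\theta,\ |w|\le M,\ |\Im(w)|\le\pi\,\},\qquad M:=\max(\delta T,\pi/\sin\theta),
\]
which is independent of $\tau$. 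This single observation is what makes every constant $\tau$-uniform, and the restriction $|\Im(w)|\le\pi$ (coming from the truncation) is exactly what keeps $w$ within one period of the exponential, away from the other zeros $2\pi\mathrm i k$ of $1-e^{-w}$.

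Writing $\delta_\tau(e^{-z\tau})=z\,g(z\tau)$, I would first record that $g$ extends analytically across $0$ with $g(0)=1$ and that its numerator $1-e^{-w}$ vanishes in the strip $|\Im(w)|\le\pi$ only at $w=0$; hence $g$ is continuous and nonvanishing on the compact set $K$. Consequently $|g|$ attains a positive minimum $c_1$ and finite maximum $c_2$ on $K$, which gives $c_1|z|\le|\delta_\tau(e^{-z\tau})|\le c_2|z|$, the first estimate. For the third estimate I use that $h(w):=1-e^{-w}-w$ has a double zero at $0$, so $|h(w)|\le c|w|^2$ on $K$; then $\delta_\tau(e^{-z\tau})-z=h(z\tau)/\tau$ together with $|w|=\tau|z|$ yields $|\delta_\tau(e^{-z\tau})-z|\le c|w|^2/\tau=c\tau|z|^2$.

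The sector property is the main obstacle, since near the corners of $\Gamma_{\theta,\delta}^\tau$ the argument of $z$ is already as large as $\theta$, so an additive bound $\arg\delta_\tau=\arg z+\arg g$ is useless. Instead I argue directly that $\delta_\tau(e^{-z\tau})=(1-e^{-w})/\tau$ avoids the nonpositive real axis. With $w=u+\mathrm iv$ one has $\Re(1-e^{-w})=1-e^{-u}\cos v$ and $\Im(1-e^{-w})=e^{-u}\sin v$; a point of $(-\infty,0]$ forces $\Im=0$, i.e.\ $v\in\{0,\pm\pi\}$ in the strip, and then $\Re\ge0$ because $w\in\Sigma_\theta$ excludes $v=0,u<0$ while $v=\pm\pi$ gives $\Re=1+e^{-u}>0$; the only remaining value $w=0$ is handled by $1-e^{-w}=w+O(w^2)$, whose argument tends to $\arg w\in[-\theta,\theta]$. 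Since $1-e^{-w}$ is continuous on the compact $K$ and avoids $(-\infty,0)$, its argument is bounded by some $\theta'\in(\pi/2,\pi)$ (one may take $\theta'\ge\theta$), giving $\delta_\tau(e^{-z\tau})\in\Sigma_{\theta'}$.

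Finally, for the fourth estimate I exploit the multiplicative structure once more. The same sign analysis shows $g$ maps $K$ into a compact subset of the slit plane $\mathbb{C}\setminus(-\infty,0]$, on which $\zeta\mapsto\zeta^\alpha$ (principal branch) is Lipschitz and the segment from $g(w)$ to $1$ stays in the slit plane; hence $|g(w)^\alpha-1|\le c|g(w)-1|$. Since $|g(w)-1|=|h(w)|/|w|\le c|w|=c\tau|z|$, and (by branch consistency, a routine check because all arguments lie in $(-\pi,\pi)$) $\delta_\tau(e^{-z\tau})^\alpha=z^\alpha g(z\tau)^\alpha$, I conclude
\[
|\delta_\tau(e^{-z\tau})^\alpha-z^\alpha|=|z|^\alpha\,|g(z\tau)^\alpha-1|\le c\,|z|^\alpha\,\tau|z|=c\tau|z|^{1+\alpha}.
\]
The only genuinely delicate point throughout is the uniform sector bound $\theta'<\pi$; everything else follows from compactness of $K$ together with the Taylor expansions of $1-e^{-w}$ at the origin.
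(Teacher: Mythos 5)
The paper itself gives no proof of this lemma (it is quoted as a standard fact from the convolution quadrature literature), so your argument has to stand on its own. Your overall strategy is the right one --- rescale to $w=z\tau$, confine $w$ to a fixed compact set, and reduce all four bounds to elementary properties of $g(w)=(1-e^{-w})/w$ --- and your treatment of the two-sided bound, the sector property, and the estimate $|\delta_\tau(e^{-z\tau})-z|\le c\tau|z|^2$ is essentially sound. However, two steps have genuine gaps.

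The first gap is uniformity in $\delta$. On the circular arc you bound $|w|=\delta\tau\le \delta T$, so your compact set $K$, and with it every constant --- in particular $c_1=\min_K|g|$, which degenerates as the radius of $K$ grows, since $|g(w)|\sim 1/|w|$ for $w$ real and large --- depends on $\delta$. But the paper invokes this lemma in Lemmas \ref{lem:est-Etau} and \ref{lem:smoothing2} with $\delta=(t_m-t_k+\tau)^{-1}$ and $\delta=(t_n+\tau)^{-1}$, i.e.\ with $\delta$ as large as $1/\tau$; then $\delta T=T/\tau\to\infty$ and your constants blow up exactly in the regime where the lemma is used. The fix is one line but must be stated: in all these applications $\delta\tau\le 1$, so $|w|\le 1$ on the arc and the radius of $K$ can be taken to be $\pi/\sin\theta$, independent of both $\tau$ and $\delta$. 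Some restriction of this kind is also genuinely necessary: for $z$ real positive on the arc with $\delta\tau\to\infty$ one has $|\delta_\tau(e^{-z\tau})|\approx 1/\tau\ll|z|$, so the lower bound $c_1|z|\le|\delta_\tau(e^{-z\tau})|$ cannot hold with constants uniform in $\delta$.

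The second gap is in the last estimate. Two assertions there are not consequences of anything you proved: (i) that $g$ maps $K$ into the slit plane --- your sign analysis concerned $1-e^{-w}$, and knowing $1-e^{-w}\notin(-\infty,0]$ together with $|\arg w|\le\theta$ does not rule out that the quotient $(1-e^{-w})/w$ is a negative real number; (ii) the branch identity $\delta_\tau(e^{-z\tau})^\alpha=z^\alpha g(z\tau)^\alpha$, which requires $\arg z+\arg g(z\tau)\in(-\pi,\pi]$ --- this does not follow from each argument separately lying in $(-\pi,\pi)$. Both claims are true, and both follow from a single computation you should add: from $g(w)=\int_0^1e^{-sw}\,\d s$ one gets $\Im g(w)=-\int_0^1e^{-s\Re w}\sin(s\Im w)\,\d s$, which is nonzero and of sign opposite to $\Im w$ whenever $0<|\Im w|\le\pi$. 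Hence $g(w)$ is real only for real $w\in K$, where $g(w)>0$, proving (i); and $\arg z$ and $\arg g(z\tau)$ never have the same strict sign, so $|\arg z+\arg g(z\tau)|\le\max(|\arg z|,|\arg g(z\tau)|)<\pi$, proving (ii). With this supplement (plus the remark that the slit plane is star-shaped about $1$, so the segments $[1,g(w)]$, $w\in K$, sweep out a compact subset of it on which $\zeta\mapsto\zeta^\alpha$ has bounded derivative), your Lipschitz argument for $|g(w)^\alpha-1|\le c|g(w)-1|$ is complete.
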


By the solution representations \eqref{eqn:Sol-expr-uh}
and \eqref{eqn:Sol-expr-uhtau}, the temporal error $e_h^m = u_h^m- u_h(t_m)$ satisfies
\begin{align}
  e_h^m &= (F_h(t_m;t_m)P_hu_0-F_{\tau,m}^m u_h^ 0 \big)
    + \big(\tau \sum_{k=1}^m E_{\tau,m}^{m-k}P_hf(t_k) - \sum_{k=1}^m\int_{t_{k-1}}^{t_k}E_h(t_m-s)P_hf(s)\d s\big)\nonumber\\
    &\quad + \big(\tau\sum_{k=1}^m E_{\tau,m}^{m-k}(A_h(t_m)-A_h(t_k))u_h^k - \sum_{k=1}^m\int_{t_{k-1}}^{t_k}E_h(t_m-s)(A_h(t_m)-A_h(s))u_h(s)ds\big)\nonumber\\
     & =\sum_{i=1}^3 {\rm I}_i^m.\label{eqn:fully-split}
\end{align}
For the first two terms, there hold \cite[Theorem 3.5]{JinLazarovZhou:SISC2016}
\begin{align*}
 \|{\rm I}_1^ m\|_{L^2(\Omega)} & \leq c\tau t_m^{-(1-\alpha\beta/2) } \| u_0 \|_{\dot H^\beta(\Omega)} ,\quad \beta\in [0,2],\\
 \|{\rm I}_2^m\|_{L^2(\Omega)} &\leq c\tau t_m^{-(1-\alpha)}\| f(0) \|_{L^2(\Omega)} +c \tau \int_0^{t_m} (t_m-s)^{\alpha-1} \| f'(s)  \|_{L^2(\Omega)} \,\d s.
\end{align*}

To estimate ${\rm I}_3^m$, we need two preliminary bounds on the operator $E_{\tau,m}^n$.
\begin{lemma}\label{lem:est-Etau}
For the operator $E_{\tau,m}^{m-k}$ defined in \eqref{op:disc}, there holds for any $\beta\in[0,1]$
\begin{equation*}
 \Big \| [\tau A_h^\beta(t_m) E_{\tau,m}^{m-k} -  \int_{t_{k-1}}^{t_k}A_h^\beta(t_m)E_h(t_m-s;t_m) \,\d s]\Big\|\leq c \tau^2(t_m-t_k+\tau)^{-(2-(1-\beta)\alpha)}.
\end{equation*}
\end{lemma}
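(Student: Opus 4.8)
The plan is to write both operators as Bromwich-type contour integrals and to compare them termwise after reducing the continuous integral to the same contour. First I would swap the order of integration in the continuous term: since $E_h(t;t_m)=\frac{1}{2\pi\mathrm{i}}\int_{\Gamma_{\theta,\delta}}e^{zt}(z^\alpha+A_h(t_m))^{-1}\,\d z$ and with $n=m-k$ the argument $t_m-s$ ranges over $[n\tau,(n+1)\tau]$ as $s\in[t_{k-1},t_k]$, integrating $e^{z(t_m-s)}$ in $s$ produces the scalar factor $e^{zn\tau}\frac{e^{z\tau}-1}{z}$, so that
\[
\int_{t_{k-1}}^{t_k}A_h^\beta(t_m)E_h(t_m-s;t_m)\,\d s
=\frac{1}{2\pi\mathrm{i}}\int_{\Gamma_{\theta,\delta}}e^{zn\tau}\,\frac{e^{z\tau}-1}{z}\,A_h^\beta(t_m)(z^\alpha+A_h(t_m))^{-1}\,\d z .
\]
The discrete term $\tau A_h^\beta(t_m)E_{\tau,m}^{n}$ is already a contour integral over $\Gamma_{\theta,\delta}^\tau$, with the scalar factor $\tau$ and with the symbol $\zeta:=\delta_\tau(e^{-z\tau})$ in place of $z$. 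I would then split the difference into the \emph{tail} $\Gamma_{\theta,\delta}\setminus\Gamma_{\theta,\delta}^\tau$ of the continuous integral, plus the difference of the two integrands over the common contour $\Gamma_{\theta,\delta}^\tau$.

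On $\Gamma_{\theta,\delta}^\tau$ I would decompose the integrand difference (all operators evaluated at $t_m$) as $\tau A_h^\beta[(\zeta^\alpha+A_h)^{-1}-(z^\alpha+A_h)^{-1}]+(\tau-\frac{e^{z\tau}-1}{z})A_h^\beta(z^\alpha+A_h)^{-1}$. The resolvent identity gives $(\zeta^\alpha+A_h)^{-1}-(z^\alpha+A_h)^{-1}=(z^\alpha-\zeta^\alpha)(\zeta^\alpha+A_h)^{-1}(z^\alpha+A_h)^{-1}$, while Lemma \ref{lem:delta} supplies $|\zeta^\alpha-z^\alpha|\le c\tau|z|^{1+\alpha}$ and, via the Taylor expansion of $(e^w-1)/w$ together with the boundedness of $|z\tau|$ on $\Gamma_{\theta,\delta}^\tau$, $|\tau-\frac{e^{z\tau}-1}{z}|\le c\tau^2|z|$. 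Since $A_h(t_m)$ is self-adjoint and positive definite and both $z^\alpha\in\Sigma_{\alpha\theta}$ and $\zeta^\alpha\in\Sigma_{\alpha\theta'}$ lie in a sector of half-angle $<\pi$, spectral calculus yields $\|A_h^\beta(t_m)(z^\alpha+A_h(t_m))^{-1}\|\le c|z|^{-(1-\beta)\alpha}$ for $\beta\in[0,1]$ (and the same with $\zeta$, using $|\zeta|\ge c_1|z|$). Combining these with $\|(z^\alpha+A_h)^{-1}\|\le c|z|^{-\alpha}$ (from \eqref{eqn:resol} applied with $z^\alpha$, valid verbatim for $A_h(t_m)$) shows that \emph{both} pieces have integrand bounded by $c\tau^2 e^{\Re(z)n\tau}|z|^{1-(1-\beta)\alpha}$.

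It then remains to integrate this bound over the contour. Choosing $\delta=(t_m-t_k+\tau)^{-1}$ is admissible: $t_m-t_k+\tau\ge\tau$ keeps $\delta$ below the truncation height $\pi/(\tau\sin\theta)$, and the integrand is analytic (and, on the discrete side, $2\pi\mathrm{i}/\tau$-periodic) across the relevant region, so neither operator changes value. I would split into the two rays $z=\rho e^{\pm\mathrm{i}\theta}$ and the arc $|z|=\delta$. On the arc, $e^{\Re(z)n\tau}\le e$ and the length is $O(\delta)$, giving $c\,\delta^{\,2-(1-\beta)\alpha}$; on the rays $\Re(z)=\rho\cos\theta<0$, and one distinguishes $n\ge1$ (where $e^{\rho\cos\theta\,n\tau}$ produces a Gamma-function factor $(n\tau)^{-(2-(1-\beta)\alpha)}$ and $n\tau\sim t_m-t_k+\tau$) from $n=0$ (where the ray is truncated at $\rho\sim1/\tau$ and direct integration gives $\tau^{-(2-(1-\beta)\alpha)}$). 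This yields exactly $c\tau^2(t_m-t_k+\tau)^{-(2-(1-\beta)\alpha)}$; the tail is treated the same way, now using the exponential decay of $e^{z\tau}$ itself on the rays with $\Re(z)<0$.

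The step I expect to be the main obstacle is the endpoint $\beta=1$ at the diagonal term $k=m$ (i.e. $n=0$), where $\|A_h(t_m)E_h(t;t_m)\|\sim t^{-1}$ is not integrable near $t=0$: the integral $\int_{t_{m-1}}^{t_m}A_h(t_m)E_h(t_m-s;t_m)\,\d s$ and the Fubini step above are then only conditionally convergent, and the naive tail estimate $\int^\infty\rho^{-1}\,\d\rho$ diverges. I would circumvent this either by using the antiderivative identity $A_h(t_m)E_h(\cdot;t_m)=-\partial_t F_h(\cdot;t_m)$ together with its discrete counterpart for $E_{\tau,m}^{n}$, so that the singular term is replaced by bounded increments of $F_h$ and $F_{\tau,m}^{n}$; or, more cleanly, by proving the estimate only at the endpoints $\beta=0$ and $\beta=1$ and interpolating. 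Writing the quantity as $A_h^\beta(t_m)S$ with $S=\tau E_{\tau,m}^{n}-\int_{t_{k-1}}^{t_k}E_h(t_m-s;t_m)\,\d s$, the moment inequality for the positive self-adjoint operator $A_h(t_m)$ gives $\|A_h^\beta(t_m)S\|\le\|S\|^{1-\beta}\|A_h(t_m)S\|^{\beta}$, and since the target exponent $2-(1-\beta)\alpha$ is affine in $\beta$, this interpolates the endpoint bounds $c\tau^2(t_m-t_k+\tau)^{-(2-\alpha)}$ and $c\tau^2(t_m-t_k+\tau)^{-2}$ into the stated estimate.
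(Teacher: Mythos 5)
Your proposal is correct and follows essentially the same route as the paper's proof: reduce the continuous term to the truncated contour $\Gamma_{\theta,\delta}^\tau$ via the Fubini step, bound the integrand difference using Lemma \ref{lem:delta} and the resolvent estimate with the choice $\delta=(t_m-t_k+\tau)^{-1}$, estimate the tail $\Gamma_{\theta,\delta}\setminus\Gamma_{\theta,\delta}^\tau$ separately, and obtain general $\beta$ by interpolating the endpoint cases (the paper proves $\beta=0$ explicitly, declares $\beta=1$ ``analogous,'' and interpolates). Your extra care at the endpoint $\beta=1$, $k=m$ --- where the naive tail bound indeed diverges and one needs either the antiderivative identity $A_hE_h(\cdot;t_m)=-\partial_t F_h(\cdot;t_m)$ combined with $\|\tau A_h(t_m)E^0_{\tau,m}\|\le c$ from Lemma \ref{lem:smoothing2}, or the moment-inequality interpolation $\|A_h^\beta S\|\le\|S\|^{1-\beta}\|A_hS\|^{\beta}$ --- is a legitimate refinement of a point the paper passes over.
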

\begin{proof}
First we consider the case $\beta=0$. By the definition of the operator $E_h(t;t_m)$, we have
\begin{align*}
 \int_{t_{k-1}}^{t_{k}} E_h(t_m-s;t_m) \,\d s
 &=  \frac{1}{2\pi {\rm i}}\int_{\Gamma_{\theta,\delta}}  (z^\alpha+A_h(t_m) )^{-1}\int_{t_{k-1}}^{t_{k}} e^{z(t_m-s)} \,\d s
\, \d z \\
 & = \frac{1}{2\pi {\rm i}}\int_{\Gamma_{\theta,\delta}}  e^{z(t_m-t_{k })} z^{-1}(e^{z\tau}-1)
 (z^\alpha+A_h(t_m) )^{-1}\, \d z.
\end{align*}
This and the defining relation \eqref{op:disc} yield
\begin{align*}
  & \tau E_{\tau,m}^{m-k}-\int_{t_{k-1}}^{t_{k}} E_h(t_m-s;t_m) \,\d s\\
  = & \frac{1}{2\pi {\rm i}}\int_{\Gamma_{\theta,\delta}^\tau } e^{z(t_m-t_k)}\left[ \tau({ \delta_\tau(e^{-z\tau})^\alpha}+A_h(t_m))^{-1}-z^{-1}(e^{z\tau}-1)
 (z^\alpha+A_h(t_m) )^{-1}\right]\,\d z\\
  & - \frac{1}{2\pi {\rm i}}\int_{\Gamma_{\theta,\delta}\setminus\Gamma_{\theta,\delta}^\tau}  e^{z(t_m-t_{k })} z^{-1}(e^{z\tau}-1)
 (z^\alpha+A_h(t_m) )^{-1}\, \d z:={\rm I} + {\rm II}.
\end{align*}
For $k<m$, let $\delta=(t_m-t_k+\tau)^{-1}$ and $z=s\cos\varphi+{\rm i}s\sin\varphi$.
By Lemma \ref{lem:delta} and \eqref{eqn:resol}, we obtain
\begin{align*}
\big\|  \tau({ \delta_\tau(e^{-z\tau})^\alpha}+A_h(t_m))^{-1}-z^{-1}(e^{z\tau}-1)
 (z^\alpha+A_h(t_m) )^{-1}  \big\| \le c \tau^2 |z|^{-\alpha+1},\quad \forall z \in \Gamma_{\theta,\delta}^\tau.
\end{align*}
Then the bound on the term ${\rm I}$ follows by
\begin{align*}
\| {\rm I} \| &  \le c \tau^2 \int_\delta^{\frac{\pi}{\tau\sin\theta}} e^{s(t_m-t_{k })\cos\theta} s^{-\alpha+1} \d s
+ c \tau^2\int_{-\theta}^\theta e^{\cos\varphi} \delta^{-\alpha+2}  \d \varphi   \le c\tau^2(t_m-t_k+\tau)^{\alpha-2} .
\end{align*}
Similarly, Taylor expansion of $e^{z\tau}$,
\eqref{eqn:resol} and Lemma \ref{lem:delta} bound the term ${\rm II}$ by
\begin{align*}
\| {\rm II} \|
& \le c\tau \int_{\Gamma_{\theta,\delta}\setminus\Gamma_{\theta,\delta}^\tau} |e^{z(t_m-t_{k })}||z|^{-\alpha}  |\d z|
\le c\tau \int_{\frac{\pi}{\tau\sin\theta}}^\infty  e^{s(t_m-t_{k })\cos\theta} s^{-\alpha}  \d s \\
&\le c  \tau^2 \int_{\frac{\pi}{\tau\sin\theta}}^\infty  e^{s(t_m-t_{k })\cos\theta} s^{1-\alpha} \d s
\le c \tau^2 (t_m-t_{k })^{-(2-\alpha)}.
\end{align*}
For $k=m$, there hold
\begin{align*}
\| {\rm I} \| &  \le c \tau^2 \int_\delta^{\frac{\pi}{\tau\sin\theta}} s^{-\alpha+1} \d s
+ c \tau^2\int_{-\theta}^\theta \delta^{-\alpha+2}  \d \varphi   \le c\tau^\alpha,\\
\| {\rm II} \|
& \le c \int_{\Gamma_{\theta,\delta}\setminus\Gamma_{\theta,\delta}^\tau} |z|^{-\alpha-1}  |\d z|
 \le c \int_{\frac{\pi}{\tau\sin\theta}}^\infty   s^{-\alpha-1}  \d s \le c  \tau^\alpha.
\end{align*}
The proof for the case $\beta=1$ is analogous, and the intermediate case $\beta\in(0,1)$ follows by interpolation.
\end{proof}

The next result gives the smoothing property of the operator $E_{\tau,m}^n$.
\begin{lemma}\label{lem:smoothing2}
For the operator $E_{\tau,m}^n$ defined in \eqref{op:disc}, there holds
\begin{equation*}
  \|A_h(t_m)E_{\tau,m}^n\|\leq c(t_n+\tau)^{-1},\quad n=0,1,\dots,N.
\end{equation*}
\end{lemma}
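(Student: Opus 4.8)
The plan is to represent $A_h(t_m)E_{\tau,m}^n$ as a contour integral, bound its integrand uniformly, and then reduce everything to estimating $\int_{\Gamma_{\theta,\delta}^\tau}|e^{zn\tau}|\,|\d z|$ for a well-chosen radius $\delta$. First I would bring $A_h(t_m)$ inside the integral \eqref{op:disc},
\begin{equation*}
A_h(t_m)E_{\tau,m}^n = \frac{1}{2\pi\mathrm{i}}\int_{\Gamma_{\theta,\delta}^\tau} e^{zn\tau}\,A_h(t_m)\bigl(\delta_\tau(e^{-z\tau})^\alpha+A_h(t_m)\bigr)^{-1}\,\d z,
\end{equation*}
and write $\zeta=\delta_\tau(e^{-z\tau})^\alpha$, so that $A_h(t_m)(\zeta+A_h(t_m))^{-1}=I-\zeta(\zeta+A_h(t_m))^{-1}$. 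The crucial input is the uniform bound $\|A_h(t_m)(\zeta+A_h(t_m))^{-1}\|\le c$. This follows because Lemma \ref{lem:delta} ensures $\delta_\tau(e^{-z\tau})\in\Sigma_{\theta'}$, hence $\zeta\in\Sigma_{\alpha\theta'}$ with $\alpha\theta'<\pi$, so the discrete analogue of the resolvent estimate \eqref{eqn:resol} yields $\|\zeta(\zeta+A_h(t_m))^{-1}\|\le c$ and thus the desired uniform bound.

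Second, I would exploit the freedom in choosing $\delta$ by setting $\delta=(t_n+\tau)^{-1}=((n+1)\tau)^{-1}$. Since $\delta\le\pi/(\tau\sin\theta)$, this is compatible with the truncation $|\Im(z)|\le\pi/\tau$, and as the integrand is analytic between the corresponding contours the deformation is justified. It then remains to estimate
\begin{equation*}
\|A_h(t_m)E_{\tau,m}^n\|\le c\int_{\Gamma_{\theta,\delta}^\tau}|e^{zn\tau}|\,|\d z|.
\end{equation*}
On the two rays $z=\rho e^{\pm\mathrm{i}\theta}$ with $\delta\le\rho\le \pi/(\tau\sin\theta)$ one has $|e^{zn\tau}|=e^{\rho n\tau\cos\theta}$ with $\cos\theta<0$, while on the arc $z=\delta e^{\mathrm{i}\varphi}$, $\varphi\in[-\theta,\theta]$, one has $|e^{zn\tau}|\le e^{\delta n\tau}\le e$ since $\delta n\tau=n/(n+1)<1$.

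Third, I would bound the two pieces. For $n\ge1$ the ray contribution is controlled by $2\int_\delta^\infty e^{-\rho n\tau|\cos\theta|}\,\d\rho\le c(n\tau)^{-1}\le c(t_n+\tau)^{-1}$, and the arc contribution by $c\delta=c(t_n+\tau)^{-1}$. For $n=0$, where $t_n+\tau=\tau$ and $\delta=\tau^{-1}$, the exponential factor is trivial, so I would instead bound the ray integral by the contour length $\le c/\tau$ and the arc integral by $c\delta=c/\tau$, again giving $c(t_n+\tau)^{-1}$. Combining these yields the assertion.

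The only genuinely delicate step is verifying the uniform resolvent bound on $A_h(t_m)(\zeta+A_h(t_m))^{-1}$, which hinges on Lemma \ref{lem:delta} keeping $\zeta$ inside a fixed subsector of $\Sigma_\pi$ uniformly in $z\in\Gamma_{\theta,\delta}^\tau$ and $\tau$; once that is secured, the remainder is a routine contour computation, with the $n=0$ case dispatched by the crude length bound rather than by exponential decay.
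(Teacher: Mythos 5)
Your proposal is correct and follows essentially the same route as the paper: fix $\delta=(t_n+\tau)^{-1}$, bound $\|A_h(t_m)(\delta_\tau(e^{-z\tau})^\alpha+A_h(t_m))^{-1}\|\le c$ uniformly via Lemma \ref{lem:delta} and the resolvent estimate \eqref{eqn:resol}, and then estimate the contour integral of $|e^{zt_n}|$ over the rays and the arc. The only difference is presentational: you treat $n=0$ separately via the contour-length bound, which the paper handles implicitly through the truncation of the rays at $|\Im z|\le\pi/\tau$.
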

\begin{proof}
Upon letting $\delta=(t_n+\tau)^{-1}$ in $\Gamma_{\theta,\delta}^\tau$ and
$z=s\cos\varphi+{\rm i}s\sin\varphi$, by
\eqref{eqn:resol} and Lemma \ref{lem:delta},
 we have
\begin{align*}
\| A_h(t_m)E_{\tau,m}^n \| &= \bigg\|\frac{1}{2\pi\mathrm{i}}\int_{\Gamma_{\theta,\delta}^\tau } e^{zt_n} A_h(t_m)({ \delta_\tau(e^{-z\tau})^\alpha}+A_h(t_m))^{-1}\,\d z \bigg\|\\
&\le c\int_{(t_n+\tau)^{-1}}^{\frac{\pi}{\tau\sin\theta}} e^{st_n\cos\theta} \d s
+ c\int_{-\theta}^\theta e^{\cos\varphi} (t_n+\tau)^{-1}  \d \varphi
\le c(t_n+\tau)^{-1} .
\end{align*}
This completes the proof of the lemma.
\end{proof}

Below we analyze the scheme \eqref{eqn:fully} for the homogeneous and inhomogeneous problems separately.

\subsection{Error estimate for the homogeneous problem}
First we analyze the homogeneous problem. It suffices to bound the term ${\rm I}_3^m$ in
the splitting \eqref{eqn:fully-split}.
\begin{lemma}\label{lem:I3m}
Under conditions \eqref{Cond-1}-\eqref{Cond-2}, for $u_0\in L^2(\Omega)$ and $f= 0$, there holds
\begin{equation*}
  \|{\rm I}_3^m\|_{L^2(\Omega)}\leq c \tau \log(1+t_m/\tau) t_m^{-1}\|u_0\|_{L^2(\Omega)} + c\tau \sum_{k=1}^m\|e_h^k\|_{L^2(\Omega)}.
\end{equation*}
\end{lemma}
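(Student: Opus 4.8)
The plan is to isolate the genuinely discrete error (the part carrying $e_h^k$) from a consistency error that involves only the semidiscrete solution $u_h$. Writing $u_h^k = u_h(t_k)+e_h^k$ in the discrete sum of ${\rm I}_3^m$ from \eqref{eqn:fully-split}, I split ${\rm I}_3^m = J_1 + J_2$, where
\begin{equation*}
J_1 = \tau\sum_{k=1}^m E_{\tau,m}^{m-k}(A_h(t_m)-A_h(t_k))e_h^k
\end{equation*}
and $J_2$ is the remaining term in which every factor is a known quantity. For $J_1$ I factor $(A_h(t_m)-A_h(t_k)) = A_h(t_m)(I-A_h(t_m)^{-1}A_h(t_k))$ and combine the smoothing bound $\|A_h(t_m)E_{\tau,m}^{m-k}\|\le c(t_{m-k}+\tau)^{-1}$ from Lemma \ref{lem:smoothing2} with $\|(I-A_h(t_m)^{-1}A_h(t_k))e_h^k\|\le c\,t_{m-k}\|e_h^k\|$ from Lemma \ref{lemma-fem-1}. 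Since $t_{m-k}(t_{m-k}+\tau)^{-1}\le 1$, this gives $\|J_1\|\le c\tau\sum_{k=1}^m\|e_h^k\|$, the second term of the claim.

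The crucial device for $J_2$ is to move the operator $A_h(t_m)$ off the solution and onto the smoothing kernel. Setting $w(s)=(I-A_h(t_m)^{-1}A_h(s))u_h(s)$, we have $(A_h(t_m)-A_h(s))u_h(s)=A_h(t_m)w(s)$, with $w(t_m)=0$ and, by Lemma \ref{lemma-fem-1} together with $L^2$-stability $\|u_h(s)\|\le c\|u_0\|$, the bound $\|w(s)\|\le c(t_m-s)\|u_0\|$. This substitution is essential, since a direct treatment would require $\|A_h(t_m)u_h'(r)\|$, which is \emph{not} controlled uniformly in $h$. I then split $J_2=\tilde J_{2,1}+\tilde J_{2,2}$ with
\begin{equation*}
\tilde J_{2,1}=\sum_{k=1}^m\Big(\tau A_h(t_m)E_{\tau,m}^{m-k}-\int_{t_{k-1}}^{t_k}A_h(t_m)E_h(t_m-s;t_m)\,\d s\Big)w(t_k),\qquad \tilde J_{2,2}=\sum_{k=1}^m\int_{t_{k-1}}^{t_k}A_h(t_m)E_h(t_m-s;t_m)(w(t_k)-w(s))\,\d s.
\end{equation*}

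For $\tilde J_{2,1}$ I invoke Lemma \ref{lem:est-Etau} with $\beta=1$, which bounds the operator factor by $c\tau^2(t_m-t_k+\tau)^{-2}$; because $w(t_m)=0$ the $k=m$ term drops out, so using $\|w(t_k)\|\le c\,t_{m-k}\|u_0\|$ and $t_{m-k}(t_{m-k}+\tau)^{-2}\le(t_{m-k}+\tau)^{-1}$ the sum reduces to $c\tau^2\sum_{j=1}^{m-1}(t_j+\tau)^{-1}$, whence $\|\tilde J_{2,1}\|\le c\tau\log(1+t_m/\tau)\|u_0\|$. For $\tilde J_{2,2}$ I use $\|A_h(t_m)E_h(t_m-s;t_m)\|\le c(t_m-s)^{-1}$ (the semidiscrete form of Lemma \ref{lem:smoothing}(ii)) together with $w(t_k)-w(s)=\int_s^{t_k}w'(r)\,\d r$ and the bound
\begin{equation*}
\|w'(r)\|\le c\big(r^{-\alpha}+(t_m-r)r^{-1}\big)\|u_0\|,
\end{equation*}
which follows by differentiating $w$ and estimating the two resulting terms via $\|A_h(t_m)^{-1}\|\le c$, $\|A_h'(r)u_h(r)\|\le c\|A_h(r)u_h(r)\|\le cr^{-\alpha}\|u_0\|$ (Remark \ref{rem:lemma-fem-1} and Theorem \ref{thm:reg-semi}(i)), and $\|u_h'(r)\|\le cr^{-1}\|u_0\|$.

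The main obstacle is the final summation in $\tilde J_{2,2}$: the kernel $(t_m-s)^{-1}$ is non-integrable at $s=t_m$, but this is compensated by the increment bound $\|w(t_k)-w(s)\|\le c(t_m-s)(\cdots)$, which keeps the $k=m$ contribution finite, after which the remaining terms must be summed carefully to extract the factor $\log(1+t_m/\tau)\,t_m^{-1}$, using $t_m\le T$ to absorb lower powers of $t_m$. Collecting $\|\tilde J_{2,1}\|$ and $\|\tilde J_{2,2}\|$, each dominated by $c\tau\log(1+t_m/\tau)t_m^{-1}\|u_0\|$, together with the bound on $\|J_1\|$ yields the assertion.
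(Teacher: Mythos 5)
Your proposal is correct, and its skeleton coincides with the paper's: your $J_1$, $\tilde J_{2,1}$, $\tilde J_{2,2}$ are exactly the paper's terms ${\rm I}_k$, ${\rm II}_k$, ${\rm III}_k$ summed over $k$ (with $Q(s)=A_h(t_m)w(s)$), and the same ingredients --- Lemmas \ref{lemma-fem-1}, \ref{lem:smoothing2}, \ref{lem:est-Etau} and Theorem \ref{thm:reg-semi}(i) --- drive all three bounds; in particular your treatment of $J_1$ is identical to the paper's. The differences are in two sub-estimates. For the quadrature term, the paper applies Lemma \ref{lem:est-Etau} with $\beta=0$ and pairs it with the smoothing estimate $\|A_h(t_m)u_h(t_k)\|_{L^2(\Omega)}\le ct_k^{-\alpha}\|u_0\|_{L^2(\Omega)}$, so that $\tau\sum_k(t_m-t_k+\tau)^{\alpha-1}t_k^{-\alpha}\le c$ yields a log-free bound $c\tau\|u_0\|_{L^2(\Omega)}$; you take $\beta=1$ and pair it with the $L^2$-stability $\|u_h(t_k)\|_{L^2(\Omega)}\le c\|u_0\|_{L^2(\Omega)}$, which costs a harmless extra logarithm. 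Be aware that this stability bound is not explicitly recorded in Theorem \ref{thm:reg-semi}; it does follow from the representation \eqref{eqn:Sol-expr-uh} and Gronwall's inequality, or you could bypass it entirely by bounding $\|w(t_k)\|_{L^2(\Omega)}\le c(t_m-t_k)t_k^{-\alpha}\|u_0\|_{L^2(\Omega)}$ via Remark \ref{rem:lemma-fem-1} and Theorem \ref{thm:reg-semi}(i), but as written it is an ingredient used without proof. For the increment term, the paper splits $Q'(\xi)=(A_h(t_m)-A_h(\xi))u_h'(\xi)-A_h'(\xi)u_h(\xi)$ and keeps the \emph{integrable} kernel bound $\|E_h(t_m-s;t_m)\|\le c(t_m-s)^{\alpha-1}$ on the second piece (so that piece sums cleanly to $c\tau\|u_0\|_{L^2(\Omega)}$, with the $k=1$ interval handled by a separate direct bound), whereas you place $A_h(t_m)$ on the kernel uniformly, so both pieces face the non-integrable kernel $(t_m-s)^{-1}$; this still closes --- exchange the order of integration as in the paper's bound of ${\rm III}_{k,1}$, use $w(t_m)=0$ together with $\|w(s)\|_{L^2(\Omega)}\le c(t_m-s)\|u_0\|_{L^2(\Omega)}$ on the last interval, and absorb the resulting $t_m^{-\alpha}\log(1+t_m/\tau)$ factor using $t_m\le T$ --- but that final summation is precisely where your writeup says only "summed carefully," so it is the one step left genuinely to the reader rather than proved.
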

\begin{proof}
Let $e_h^k = u_h^k- u_h(t_k)$, and $Q(t)=(A_h(t_m)-A_h(t))u_h(t)$. Then we split the summand of ${\rm I}_3^m$ into
\begin{align*}
&\tau E_{\tau,m}^{m-k} (A_h(t_m)-A_h(t_k))u_h^k - \int_{t_{k-1}}^{t_k} E_h(t_m-s;t_m) Q(s)\,\d s\\
= &\big(\tau E_{\tau,m}^{m-k} (A_h(t_m)-A_h(t_k))e_h^k \big) + (\tau E_{\tau,m}^{m-k} -  \int_{t_{k-1}}^{t_k} E_h(t_m-s;t_m) \,\d s) Q(t_k) \\
& + \int_{t_{k-1}}^{t_k} E_h(t_m-s;t_m) (Q(t_k) - Q(s))\,\d s
=: {\rm I}_k + {\rm II}_{k} + {\rm III}_{k}.
\end{align*}
It remains to bound the terms ${\rm I}_k$, ${\rm II}_k$ and ${\rm III}_k$. First, Lemmas \ref{lem:smoothing2} and
\ref{lemma-fem-1} bound the term $\|{\rm I}_k\|_{L^2(\Omega)}$ by:
\begin{align*}
\|{\rm I}_k\|_{L^2(\Omega)}&= \tau \|  A_h(t_m)E_{\tau,m}^{m-k} (I-A_h^{-1}(t_m)A_h(t_k))e_h^k \|_{L^2(\Omega)}\\
&\le c\tau (t_m-t_k+\tau)^{-1} \|  (I-A_h^{-1}(t_m)A_h(t_k))e_h^k \|_{L^2(\Omega)} \le c\tau  \|  e_h^k  \|_{L^2(\Omega)}.
\end{align*}
Second, by Lemma \ref{lem:est-Etau} (with $\beta=0$) and Remark \ref{rem:lemma-fem-1}, we bound the term ${\rm II}_k$ by
\begin{align*}
 \|{\rm II}_k \|_{L^2(\Omega)} & \leq \|\tau E_{\tau,m}^{m-k} -  \int_{t_{k-1}}^{t_k} E_h(t_m-s;t_m) \,\d s\|\| Q(t_k)\|_{L^2(\Omega)}\\
  &\le c \tau^2 (t_m-t_{k}+\tau)^{\alpha-1}\| A_h(t_m)u_h( t_k) \|_{L^2(\Omega)},
\end{align*}
and consequently, by Theorem \ref{thm:reg-semi}(i), we deduce
\begin{align*}
 \sum_{k=1}^{m} \|{\rm II}_k  \|_{L^2(\Omega)}& \le c\tau^2 \sum_{k=1}^{m}  (t_m-t_{k}+\tau)^{\alpha-1} \|A_h(t_m)u_h( t_k) \|_{L^2(\Omega)}\\
 &\le  c\tau^2 \| u_0\|_{L^2(\Omega)} \sum_{k=1}^{m}   (t_m-t_{k}+\tau)^{\alpha-1} t_k ^{-\alpha} \le c \tau \| u_0 \|_{L^2(\Omega)},
\end{align*}
where the last line follows from the inequality
\begin{equation*}
  \tau\sum_{k=1}^{m}   (t_m-t_{k}+\tau)^{\alpha-1} t_k ^{-\alpha} \leq c.
\end{equation*}
Last, for the third term ${\rm III}_k$,
with $k=1$, by Lemma \ref{lemma-fem-1}, we have
\begin{align*}
 \|{\rm III}_1\|_{L^2(\Omega)}
 &\le  \int_{0}^{\tau}\| E_h(t_m-s;t_m) Q(\tau) \|_{L^2(\Omega)} \,\d s +  \int_{0}^{\tau}\| E_h(t_m-s;t_m) Q(s) \|_{L^2(\Omega)} \,\d s \\
& = \int_{0}^{\tau}\| A_h(t_m)E_h(t_m-s;t_m) A_h(t_m)^{-1}Q(\tau) \|_{L^2(\Omega)} \,\d s \\
&\quad +  \int_{0}^{\tau}\| A_h(t_m) E_h(t_m-s;t_m)A_h(t_m)^{-1} Q(s) \|_{L^2(\Omega)} \,\d s \\
& \le c\int_{0}^{\tau} (t_m-s)^{-1} ((t_m-\tau) + (t_m-s)) \,\d s \| u_0 \|_{L^2(\Omega)} \le c\tau \| u_0\|_{L^2(\Omega)}.
\end{align*}
Meanwhile, for $k>1$, we further split the term ${\rm III}_k$ into
\begin{align*}
{\rm III}_k =& \int_{t_{k-1}}^{t_k}   E_h(t_m-s;t_m) \int_{t_k}^s   Q'(\xi) \,\d \xi\,\d s\\
 =& \int_{t_{k-1}}^{t_k}   E_h(t_m-s;t_m) \int_{t_k}^s  (A_h(t_m)-A_h(\xi)) u_h'(\xi) \,\d \xi\,\d s\\
 &- \int_{t_{k-1}}^{t_k}   E_h(t_m-s;t_m) \int_{t_k}^s  A_h'(\xi) u_h(\xi) \,\d \xi\,\d s
 =: {\rm III}_{k,1}+{\rm III}_{k,2}.
\end{align*}
By Lemmas \ref{lemma-fem-1} and \ref{lem:smoothing}(ii), the term ${\rm III}_{k,1}$ for any $k> 1$ can be bounded by
\begin{align*}
\| {\rm III}_{k,1}\|_{L^2(\Omega)}  &\leq \int_{t_{k-1}}^{t_k} \| A_h(t_m) E_h(t_m-s;t_m)\| \int^{t_k}_s \|(I-A_h(t_m)^{-1}A_h(\xi)) u_h'(\xi)\|_{L^2(\Omega)} \,\d \xi\,\d s\\
&\le c\int_{t_{k-1}}^{t_k} (t_m-s)^{-1} \int^{t_k}_s (t_m-\xi) \|  u_h'(\xi)\|_{L^2(\Omega)} \,\d \xi\,\d s \\
&\leq c\int_{t_{k-1}}^{t_k} (t_m-s)^{-1} \int^{t_k}_s (t_m-\xi) \xi^{-1} \|  u_0\|_{L^2(\Omega)} \,\d \xi\,\d s,
\end{align*}
where the last step is due to Theorem \ref{thm:reg-semi}(i). Now we note the elementary inequality
\begin{align*}
  \int_{t_{k-1}}^{t_k} (t_m-s)^{-1} \int^{t_k}_s (t_m-\xi) &\xi^{-1} \d \xi\d s = \int_{t_{k-1}}^{t_k}(t_m-\xi)\xi^{-1}\int_{t_{k-1}}^\xi(t_m-s)^{-1}\d s \d \xi \\
   &\leq \int_{t_{k-1}}^{t_k} (t_m-\xi)(t_m-\xi)^{-1}\tau\xi^{-1}\d\xi = \tau \int_{t_{k-1}}^{t_k}\xi^{-1}\d\xi.
\end{align*}
Consequently,
\begin{equation*}
  \| {\rm III}_{k,1}\|_{L^2(\Omega)}  \le c\tau \int_{t_{k-1}}^{t_k}\xi^{-1}\,\d\xi\|u_0\|_{L^2(\Omega)},
\end{equation*}
and
\begin{equation*}
  \sum_{k=2}^m\| {\rm III}_{k,1}\|_{L^2(\Omega)} \leq c\tau \int_{\tau}^{t_m} \xi^{ -1}\,\d\xi\|u_0\|_{L^2(\Omega)} =c\tau \log(t_m/\tau).
\end{equation*}
Similarly, by Theorem \ref{thm:reg-semi}(i), the term ${\rm III}_{k,2}$ for any $k>1$ is bounded by
\begin{align*}
\| {\rm III}_{k,2} \|_{L^2(\Omega)}  &= \int_{t_{k-1}}^{t_k} \| E_h(t_m-s;t_m)\| \int_s^{t_k} \|  A_h'(\xi)u_h(\xi)\|_{L^2(\Omega)} \,\d\xi\,\d s\\
&\le c\int_{t_{k-1}}^{t_k} (t_m-s)^{\alpha-1} \int^{t_k}_s\xi^{- \alpha}  \,\d\xi\,\d s\|u_0\|_{L^2(\Omega)}\\
& \le c\tau \int_{t_{k-1}}^{t_k} (t_m-s)^{\alpha-1}s^{-\alpha }\,\d s\|u_0\|_{L^2(\Omega)},
\end{align*}
where the last line follows from the inequality
$\int_s^{t_k} \xi^{-\alpha}\d \xi\leq s^{-\alpha}\int_s^{t_k}\d \xi\leq s^{-\alpha}\tau.$
Thus,
\begin{equation*}
  \sum_{k=1}^m\| {\rm III}_{k,2} \|_{L^2(\Omega)}
 \le c\tau \int_{0}^{t_m} (t_m-s)^{\alpha-1}s^{-\alpha }\,\d s\|u_0\|_{L^2(\Omega)}\leq c\|u_0\|_{L^2(\Omega)}.
\end{equation*}
Hence, there holds
\begin{align*}
\sum_{k=1}^m \|{\rm III}_k\|_{L^2(\Omega)} \le  c \tau(1+\log(t_m/\tau))\|u_0\|_{L^2(\Omega)}.
\end{align*}
Combining the preceding estimates completes the proof of the lemma.
\end{proof}

Now we can state an error estimate for the homogeneous problem.
\begin{theorem}\label{thm:err-time1}
Under conditions \eqref{Cond-1}-\eqref{Cond-2}, $u_0\in L^2(\Omega)$ and $f=0$, there holds
\begin{equation*}
  \|u_h^m-u_h(t_m)\|_{L^2(\Omega)} \leq c\tau t_m^{-1}\log (1+t_m/\tau))\|u_0\|_{L^2(\Omega)}.
\end{equation*}
\end{theorem}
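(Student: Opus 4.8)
The plan is to work directly with the temporal error $e_h^m = u_h^m - u_h(t_m)$ through the decomposition \eqref{eqn:fully-split}, $e_h^m = \sum_{i=1}^3 {\rm I}_i^m$, reducing the theorem to a discrete Gronwall argument. First I would note that for $f=0$ the middle term vanishes, ${\rm I}_2^m=0$, so only ${\rm I}_1^m$ and ${\rm I}_3^m$ survive. The term ${\rm I}_1^m$ is the frozen-coefficient (time-independent) error, already controlled by the estimate displayed just after \eqref{eqn:fully-split}; taking $\beta=0$ gives $\|{\rm I}_1^m\|_{L^2(\Omega)}\le c\tau t_m^{-1}\|u_0\|_{L^2(\Omega)}$. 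The term ${\rm I}_3^m$ carries the feedback from the error itself and is bounded by Lemma \ref{lem:I3m}. Combining the two yields the key inequality
\begin{equation*}
  \|e_h^m\|_{L^2(\Omega)} \le c\tau t_m^{-1}\log(1+t_m/\tau)\|u_0\|_{L^2(\Omega)} + c\tau\sum_{k=1}^m\|e_h^k\|_{L^2(\Omega)},\quad 1\le m\le N.
\end{equation*}
Since $e_h^m$ is already the full temporal error, no spatial projection step is needed.

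Next I would close the estimate by a discrete Gronwall argument. The feedback kernel is the \emph{constant} $c$ (not singular in time), so for $\tau$ small enough the $k=m$ summand is absorbed into the left-hand side, leaving $\|e_h^m\|_{L^2(\Omega)}\le a_m + c\tau\sum_{k=1}^{m-1}\|e_h^k\|_{L^2(\Omega)}$ with $a_m$ the forcing term; the standard discrete Gronwall inequality then gives $\|e_h^m\|_{L^2(\Omega)}\le a_m + c\,e^{cT}\tau\sum_{k=1}^{m-1}a_k$. The crucial point is that one should resist collapsing the forcing into the single expression $\tau t_m^{-1}\log(1+t_m/\tau)$ for the Gronwall sum, and instead retain its honest two-part structure: the singular piece $c\tau t_k^{-1}$ from ${\rm I}_1^k$ and the piece $c\tau\log(1+t_k/\tau)$ coming from the ${\rm III}$-contributions in Lemma \ref{lem:I3m}. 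The two resulting sums are then estimated separately, $\tau\sum_{k<m}\tau t_k^{-1}\simeq\tau\log(t_m/\tau)$ and $\tau\sum_{k<m}\tau\log(1+t_k/\tau)\simeq\tau\, t_m\log(t_m/\tau)$, and each is bounded by $c\tau t_m^{-1}\log(1+t_m/\tau)\|u_0\|_{L^2(\Omega)}$ after invoking $t_m\le T$ (so that $1\le T t_m^{-1}$ and $t_m\le T^2 t_m^{-1}$). Assembling these bounds delivers the claimed estimate.

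The main obstacle is exactly this Gronwall bookkeeping: the feedback sum amplifies the $t_m^{-1}$ singularity of the frozen-coefficient error by precisely one logarithmic factor, which is the mechanism generating the $\log(1+t_m/\tau)$ absent from the time-independent theory. The pitfall to avoid is substituting the combined forcing $\tau t_m^{-1}\log(1+t_m/\tau)$ into the Gronwall sum, since $\tau\sum_{k<m}\frac{\log(1+k)}{k}\simeq \tfrac12\tau(\log(t_m/\tau))^2$ would spuriously introduce a \emph{second} logarithm; separating the two forcing components is what pins the estimate to a single $\log(1+t_m/\tau)$. The only routine point left to confirm is the smallness threshold $c\tau<1$ for the absorption step, which is harmless as all constants are independent of $\tau$ and $h$.
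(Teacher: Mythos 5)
Your proposal is correct and follows essentially the same route as the paper: for $f=0$ the splitting \eqref{eqn:fully-split} reduces to $e_h^m = {\rm I}_1^m + {\rm I}_3^m$, the first term is handled by the frozen-coefficient estimate with $\beta=0$ and the second by Lemma \ref{lem:I3m}, and the resulting inequality is closed by a discrete Gronwall argument. The paper compresses that last step into a citation of a Gronwall variant from Thom\'ee's book, whereas you carry out the summation bookkeeping explicitly --- correctly noting that keeping the forcing's two-part structure (rather than the collapsed form $\tau t_k^{-1}\log(1+t_k/\tau)$) is what avoids an extra logarithmic factor --- so your write-up is a faithful expansion of, not a departure from, the paper's proof.
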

\begin{proof}
It follows from Lemma \ref{lem:I3m} that
\begin{align*}
 \| e_h^m  \|_{L^2(\Omega)} \le c\tau( t_m^{-1 } + \log (1+t_m/\tau))\|u_0\|_{L^2(\Omega)} + c \tau \sum_{k=1}^m \| e_h^k \|_{L^2(\Omega)}.
\end{align*}
The desired estimate follows from a variant of the discrete Gronwall's inequality \cite[p. 258]{Thomee:2006}.
\end{proof}

\begin{remark}\label{rmk:err-time}
The logarithmic factor $\log(1+t_m/\tau)$ is also present for the BE method for standard parabolic problems with
a time-dependent diffusion coefficient \cite[Theorem 2, p. 95]{LuskinRannacher:1982}.
For $u_0\in \dot H^\beta(\Omega)$, $\beta\in(0,2]$, it may be improved:
\begin{equation*}
  \|u_h^m-u_h(t_m)\|_{L^2(\Omega)} \leq c\tau t_m^{-(1-\beta\alpha/2)}\|u_0\|_{\dot H^\beta(\Omega)}.
\end{equation*}
In fact, the argument of Lemma \ref{lem:I3m} {\rm(}together with Theorem \ref{thm:reg-semi}{\rm(}i{\rm))}
implies
\begin{align*}
\|{\rm III}_{k,1}  \|_{L^2(\Omega)}
&\le c\int_{t_{k-1}}^{t_k} (t_m-s)^{-1} \int_s^{t_k} (t_m-\xi) \|  u_h'(\xi)\|_{L^2(\Omega)} \,\d\xi\,\d s\\
&\le c\int_{t_{k-1}}^{t_k} (t_m-s)^{-1} \int_s^{t_k} (t_m-\xi)\xi^{-(1-\alpha\beta/2)}\|u_0\|_{\dot H^\beta(\Omega)} \,\d\xi\,\d s\\
&\le c\tau \int_{t_{k-1}}^{t_k}\xi^{-(1-\beta\alpha/2)}\,\d\xi \| u_0\|_{\dot H^\beta(\Omega)},
\end{align*}
and
\begin{align*}
\|{\rm III}_{k,2}  \|_{L^2(\Omega)}  &= \int_{t_{k-1}}^{t_k} \| E_h(t_m-s;t_m)\| \int_s^{t_k} \|  A_h'(\xi)u_h(\xi)\|_{L^2(\Omega)} \,\d\xi\,\d s\\
&\le c\int_{t_{k-1}}^{t_k} (t_m-s)^{\alpha-1} \int_s^{t_k}\xi^{-(1-\beta/2)\alpha}  \,\d\xi\,\d s\| u_0\|_{\dot H^\beta(\Omega)}\\
& \le c\tau \int_{t_{k-1}}^{t_k} (t_m-s)^{\alpha-1}s^{-(1-\beta/2)\alpha}\,\d s \| u_0\|_{\dot H^\beta(\Omega)}.
\end{align*}
Thus, for any $\beta\in (0,2]$, there holds
\begin{align*}
\sum_{k=1}^m \bigg\| \int_{t_{k-1}}^{t_k} E(t_m-s;t_m)   (Q(t_k) -  Q(s))\,\d s \bigg\|  &\le  c\tau \| u_0\|_{\dot H^\beta(\Omega)}.
\end{align*}
Then the desired estimate follows by repeating the argument for Theorem \ref{thm:err-time1}.
\end{remark}

\subsection{Error estimate for the inhomogeneous problem}

Now we give the temporal discretization error for the inhomogeneous problem.

\begin{theorem}\label{thm:err-time2}
Under conditions \eqref{Cond-1}-\eqref{Cond-2}, $u_0=0$, $f\in C([0,T];L^2(\Omega))$  and
$\int_0^t(t-s)^{\alpha-1}\|f'(s)\|_{L^2(\Omega)}\d s<\infty$ for any $0<t\leq T$, there holds
\begin{equation*}
  \|u_h^m-u_h(t_m)\|_{L^2(\Omega)} \leq c\tau \Big(t_m^{-(1-\alpha)}\| f(0) \|_{L^2(\Omega)} + \int_0^{t_m} (t_m-s)^{\alpha-1} \| f'(s)  \|_{L^2(\Omega)} \,\d s\Big).
\end{equation*}
\end{theorem}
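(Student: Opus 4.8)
The plan is to mirror the proof of Theorem~\ref{thm:err-time1} and Lemma~\ref{lem:I3m}, replacing the homogeneous regularity of Theorem~\ref{thm:reg-semi}(i) by the inhomogeneous regularity of Theorem~\ref{thm:reg-semi}(ii). Starting from the splitting \eqref{eqn:fully-split}, $e_h^m=\sum_{i=1}^3{\rm I}_i^m$, we have ${\rm I}_1^m=0$ because $u_0=0$, while the bound on ${\rm I}_2^m$ recorded right after \eqref{eqn:fully-split} is already exactly of the asserted form. Thus it remains to establish the inhomogeneous analogue of Lemma~\ref{lem:I3m},
\[
\|{\rm I}_3^m\|_{L^2(\Omega)}\le c\tau\Big(t_m^{-(1-\alpha)}\|f(0)\|_{L^2(\Omega)}+\int_0^{t_m}(t_m-s)^{\alpha-1}\|f'(s)\|_{L^2(\Omega)}\,\d s\Big)+c\tau\sum_{k=1}^m\|e_h^k\|_{L^2(\Omega)},
\]
after which the conclusion follows by inserting the bound on ${\rm I}_2^m$ and applying the same variant of the discrete Gronwall inequality as in Theorem~\ref{thm:err-time1}. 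I would use the identical summand decomposition ${\rm I}_k+{\rm II}_k+{\rm III}_k$ with $Q(t)=(A_h(t_m)-A_h(t))u_h(t)$; the term ${\rm I}_k$ is unchanged and yields the Gronwall sum $c\tau\sum_k\|e_h^k\|$.

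The genuinely new difficulty, and what I expect to be the main obstacle, is the quadrature term ${\rm II}_k$. By Lemma~\ref{lem:est-Etau} with $\beta=0$ and Remark~\ref{rem:lemma-fem-1} one obtains $\|{\rm II}_k\|_{L^2(\Omega)}\le c\tau^2(t_m-t_k+\tau)^{\alpha-1}\|A_h(t_m)u_h(t_k)\|_{L^2(\Omega)}$, which involves the \emph{pointwise} value $\|A_h u_h(t_k)\|$ at a grid node. In contrast to the homogeneous case, Theorem~\ref{thm:reg-semi}(ii) supplies only a weighted integral bound on $\|A_h u_h\|$, not a pointwise one. My plan is therefore to first prove the pointwise estimate $\|A_h u_h(t)\|_{L^2(\Omega)}\le c(\|f(0)\|_{L^2(\Omega)}+\int_0^t\|f'(s)\|_{L^2(\Omega)}\,\d s)$ by the semidiscrete analogue of the integration-by-parts argument for the term~${\rm I}$ in Lemma~\ref{lem:u-int}, but \emph{without} the temporal weight: writing $A_h(t)u_h(t)={\rm I}+{\rm II}$ as there, the identity $A_hE_h=\tfrac{\d}{\d s}(I-F_h)$ gives $\|{\rm I}\|\le c\|f(0)\|+c\int_0^t\|f'(s)\|\,\d s$, the perturbation obeys $\|{\rm II}\|\le c\int_0^t\|A_h u_h(s)\|\,\d s$ by Remark~\ref{rem:lemma-fem-1}, and the ordinary Gronwall inequality closes it (the right-hand side is finite since $(t-s)^{\alpha-1}\ge T^{\alpha-1}$ on $(0,t)$ and by the maximal $L^p$-regularity in Theorem~\ref{thm:reg-semi}(ii)). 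With this bound $\|A_h u_h(t_k)\|$ is dominated by the nondecreasing quantity $B(t_m):=\|f(0)\|+\int_0^{t_m}\|f'\|\,\d s$, and since $\tau\sum_{k=1}^m(t_m-t_k+\tau)^{\alpha-1}\le ct_m^\alpha$ (no logarithmic factor appears, as $\alpha-1>-1$), we get $\sum_k\|{\rm II}_k\|\le c\tau t_m^\alpha B(t_m)$. This is absorbed into the target by repeatedly using $t_m\le T$ to trade one factor $t_m$ between the exponents, i.e. $t_m^\alpha\le Tt_m^{-(1-\alpha)}$ and $t_m^\alpha\int_0^{t_m}\|f'\|\,\d s\le T\int_0^{t_m}(t_m-s)^{\alpha-1}\|f'\|\,\d s$.

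For ${\rm III}_k$ with $k>1$ I would split $Q'=(A_h(t_m)-A_h(\cdot))u_h'-A_h'(\cdot)u_h$ exactly as in Lemma~\ref{lem:I3m}. The first piece is handled by Lemmas~\ref{lemma-fem-1} and~\ref{lem:smoothing2} and the elementary double-integral inequality of Lemma~\ref{lem:I3m}, reducing $\sum_{k>1}\|{\rm III}_{k,1}\|$ to $c\tau\int_\tau^{t_m}\|u_h'(\xi)\|\,\d\xi$; substituting the pointwise bound on $\|u_h'\|$ from Theorem~\ref{thm:reg-semi}(ii) and interchanging the order of integration produces the target, again via $\int_\tau^{t_m}\xi^{-(1-\alpha)}\d\xi\le ct_m^\alpha\le cTt_m^{-(1-\alpha)}$ and $(t_m-\eta)^\alpha\le T(t_m-\eta)^{\alpha-1}$. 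For the second piece, $\|A_h'(\xi)u_h(\xi)\|\le c\|A_h(t_m)u_h(\xi)\|$ (a consequence of \eqref{Cond-2}), so after swapping integration orders and using the monotonicity $(t_m-t_{k-1})^{\alpha-1}\le(t_m-s)^{\alpha-1}$ for $s\in[t_{k-1},t_k]$, the sum collapses into $c\tau\int_0^{t_m}(t_m-s)^{\alpha-1}\|A_h u_h(s)\|\,\d s$, bounded by the target through the integral estimate in Theorem~\ref{thm:reg-semi}(ii). Finally, the node $k=1$ term ${\rm III}_1$ must be treated separately because $Q'$ is not integrable down to $0$; there I would use $\|u_h(s)\|\le cs^\alpha\|f(0)\|+c\int_0^s(s-\zeta)^\alpha\|f'(\zeta)\|\,\d\zeta$ (from integrating the $\|u_h'\|$ bound) together with $\|A_h(t_m)E_h(t_m-s;t_m)\|\le c(t_m-s)^{-1}$ and $\|A_h(t_m)^{-1}Q(t)\|\le c|t_m-t|\|u_h(t)\|$ from Lemma~\ref{lemma-fem-1}, yielding an $O(\tau t_m^{-(1-\alpha)})$ contribution after the same $t_m\le T$ bookkeeping. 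Collecting all terms gives the displayed bound on ${\rm I}_3^m$ and completes the proof.
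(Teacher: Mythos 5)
Your proposal is correct, and its skeleton --- the splitting \eqref{eqn:fully-split} with ${\rm I}_1^m=0$, the reuse of the stated bound on ${\rm I}_2^m$, the decomposition of ${\rm I}_3^m$ into ${\rm I}_k+{\rm II}_k+{\rm III}_k$ with $Q(t)=(A_h(t_m)-A_h(t))u_h(t)$, and the closing discrete Gronwall step --- coincides with the paper's proof. The genuine difference is precisely the term you single out as the crux, ${\rm II}_k$. You take $\beta=0$ in Lemma \ref{lem:est-Etau}, which forces you to control the pointwise quantity $\|A_h(t_m)u_h(t_k)\|_{L^2(\Omega)}$; since Theorem \ref{thm:reg-semi}(ii) supplies only a weighted integral of $\|A_hu_h\|_{L^2(\Omega)}$, you prove a new discrete stability estimate $\|A_hu_h(t)\|_{L^2(\Omega)}\le c\big(\|f(0)\|_{L^2(\Omega)}+\int_0^t\|f'(s)\|_{L^2(\Omega)}\,\d s\big)$ via the identity $A_hE_h=\tfrac{\d}{\d s}(I-F_h)$, integration by parts, Remark \ref{rem:lemma-fem-1}, and Gronwall; this is sound (for fixed $h$ the map $t\mapsto\|A_h(t)u_h(t)\|_{L^2(\Omega)}$ is continuous on the finite-dimensional space $S_h$, so Gronwall applies, and all constants are $h$-independent), and the subsequent bookkeeping $\tau\sum_{k=1}^m(t_m-t_k+\tau)^{\alpha-1}\le ct_m^\alpha$ together with $t_m\le T$ is correct. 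The paper instead takes $\beta=1/2$ in Lemma \ref{lem:est-Etau}, so that only $\|A_h^{1/2}(t_m)u_h(t_k)\|_{L^2(\Omega)}\simeq\|u_h(t_k)\|_{H^1(\Omega)}$ is needed, and this pointwise bound is already available from Theorem \ref{thm:reg-semi}(iii); the sum $\tau^2\sum_{k=1}^m(t_m-t_k+\tau)^{\alpha/2-1}\le c\tau$ then closes the estimate with no new lemma. So the paper's route is more economical given the toolkit it has built, while yours buys a stronger auxiliary result (pointwise-in-time $H^2$-stability of the semidiscrete solution under the stated assumptions on $f$) that is of independent interest. Your treatment of ${\rm III}_k$ matches the paper's; note only that your separate handling of $k=1$ is unnecessary here: unlike the homogeneous case, $\|u_h'(\xi)\|_{L^2(\Omega)}\lesssim\xi^{\alpha-1}\|f(0)\|_{L^2(\Omega)}+\dots$ is integrable at $\xi=0$, so the paper sums the bounds for ${\rm III}_{k,1}$ and ${\rm III}_{k,2}$ uniformly over all $k\ge1$ (your extra argument for $k=1$ is valid, just not needed). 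A small citation slip: for $\|A_h(t_m)E_h(t_m-s;t_m)\|\le c(t_m-s)^{-1}$ in ${\rm III}_{k,1}$ the relevant reference is the discrete analogue of Lemma \ref{lem:smoothing}(ii), not Lemma \ref{lem:smoothing2}, which concerns the fully discrete operator $E_{\tau,m}^n$.
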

\begin{proof}
The argument is similar to Theorem \ref{thm:err-time1}, and thus we only sketch the main steps.
It suffices to bound the terms ${\rm II}_k$ and ${\rm III}_k$ in Lemma \ref{lem:I3m}.
By Theorem \ref{thm:reg-semi}(iii),
Remark \ref{rem:lemma-fem-1} and Lemma \ref{lem:est-Etau} (with $\beta=1/2$), the following estimate holds:
\begin{align*}
\sum_{k=1}^{m} \|{\rm II}_k  \|_{L^2(\Omega)}
& \le c\tau^2 \sum_{k=1}^{m}  (t_m-t_{k}+\tau)^{\alpha/2-1} \|A_h^{1/2}(t_m)u_h( t_k) \|_{L^2(\Omega)}\\
   & \leq c\tau^2 \sum_{k=1}^{m}  (t_m-t_{k}+\tau)^{\alpha/2-1}  \|  f \|_{L^\infty(0,t_m;L^2(\Omega))}
   \leq c\tau\|f\|_{L^\infty(0,t_m;L^2(\Omega))}.
\end{align*}
Meanwhile, upon noting $t_m\leq T$, we have
\begin{align*}
\|f\|_{L^\infty(0,t_m;L^2(\Omega))} &\le \| f(0) \|_{L^2(\Omega)} + \int_0^{t_m} \| f'(s)\|_{L^2(\Omega)} \,\d s\\
&\le \| f(0) \|_{L^2(\Omega)} + c_T\int_0^{t_m} (t_m-s)^{\alpha-1}\| f'(s)\|_{L^2(\Omega)} \,\d s.
\end{align*}
Next, the two terms ${\rm III}_{k,1}$ and ${\rm III}_{k,2}$ can be bounded respectively by
\begin{align*}
\|{\rm III}_{k,1}  \|_{L^2(\Omega)}
&\le c\int_{t_{k-1}}^{t_k} (t_m-s)^{-1} \int_s^{t_k} (t_m-\xi)\|u_h'(\xi)\|_{L^2(\Omega)} \,\d\xi\,\d s\\
&= c\int_{t_{k-1}}^{t_k} (t_m-\xi) \|u_h'(\xi)\|_{L^2(\Omega)} \int_{t_{k-1}}^\xi(t_m-s)^{-1} \,\d s\,\d\xi\\
&\le c\tau\int_{t_{k-1}}^{t_k}\|u_h'(\xi)\|_{L^2(\Omega)}\d\xi,
\end{align*}
and
\begin{align*}
\|{\rm III}_{k,2}  \|_{L^2(\Omega)}  &\leq \int_{t_{k-1}}^{t_k} \| E_h(t_m-s;t_m)\| \int_s^{t_k} \|A_h'(\xi)u_h(\xi)\|_{L^2(\Omega)} \,\d\xi\,\d s\\
&\leq c\int_{t_{k-1}}^{t_k} (t_m-s)^{\alpha-1} \int_s^{t_k} \|A_h'(\xi)u_h(\xi)\|_{L^2(\Omega)}  \,\d\xi\,\d s\\
& =  c\int_{t_{k-1}}^{t_k} \|  A_h'(\xi)u_h(\xi)\|_{L^2(\Omega)} \int_{t_{k-1}}^\xi(t_m-s)^{\alpha-1} \,\d s \d \xi\\
& \le c\tau \int_{t_{k-1}}^{t_k} (t_m-\xi)^{\alpha-1}\|A_h'(\xi)u_h(\xi)\|_{L^2(\Omega)}\d\xi.
\end{align*}
Then by Theorem \ref{thm:reg-semi}(ii), we have
\begin{align*}
  \sum_{k=1}^m\|{\rm III}_{k,1}  \|_{L^2(\Omega)} &\le c\tau\int_{0}^{t_m}\|u_h'(s)\|_{L^2(\Omega)}\d s\\
    & \leq c\tau\Big(\int_0^{t_m}\|f(0)\|_{L^2(\Omega)}\d s+\int_0^{t_m}\int_0^s(s-\xi)^{\alpha-1}\|f'(\xi)\|_{L^2(\Omega)}\d \xi\d s\Big)\\
    &  = c\tau\Big(\int_0^{t_m}\|f(0)\|_{L^2(\Omega)}\d s+\int_0^{t_m}\|f'(\xi)\|_{L^2(\Omega)}\int_\xi^{t_m}(s-\xi)^{\alpha-1}\d s\d \xi\Big)\\
    & \leq c\tau\Big(\|f(0)\|_{L^2(\Omega)}+\int_0^{t_m}(t_m-\xi)^{\alpha-1}\|f'(\xi)\|_{L^2(\Omega)}\d \xi\Big),
\end{align*}
and similarly, by Theorem \ref{thm:reg-semi}(ii), we deduce
\begin{align*}
  \sum_{k=1}^m\|{\rm III}_{k,2}  \|_{L^2(\Omega)} & \le c\tau \int_{0}^{t_m} (t_m-s)^{\alpha-1}\|A_h'(s)u_h(s)\|_{L^2(\Omega)}\d s\\
   & \leq c\tau \Big(\|f(0)\|_{L^2(\Omega)}+\int_0^{t_m}(t_m-s)^{\alpha-1}\|f'(s)\|_{L^2(\Omega)}\d s\Big).
\end{align*}
These estimates together with discrete Gronwall's inequality complete the proof.
\end{proof}

\begin{remark}\label{rmk:L1}
The proof techniques in this section apply to other first-order methods, e.g.,
L1 scheme \cite{LinXu:2007}, and similar error estimates can also be derived for these methods.
\end{remark}

\begin{remark}
We briefly comment on the dependence of the constant $c$ in error estimates on the fractional order $\alpha$.
At a few occasions, it can blow up as $\alpha\to 1^-$; see e.g., ${\rm I}_3(t_0)$ in Lemma \ref{lem:I3},
${\rm I}_{4,2}(t_0)$ in Lemma \ref{lem:I4} and ${\rm III}_{k,2}$ in Lemma \ref{lem:I3m}. This phenomenon does
not fully agree with the results for the continuous model. Such a blowup phenomenon appears also in some
existing error analysis; see, e.g., \cite[eq. (2.2)]{Mustapha:2017} and \cite[Lemma 4.3]{StynesORiordanGracia:2017},
and it is of interest to further refine the estimates to fill in the gap.
\end{remark}

\section{Numerical results}\label{sec:numerics}
Now we present numerical examples to verify the theoretical results in Sections \ref{sec:semi} and
\ref{sec:fully}. We consider problem \eqref{eqn:pde} with a time-dependent elliptic operator $A(t)=-(2+
\cos(t))\Delta$ on the domain $\Omega=(0,1)$ and the following two sets of problem data:
\begin{itemize}
\item[(a)] $u_0(x)=x^{-1/4}\in H^{1/4-\epsilon}(\Omega)$ with $\epsilon\in(0,1/4)$ and $f\equiv0$.
\item[(b)] $u_0(x)=0$  and $f=e^{t}(1+\chi_{(0,1/2)}(x))$.
\end{itemize}
Unless otherwise specified, the final time $T$ is fixed at $T=1$.

We divide the domain $\Omega$ into $M$ subintervals of equal length $h=1/M$.
The numerical solutions are computed using the Galerkin FEM in space, and the backward Euler (BE)
CQ or L1 scheme in time. To evaluate the convergence, we compute the spatial error $e_s$ and
temporal error $e_t$, respectively, defined by
\begin{equation*}
 e_s(t_N) =  \|u_h(t_N)-u(t_N)\|_{L^2(\Omega)}
 \quad \mbox{and}\quad
  e_t(t_N) =  \|u_h^N-u_h(t_N)\|_{L^2(\Omega)}.
\end{equation*}
Since the exact solution is unavailable, we compute reference solutions on a finer mesh: for
the error $e_s$, we take the time step $\tau=1/10000$ and mesh size $h=1/1280$, and for the
error $e_t$, take $h=1/100$ and $\tau=1/10000$, unless otherwise specified.

First we examine the spatial convergence of the semidiscrete Galerkin scheme \eqref{eqn:fem2}.
The spatial errors for case (a) are shown in Table \ref{tab:a-space}, which indicates a steady
$O(h^2)$ rate for the semidiscrete scheme \eqref{eqn:fem2}, just as predicted by Theorem 
\ref{thm:err-space1}. The $O(h^2)$ rate holds for all three fractional orders and different 
terminal times. Since the initial data is nonsmooth, the spatial error $e_s(t_N)$ decreases with 
the time $t_N$, which is in good agreement with the regularity result in Theorem \ref{thm:reg-space2}. 
To further illuminate the precise dependence of the spatial error $e_s(t_N)$ on $t_N$, 
in Table \ref{tab:a-space-sing}, we present the error $e_s$ as the time $t_N\to0$ for case (a). 
By repeating the argument for Theorem \ref{thm:err-space1}, there holds $e_s(t_N) \leq ct_N^{-(2-\beta)
\alpha/2}h^2\|v\|_{\dot H^\beta(\Omega)}$, $0\leq \beta\le 2$. For case (a), this estimate predicts 
an exponent $7\alpha/8$ for the dependence on the time $t_N$, which gives the numbers shown in the 
bracket in Table \ref{tab:a-space-sing}. Table \ref{tab:a-space-sing} indicates that the empirical rate agrees
excellently with the predicted one, fully confirming the analysis. Similar observations hold also 
for the numerical results for the inhomogeneous problem in case (b), cf. Table \ref{tab:b-space}. 
These results fully support the error analysis of the semidiscrete scheme in Section \ref{sec:semi}.

\begin{table}[hbt!]
\caption{Spatial errors $e_s$ for example (a) with $\tau=1/10000$ and $h=1/M$.}\label{tab:a-space}
\vspace{-.3cm}{\setlength{\tabcolsep}{7pt}
	\centering
	\begin{tabular}{|c|l|ccccc|c|}
		\hline
		$T$ &\diagbox[width=3.5em]{$\alpha$}{$N$}  &$10$ &$20$ & $40$ & $80$ & $160$ &rate \\
		\hline
		     &   $0.25$      & 1.44e-5 & 3.62e-6 & 9.06e-7 & 2.26e-7 & 5.62e-8 & 2.00 (2.00)\\
		$1$    &   $0.50$      & 1.02e-5 & 2.56e-6 & 6.40e-7 & 1.60e-7 & 3.97e-8 & 2.00 (2.00)\\
		     &   $0.75$      & 5.18e-6 & 1.30e-6 & 3.25e-7 & 8.12e-8 & 2.02e-8 & 2.00 (2.00)\\
    \hline
    		     &   $0.25$      & 6.26e-5 & 1.57e-5 & 3.93e-6 & 9.80e-7 & 2.44e-7 & 2.01 (2.00)\\
		$10^{-3}$    &   $0.50$      & 2.12e-4 & 5.31e-5 & 1.33e-5 & 3.32e-6 & 8.24e-7 & 2.01 (2.00)\\
		         &   $0.75$      & 5.99e-4 & 1.50e-4 & 3.75e-5 & 9.36e-6 & 2.33e-6 & 2.01 (2.00)\\
    \hline
   \end{tabular}}
\end{table}

\begin{table}[hbt!]
\caption{Spatial errors $e_s$ for example (a) with $h=1/200$ and $N=10000$, at $T=10^{-k}$.}\label{tab:a-space-sing}
\vspace{-.3cm}{\setlength{\tabcolsep}{7pt}
	\centering
	\begin{tabular}{ |l|cccccc|c|}
		\hline
		  \diagbox[width=3.5em]{$\alpha$}{$k$}  & $2$ & $3$ &$4$ &$5$ &$6$ &$7$  &rate \\
		\hline
	0.25    &  2.40e-6 & 4.04e-6 & 6.58e-6 & 1.05e-5 & 1.65e-5 & 2.66e-5    & 0.21 (0.22)\\
	0.5    & 5.28e-6  & 1.31e-5 & 3.36e-5 & 9.02e-5 & 2.40e-4 & 6.40e-4    & 0.43 (0.44)\\
	0.75       & 9.95e-6 & 3.90e-5 & 1.70e-4 & 7.45e-4 & 3.26e-3  & 1.39e-2    & 0.64 (0.66)\\
	\hline
   \end{tabular}}
\end{table}

\begin{table}[hbt!]
\caption{Spatial errors $e_s$ for example (b) at $T=1$ with $\tau=1/10000$ and $h=1/M$.}\label{tab:b-space}
\vspace{-.3cm}{\setlength{\tabcolsep}{7pt}
	\centering
	\begin{tabular}{ |l|ccccc|c|}
		\hline
		   \diagbox[width=3.5em]{$\alpha$}{$M$}  &$10$ &$20$ & $40$ & $80$ & $160$ &rate \\
		\hline
		         $0.25$      & 2.03e-4 & 5.06e-5 & 1.27e-5 & 3.16e-6 & 7.85e-7 & 2.01 (2.00)\\
		      $0.50$      & 2.08e-4 & 5.19e-5 & 1.30e-5 & 3.24e-6 & 8.04e-7 & 2.01 (2.00)\\
		         $0.75$      & 2.13e-4 & 5.32e-5 & 1.33e-5 & 3.32e-6 & 8.25e-7 & 2.01 (2.00)\\
    \hline
   \end{tabular}}
\end{table}

Next we turn to the temporal convergence, and present numerical results for both BE and L1 schemes, cf. Remark
\ref{rmk:L1}. The temporal errors $e_t$ for case (a) at two time instances are given in Table \ref{tab:a-time},
which indicate an $O(\tau)$ convergence rate for both time stepping schemes. Further, the accuracy of both
schemes is largely comparable. The convergence is very steady for both schemes, and the convergence rate is 
independent of the fractional order $\alpha$ and the final time $t_N$ (so long as it is fixed). Further, it is 
observed that the error $e_t$ decreases with the time $t_N$. To show the dependence of the temporal error $e_t
(t_N)$ with the time $t_N$, in Table \ref{tab:a-time-sing}, we present $e_t(t_N)$ as the time $t_N$ tends to 
zero. In view of Remark \ref{rmk:err-time}, there holds $e_t(t_N)\leq c\tau t_N^{-(1-\beta\alpha/2)}\|u_0\|_{
\dot H^\beta(\Omega)}$, $0<\beta\leq 2$. This estimate predicts a decay $O(N^{-\alpha/8})$  for case (a), which agrees excellently with
the empirical rate (in the bracket) in Table \ref{tab:a-time-sing}, thereby confirming the sharpness of the
error estimate. These observations hold also for the inhomogeneous problem in case (b), cf. Table \ref{tab:b-time}.
These numerical results fully support the error analysis of the fully discrete scheme in Section \ref{sec:fully}.

\begin{table}[hbt!]
\caption{Temporal errors $e_t$ for example (a) with $h=1/100$ and $\tau =T/N$.}\label{tab:a-time}
\vspace{-.3cm}{\setlength{\tabcolsep}{7pt}
	\centering
	\begin{tabular}{|c|c|l|ccccc|c|}
		\hline
		$T$&method &\diagbox[width=3.5em]{$\alpha$}{$N$} &$100$ &$200$ & $400$ & $800$ & $1600$  &rate \\
		\hline
		&     &   $0.25$    & 5.43e-5 & 2.71e-5 & 1.35e-5 & 6.76e-6 & 3.38e-6    & 1.00 (1.00)\\
		&BE   &   $0.50$      & 9.49e-5 & 4.73e-5 & 2.36e-5 & 1.18e-5 & 5.90e-6   & 1.00 (1.00)\\
		 $1$ &    &   $0.75$     & 9.01e-5 & 4.49e-5 & 2.24e-5 & 1.12e-5 & 5.59e-6     & 1.00 (1.00)\\
    \cline{2-9}
		&     &   $0.25$      & 4.35e-5 & 2.17e-5 & 1.08e-5 & 5.41e-6 & 2.70e-6   & 1.00 (1.00)\\
		&L1   &   $0.50$      & 6.33e-5 & 3.15e-5 & 1.57e-5 & 7.84e-6 & 3.92e-6   & 1.00 (1.00)\\
		&     &   $0.75$     & 5.12e-5 & 2.54e-5 & 1.26e-5 & 6.29e-6 & 3.14e-6   & 1.01 (1.00)\\
	\hline
	        &     &   $0.25$      & 2.00e-4 & 9.99e-5 & 4.99e-5 & 2.49e-5 & 1.25e-5   & 1.00 (1.00)\\
		&BE   &   $0.50$      & 8.16e-4 & 4.08e-4 & 2.04e-4 & 1.02e-4 & 5.10e-5   & 1.00 (1.00)\\
		 $10^{-3}$ &    &   $0.75$      & 7.58e-4 & 3.79e-4 & 1.89e-4 & 9.46e-5 & 4.73e-5   & 1.00 (1.00)\\
    \cline{2-9}
		&     &   $0.25$      & 1.69e-4 & 8.43e-5 & 4.21e-5 & 2.10e-5 & 1.05e-5  & 1.00 (1.00)\\
		&L1   &   $0.50$     & 8.08e-4 & 3.99e-4 & 1.98e-4 & 9.84e-5 & 4.90e-5    & 1.01 (1.00)\\
		&     &   $0.75$       & 8.28e-4 & 4.11e-4 & 2.04e-4 & 1.02e-4 & 5.07e-5   & 1.01 (1.00)\\
	\hline
   \end{tabular}}
\end{table}

\begin{table}[hbt!]
\caption{Temporal errors $e_t$ for example (a) with $\alpha=0.5$, $h=10^{-3}$ and $N=5$, at $T=10^{-k}$.}\label{tab:a-time-sing}
\vspace{-.3cm}{\setlength{\tabcolsep}{7pt}
	\centering
	\begin{tabular}{|l|c|cccccc|c|}
		\hline
		  \diagbox[width=3.5em]{$\alpha$}{$k$}
		  &  & $3$ & $4$ &$5$ &$6$ &$7$ &$8$  &rate \\
		\hline
	0.5 &	BE    & 1.65e-2  & 1.06e-2 & 8.79e-3 & 7.45e-3 & 6.33e-3 & 5.39e-3    & 0.07 (0.06)\\
		 &   L1     & 1.91e-2 & 1.41e-2 & 1.06e-2 & 8.95e-3  & 7.59e-3  & 6.46e-3   & 0.07 (0.06)\\
	\hline
	0.8 &	BE      & 1.61e-2 & 1.23e-2 & 9.52e-3 & 7.44e-3 & 5.84e-3  & 4.56e-3    & 0.11 (0.10)\\
		&    L1     & 1.83e-2 & 1.40e-2 & 1.08e-2 & 8.46e-3  & 6.64e-3  & 5.19e-3  & 0.11 (0.10)\\
	\hline
   \end{tabular}}
\end{table}

\begin{table}[hbt!]
\caption{Temporal errors $e_t$ for example (b) at $T=1$ with $h=1/100$ and $\tau=T/N$.}\label{tab:b-time}
\vspace{-.3cm}{\setlength{\tabcolsep}{7pt}
	\centering
	\begin{tabular}{|c|l|ccccc|c|}
		\hline
		method &\diagbox[width=3.5em]{$\alpha$}{$N$} &$100$ &$200$ & $400$ & $800$ & $1600$  &rate \\
		\hline
		    &   $0.25$      & 3.26e-6 & 1.63e-6 & 8.15e-7 & 4.07e-7 & 2.04e-7   & 1.00 (1.00)\\
		BE   &   $0.50$      & 4.76e-6 & 2.37e-6 & 1.18e-6 & 5.92e-7 & 2.96e-7   & 1.00 (1.00)\\
		    &   $0.75$      & 2.76e-6 & 1.37e-6 & 6.84e-7 & 3.41e-7 & 1.71e-7   & 1.00 (1.00)\\
\hline
		    &   $0.25$      & 2.33e-6 & 1.17e-6 & 5.85e-7 & 2.93e-7 & 1.47e-7   & 1.00 (1.00)\\
		L1   &   $0.50$     & 3.25e-6 & 1.64e-6 & 8.29e-7 & 4.18e-7 & 2.10e-7   & 0.99 (1.00)\\
		    &   $0.75$      & 1.85e-6 & 9.89e-7 & 5.22e-7 & 2.72e-7 & 1.41e-7   & 0.94 (1.00)\\
	\hline
   \end{tabular}}
\end{table}

\section*{Acknowledgements}
The research of B. Li is partially supported by a Hong Kong RGC grant (Project No. 15300817), and that of  Z. Zhou by a start-up grant from
the Hong Kong Polytechnic University and  Hong Kong RGC grant No. 25300818.
\bibliographystyle{abbrv}
\bibliography{frac}
\end{document}